\def\printname#1{
        \if\draft y
                \smash{\makebox[0pt]{\hspace{-0.5in}
                        \raisebox{8pt}{\tt\tiny #1}}}
        \fi}
\def\lbl#1{\label{#1}\printname{#1}}
                        \theoremstyle{plain}
 \newcommand{\no}[1]{}
\newtheorem{theorem}{Theorem}[section]
\newtheorem{thm}{Theorem}
\newtheorem{lemma}[theorem]{Lemma}
\newtheorem{cor}[theorem]{Corollary}
\newtheorem{prop}[theorem]{Proposition}
\newtheorem{conjecture}{Conjecture}
\theoremstyle{definition}
\newtheorem{remark}[theorem]{Remark}
\newcommand{\lcr}{\raisebox{-5pt}{\mbox{}\hspace{1pt}
                  \includegraphics{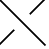}\hspace{1pt}\mbox{}}}
\newcommand{\ift}{\raisebox{-5pt}{\mbox{}\hspace{1pt}
                  \includegraphics{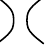}\hspace{1pt}\mbox{}}}
\newcommand{\zer}{\raisebox{-5pt}{\mbox{}\hspace{1pt}
    \includegraphics{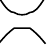}\hspace{1pt}\mbox{}}}
\newcommand{\cC}{\mathcal{C}}
\newcommand{\bm}{\mathbf{m}}
  \def\tKF{\tilde{K}_{\zeta}(F)}
  \def\tZF{\tilde{Z}_{\zeta}(F)}
\def\cD{\mathcal D}
\def\BC{\mathbb C}
\def\BN{\mathbb N}
\def\BZ{\mathbb Z}
\def\BR{\mathbb R}
\def\BQ{\mathbb Q}
\def\cA{\mathcal A}
\def\cB{\mathcal B}
\def\CP{\mathcal P}
\def\Cz{\cC_\zeta}
\def\Sz{\cS_\zeta}
\def\rk{\mathrm{rk}}
\def\cS{\mathscr S}
\def\ot{\otimes}
\def\oS{\mathring{\Sigma}}
\def\bU{{\overline U}}
\def\bn{\mathbf n}
\def\cP{\mathcal P}
\def\ev{{\mathrm{ev}}}
\def\embed{\hookrightarrow}
\newcommand{\al}{\alpha}
\newcommand{\KF}{K_\zeta(F)}
\newcommand{\ZF}{Z_\zeta(F)}
 \def\bm{\mathbf{m}}
 \def\cC{\mathcal C}
\def\tr{\mathrm{tr}}
\def\Tr{\mathrm{TR}}
\def\al{\alpha}
\def\be { \begin{equation} }
\def\ee { \end{equation} }
\def\bt{ {\mathbf t}}
\def\nc{\newcommand}
                  \nc\FI[2]{\begin{figure}
    \begin{center}\input{#1.pstex_t}\end{center}
    \caption{#2}
    \lbl{#1}
  \end{figure}}
\nc\FIG[3]{\begin{figure}
    \includegraphics[#3]{#1.eps}
    \caption{#2}
    \lbl{fig:#1}
    \end{figure}}
\nc\FF[3]{\begin{figure}
    \includegraphics[#3]{#1.eps}
    \caption{#2}
    \lbl{#1}
    \end{figure}}
    \nc\FIGc[3]{\begin{figure}[htpb]
    \includegraphics[height=#3]{#1.eps}
    \caption{#2}
    \label{fig:#1}
    \end{figure}}
    \nc\FIGh[3]{\begin{figure}[htpb]
    \includegraphics[height=#3]{#1.eps}
    \caption{#2}
    \lbl{fig:#1}
    \end{figure}}
\newcommand{\blue}[1]{{\color{blue}[[  #1 ]]}}
\def\ord{\mathrm{ord}}
\def\tA{\tilde A}
\def\tZ{\tilde Z}
 \def\bF{\bar F}
     \def\cV{\mathcal V}
     \def\tr{\mathrm{tr}}
\def\supp{\mathrm{supp}}
\def\cQ{{\mathcal Q}}
\title[Skein Algebra ]{Dimension and Trace of the Kauffman Bracket Skein Algebra}
\author[C. Frohman]{Charles Frohman }
\address{ Department of Mathematics, The University of Iowa}
\email{charles-frohman@uiowa.edu}
\author[J. Kania-Bartoszynska] {Joanna Kania-Bartoszynska}
\address{ Division of Mathematical Sciences, The National Science Foundation}
\email{jkaniaba@nsf.gov}
\author[T. L\^e]{ Thang L\^{e}}
\address{ Department of Mathematics, Georgia Tech}
\email{letu@math.gatech.edu}
\begin{document}

\begin{abstract} Let $F$ be a finite type surface and $\zeta$  a complex root of unity.  The Kauffman bracket skein algebra $\KF$ is an important object in both classical and quantum topology as it has relations to the character variety, the Teichm\"uller space, the Jones polynomial, and the Witten-Reshetikhin-Turaev Topological Quantum Field Theories. We compute the rank and trace of $\KF$ over its center, and we extend a theorem of \cite{FK} which says the skein algebra has a splitting coming from two pants decompositions of $F$.  \end{abstract}

\maketitle

\section{Introduction}\label{Intro}   Let $F$ be a finite type surface and $\zeta$  a complex root of unity.  The Kauffman bracket skein algebra $\KF$ is an important object in both classical and quantum topology as it has relations to the character variety, the Teichm\"uller space, the Jones polynomial, and the Witten-Reshetikhin-Turaev Topological Quantum Field Theories. We recall the definition of $\KF$ in Section \ref{prel}.

The linear representations of $\KF$ play an important role in hyperbolic Topological Quantum Field Theories. In \cite{FKL} we prove the Unicity Conjecture of Bonahon and Wong \cite{BW4} which among other things states that generically all irreducible representations of $\KF$ have the same dimension equal to the square root of the dimension of $\KF$ over its center $\ZF$. Here if $A$ is an algebra whose center is a domain $Z$ then the dimension of $A$ over $Z$, denoted by $\dim_ZA$, is defined to be the dimension of the vector space $A \ot_Z \tZ$ over the field of fractions $\tZ$ of $Z$. The calculation of the dimension $\dim_\ZF \KF$ is one of the main result of this paper.
\begin{thm}
 [ See Theorem \ref{thm.dim}]  \label{thm.1}
 Suppose F is a finite type surface of genus $g$ with $p$ punctures and negative Euler characteristic, and $\zeta$ is a root of unity of order $\ord(\zeta)$. Let $m$ be the order $\zeta^4$, then
\[\dim_{\ZF} \KF=  \begin{cases}
m^{6g-6 + 2p} \qquad &\text{if }\  \ord (\zeta) \not \equiv 0 \pmod 4  \\  2^{2g} \, m^{6g-6 + 2p} \qquad &\text{if } \ \ord (\zeta)  \equiv 0 \pmod 4.\end{cases}\]
\end{thm}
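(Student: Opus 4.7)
The plan is to compute $\dim_{\ZF}\KF$ by degenerating $\KF$ to a quantum torus through a pants decomposition, and then applying the standard dimension formula for a quantum torus over its center. Since $\chi(F)<0$, fix a pants decomposition $\cP=\{\alpha_1,\dots,\alpha_N\}$ with $N=3g-3+p$ interior curves. The Dehn--Thurston parameterization assigns to each simple multicurve $\gamma$ on $F$ a pair $(\bk,\bt)\in\BZ_{\ge 0}^N\times\BZ^N$ of intersection numbers and twist coordinates relative to $\cP$, and the corresponding multicurves form a linear basis of $\KF$. A lexicographic ordering on these coordinates produces a filtration whose associated graded $\gr\KF$ is a quantum torus $\cT$ on $2N=6g-6+2p$ non-central generators (one $X_i$ tracking intersection with $\alpha_i$, one $Y_i$ tracking twist around $\alpha_i$), with skew-symmetric commutation matrix $C$ dictated by the pants-intersection pairing and the Kauffman bracket; the $p$ peripheral curves contribute additional central generators.

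Second, I would compute $\dim_{Z(\cT)}\cT$ by a lattice-index calculation. For a quantum torus at $q$ of order $n$ with integer skew matrix $C$, the center is spanned by the monomials $X^{\lambda}$ for $\lambda$ in the sublattice $L'=\{\lambda\in\BZ^{2N}:C\lambda\in n\BZ^{2N}\}$, and $\cT$ is free over its center with rank $[\BZ^{2N}:L']$. The commutation exponents coming from the Kauffman bracket are multiples of $4$, so after extracting the common factor the operative order becomes $m=\ord(\zeta^4)$. When $\ord(\zeta)\not\equiv 0\pmod 4$, the reduced commutation matrix mod $m$ has full rank $2N$, giving $[\BZ^{2N}:L']=m^{2N}=m^{6g-6+2p}$. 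The filtration then transfers this count to $\KF$, using that both $\KF$ and $\gr\KF$ are affine Noetherian domains of the same Gelfand--Kirillov dimension.

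Third, when $\ord(\zeta)\equiv 0\pmod 4$ the effective root of unity entering the commutations is $\zeta^2$ (of order $2m$) rather than $\zeta^4$ (of order $m$), because certain exponents are divisible by $2$ but not by $4$. The rank of the commutation matrix mod $m$ then drops by exactly $2g$, with the additional kernel corresponding to a copy of $H_1(\bar F;\BZ/2)\cong(\BZ/2)^{2g}$ (here $\bar F$ is the closed surface obtained by filling in the punctures; peripheral classes are already central and do not contribute to the extra factor). This yields the $2^{2g}$ additional central monomials predicted by the formula, giving $2^{2g}m^{6g-6+2p}$.

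The main obstacle is verifying the equality $\gr\ZF=Z(\gr\KF)=Z(\cT)$, so that the index computation in $\cT$ actually gives $\dim_{\ZF}\KF$ rather than only an upper bound. The inclusion $\gr\ZF\subseteq Z(\cT)$ is automatic, but the reverse requires producing, for each central leading term in $\cT$, an explicit central element of $\KF$ realizing it. I would achieve this using the Chebyshev--Frobenius homomorphism of Bonahon--Wong, whose image lies in $\ZF$ with computable leading terms on pants curves, together with the refined splitting from two transverse pants decompositions promised in the abstract, which provides a concrete product basis of $\KF$ over the Chebyshev subalgebra and makes the leading-term matching transparent. The $\ord(\zeta)\equiv 0\pmod 4$ case is most delicate: the extra $2^{2g}$ central elements must be constructed by hand, most likely as products involving square-root Chebyshev polynomials or spin-like loop classes on $\bar F$.
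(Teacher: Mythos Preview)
Your strategy is a plausible one, but it differs substantially from the paper's and the obstacle you identify is precisely the one the paper avoids rather than solves.

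The paper does \emph{not} pass to an associated graded quantum torus and then try to prove $\gr\ZF=Z(\gr\KF)$. Instead it separates the two inequalities. For the lower bound, it builds a degree map $\deg:\KF^*\to\bar\cA$ into a free abelian group and shows that the induced map $\deg_\zeta:\KF^*\to\bar\cA/\bar\cA_\zeta=:\Rz$ is a surjective monoid homomorphism killing $\ZF^*$; elements with distinct $\deg_\zeta$ are then $\ZF$-independent, so $\dim_\ZF\KF\ge|\Rz|=D_\zeta(F)$. The index $|\Rz|$ is computed directly from the description of $\cS_\zeta$ (Propositions~3.10 and~3.13), not from a commutation matrix. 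For the upper bound, the paper uses a filtration $F_k$ by total intersection number and the elementary estimate $\dim_\ZF\KF\le \dim_\BC F_k(\KF)/\dim_\BC F_{k-u}(\ZF)$; both numerator and denominator are counts of lattice points of $\bar\cA$ and $\bar\cA_\zeta$ in a dilated polytope, and the ratio tends to $[\bar\cA:\bar\cA_\zeta]=D_\zeta(F)$. No equality $\gr\ZF=Z(\gr\KF)$ is ever needed, only the trivial inclusion.

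Your proposal also runs into a genuine technical problem for closed surfaces: the associated graded is \emph{not} a quantum torus on the full Dehn--Thurston monoid. The product formula $S(\bn)S(\bn')=\zeta^j S(\bn+\bn')+\text{lower}$ holds only for \emph{triangular} diagrams, and the paper spends all of Section~4 developing ``stable'' DT coordinates (via iterated Dehn twisting along $\partial N(\cD)$) to manufacture a degree map that behaves well on all of $\cS$. Finally, your proposed remedy for the obstacle --- invoking the two-pants splitting to lift central leading terms --- is circular here: in this paper the splitting (Theorem~9.1) is deduced \emph{from} the dimension formula via Corollary~6.3, not the other way around. The Chebyshev--Frobenius map alone would give the lower bound on $\ZF$ (hence the upper bound on the dimension), but the matching lower bound on the dimension still needs an independence argument like the paper's degree/residue construction.
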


We show that  $\tKF:= \KF \ot_\ZF \tZF$, where $\tZF$ is the field of fractions of $\ZF$,  is a division algebra having finite dimension over its center $\tZF$. Thus every element $\alpha \in \tKF$ lies in a finite field extension of $\tZF$ and hence has the reduced trace $\tr_\tZF(\al) \in \tZF$. We recall the definition of the reduced trace in Section \ref{divalg}. The second goal of the paper is to compute the reduced trace of elements of $\KF$.

To state the theorem, denote  by $\cS$ the set of all isotopy classes of simple diagrams on $F$, where a simple diagram is the union of disjoint, non-trivial simple closed curves on $F$.  For each $\al\in \cS$ one can define an element $T(\al) \in \KF$, such that the set $\{ T(\al) \mid \al \in \cS\}$ is a $\BC$-basis of $\KF$ and $T(\al)$ is central if and only if $\al$ is in a certain subset $\Sz$ of $\cS$. See Section \ref{sec.Cheb} for details. The definition of $T(\al)$ involves Bonahon and Wong's threading map \cite{BW2}.
As the $\BC$-vector space $\KF$ has basis $\{ T(\al) \mid \al \in \cS\}$, hence it is enough to compute the trace of each $T(\al)$.
\begin{thm}  [See Theorem \ref{thm.trace}] \label{thm.2}
 Let $F$ be a finite type surface and $\zeta$ be a root of 1. For $\al\in \cS$ one has
$$
\tr_\tZF(T(\al))= \begin{cases}
T(\al) \quad &\text{if $T(\al)$ is central, i.e., }  \al\in \Sz \\
0     & \text{otherwise.}
\end{cases}
$$
\end{thm}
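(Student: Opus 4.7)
The plan is to characterize the reduced trace on $\tKF$ by a uniqueness property and then verify that the candidate functional
\[
\tau\bigl(T(\alpha)\bigr) := \begin{cases} T(\alpha) & \text{if } \alpha \in \Sz, \\ 0 & \text{otherwise}, \end{cases}
\]
extended $\BC$-linearly to $\KF$ and then $\tZF$-linearly to $\tKF$, meets that characterization. For any central simple algebra $A$ over its center $Z$, non-degeneracy of the trace form $(x,y)\mapsto\tr_Z(xy)$ implies that any cyclic $Z$-linear functional $A\to Z$ is a $Z$-multiple of the reduced trace, so the normalization $\tau(1)=1$ determines it uniquely. Applied to $\tKF/\tZF$ (which is a central division algebra by Theorem \ref{thm.1}), it therefore suffices to check that $\tau$ takes values in $\tZF$, is $\tZF$-linear, satisfies $\tau(1)=1$, and is cyclic.

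The first three properties follow quickly from the structure of the $T$-basis. Each $T(\alpha)$ with $\alpha\in\Sz$ lies in $\ZF$ by definition of $\Sz$, so $\tau$ maps into $\tZF$. We have $\tau(1)=1$ because $\emptyset\in\Sz$ and $T(\emptyset)=1$. For $\tZF$-linearity, using the fact from Section \ref{sec.Cheb} that $\{T(\alpha):\alpha\in\Sz\}$ is a $\BC$-basis of $\ZF$, the non-trivial input is showing that whenever $\alpha\in\Sz$ and $\beta\in\cS\setminus\Sz$, the product expanded in the $T$-basis, $T(\alpha)T(\beta)=\sum_\gamma c^\gamma_{\alpha,\beta}T(\gamma)$, has vanishing coefficient $c^\gamma_{\alpha,\beta}$ for every $\gamma\in\Sz$.

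The core of the proof is cyclicity, $\tau(T(\alpha)T(\beta))=\tau(T(\beta)T(\alpha))$, which by $\BC$-bilinearity reduces to the combinatorial symmetry
\[
c^\gamma_{\alpha,\beta} = c^\gamma_{\beta,\alpha} \qquad \text{for all }\alpha,\beta\in\cS,\ \gamma\in\Sz;
\]
equivalently, the commutator $T(\alpha)T(\beta)-T(\beta)T(\alpha)$ has zero ``central component'' in the $T$-basis. This is where I expect the main obstacle to lie. One natural attack uses Bonahon--Wong product identities for threaded curves, arguing that any resolution of $\alpha\cup\beta$ yielding a diagram in $\Sz$ appears symmetrically when $\alpha$ and $\beta$ are swapped, while the asymmetric contributions are concentrated in the non-central part. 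A cleaner alternative bypasses this symmetry entirely: for each $\alpha\notin\Sz$, exhibit an invertible element $g\in\tKF^\times$ and a root of unity $\lambda\neq 1$ with $gT(\alpha)g^{-1}=\lambda T(\alpha)$; then
\[
T(\alpha)=(\lambda-1)^{-1}\bigl[\,gT(\alpha),\,g^{-1}\,\bigr]
\]
is a commutator, so $\tr_\tZF(T(\alpha))=0$ automatically by the cyclicity of any trace. The construction of such a conjugating $g$ --- plausibly via the Bonahon--Wong quantum trace embedding into a Chekhov--Fock quantum torus (where monomials are eigenvectors for conjugation by invertible monomials) or via the two pants-decomposition splitting extended from \cite{FK} --- is, I anticipate, the crux of the argument.
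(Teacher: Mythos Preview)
Your uniqueness-of-trace strategy is genuinely different from the paper's direct computation. The paper writes $T(\alpha)=\prod_jT_{m_j}(C_j)$ for disjoint non-peripheral loops $C_j$, uses $T_aT_b=T_{a+b}+T_{|a-b|}$ together with the centrality of $T_m(C_j)$ (or $T_{2m}(C_j)$) to reduce the exponents, and then works in the commutative tower $\tZF\subset\cC'\subset\cC$ with $\cC'=\tZF(C_1,\dots,C_{k-1})$ and $\cC=\cC'(C_k)$. Corollary~\ref{r.deg} identifies the minimal polynomial of $C_k$ over $\cC'$ as $T_{m'}(x)-u$ with $u$ transcendental over $\BQ$, and a Chebyshev remainder calculation (Lemma~\ref{r.trace0}) forces $\Tr_{\cC/\cC'}(T_s(C_k))=0$ for $0<s<m'$; transitivity of trace then gives $\tr(T(\alpha))=0$.

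Your proposal leaves both load-bearing steps unfilled. For $\tZF$-linearity you need the $\BC$-span of $\{T(\beta):\beta\notin\Sz\}$ to be a $\ZF$-submodule, which you flag but do not prove. There is no evident $\Rz$-grading on $\KF$ to force this: the degree map of Theorem~\ref{r.gen1} records only the lead term, so lower-order terms of $T(\alpha)T(\beta)$ can a priori land in $\Sz$. I do not see a way to establish this short of already knowing the trace formula.

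For cyclicity, approach~(b) has a concrete obstruction. Under any quantum-torus model (available only for $p>0$), a simple diagram maps to a Laurent \emph{polynomial}, and $T(\alpha)=\prod_jT_{l_j}(C_j)$ expands as a $\BC$-combination of the diagrams $\prod_jC_j^{k_j}$ with $k_j\equiv l_j\pmod 2$; for $m>2$ these terms generally have distinct residues in $\Rz$, so conjugation by a torus monomial scales them by different roots of unity and $T(\alpha)$ is not an eigenvector. Even when the residues coincide, the conjugating monomial need not lie in $\tKF$; and for closed surfaces no such embedding is available. Approach~(a) is left as a hope. So the architecture is sound, but the proposal is not yet a proof; the paper's field-tower computation is what actually carries the weight.
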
 

 Along the way we develop tools for determining when a collection of skeins forms a basis for $\tKF$.
 
The last goal of the paper is to prove that  there exists a splitting of $\tKF$ over its center coming from pairs of pants decompositions of the surface.

\begin{thm}
[See Theorem \ref{r.decomp}]  \label{thm.3}
 Let $F$ be a finite type surface of negative Euler characteristic.
There exist two pants decompositions $\cP$ and $\cQ$ of $F$ such that for any root of unity $\zeta$  the $\tZF$-linear map
 $$
  \psi:\Cz(\cP) \otimes _{\tZF} \Cz(\cQ) \to \tKF, \quad \psi(x\otimes y) \to xy, 
 $$
 is a $\tZF$-linear isomorphism of vector spaces. Here $\Cz(\cP)$ (respectively $\Cz(\cQ)$) is the $\tZF$-subalgebra of $\tKF$ generated by the curves in $\cP$ (respectively in $\cQ$). Both $\Cz(\cP)$ and $\Cz(\cQ)$ are maximal commutative subalgebras of the division algebra $\tKF$.
 \end{thm}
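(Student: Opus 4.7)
The plan is to combine the dimension formula of Theorem~\ref{thm.1} with a leading-term analysis for products of Chebyshev-threaded curves, extending the strategy of \cite{FK}. First I would choose two pants decompositions $\cP=\{\gamma_1,\ldots,\gamma_n\}$ and $\cQ=\{\delta_1,\ldots,\delta_n\}$ (where $n=3g-3+p$) that are dual in a sufficiently strong sense: each curve in one system transversally intersects a small prescribed set of curves of the other, matching the combinatorial setup of \cite{FK}. Because the curves of a pants decomposition are pairwise disjoint, $\Cz(\cP)$ is a commutative $\tZF$-subalgebra of $\tKF$, and likewise for $\Cz(\cQ)$. By the Chebyshev/threading theory, $T_m(\gamma_i)\in \ZF$, so each $\gamma_i$ is algebraic over $\tZF$ of degree dividing $m$ (or $2m$ in the case $\ord(\zeta)\equiv 0\pmod 4$ for an appropriate collection of $g$ non-separating curves, which accounts for the extra $2^{g}$ factor contributed by $\Cz(\cP)$).

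The second step is to pin down $\dim_{\tZF}\Cz(\cP)$ and verify maximality. I would show that the monomials $\gamma^{\mathbf{a}}=\prod_i \gamma_i^{a_i}$, with $a_i$ running over an appropriate box, are $\tZF$-linearly independent. Writing $d$ for the PI-degree of the division algebra $\tKF$, Theorem~\ref{thm.1} gives $d^{2}=\dim_{\tZF}\tKF$, hence $d=m^{n}$ in the first case and $d=2^{g}m^{n}$ in the second. The general theory of central simple algebras forces any commutative subalgebra to have $\tZF$-dimension at most $d$; matching this bound with the count above shows that $\Cz(\cP)$ is maximal commutative, and the same reasoning applies to $\Cz(\cQ)$.

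Finally, to prove that $\psi$ is an isomorphism, I would observe that the source has $\tZF$-dimension $d^{2}=\dim_{\tZF}\tKF$, so it suffices to show surjectivity, equivalently that $d^{2}$ products $T(\gamma^{\mathbf{a}})T(\delta^{\mathbf{b}})$ are $\tZF$-linearly independent in $\tKF$. I would introduce an ordering on the simple-diagram basis $\{T(\al):\al\in\cS\}$ coming from geometric intersection data with a reference system, and compute the leading term of each such product in this ordering. The duality of $\cP$ and $\cQ$ is arranged precisely so that these leading terms are all distinct, yielding linear independence. The main obstacle is this leading-term analysis: one must control the cancellations in the Chebyshev product expansion across the intersections $\gamma_i\cap\delta_j$, handle the extra $2$-torsion elements needed in the $\ord(\zeta)\equiv 0 \pmod 4$ case, and verify that the dual-pants structure inherited from \cite{FK} makes the leading-term assignment injective on the full index set $\prod_i \{0,\ldots,m-1\}\times\prod_j\{0,\ldots,m-1\}$ (with the appropriate enlargement in the mod~$4$ case).
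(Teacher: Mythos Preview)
Your outline matches the paper's strategy: both reduce to showing that products from $\Cz(\cP)$ and $\Cz(\cQ)$ span $\tKF$, both use the dimension formula to equate source and target dimensions, and both establish independence over $\tZF$ via a leading-term (degree) device. The paper packages your leading-term idea into a monoid homomorphism $\dz:\KFs\to\Rz=\bA/\bAz$ (Theorem~\ref{r.gen1}), so that surjectivity of $\psi$ follows once the $\dz$-values of the products exhaust $\Rz$; the required index computation, that $\overline{\deg(\Cd^*)}+\overline{\deg(\CP^*)}+\overline{\deg(\CQ^*)}=\bA$, is Lemma~\ref{r.p1}. Note that ``distinct leading terms'' alone does not give $\tZF$-independence; you need distinctness \emph{modulo} the leads of central elements, which is exactly what passing to $\Rz$ encodes.

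There is, however, a genuine gap in your plan for closed surfaces. The product formula underlying any leading-term analysis---that the lead of $S(\bn)S(\bn')$ is $S(\bn+\bn')$ up to a unit---holds in the paper only for \emph{triangular} diagrams (Proposition~\ref{prsi}); it fails in general on closed $F$ because the DT twist coordinates are not additive under stacking. The paper resolves this with a new ingredient you do not mention: stable Dehn--Thurston coordinates (Section~\ref{stabledt}). One twists by $h_\Omega^k$ until all diagrams become triangular, proves that the DT coordinates become affine in $k$ (Theorem~\ref{thm.stab}), and defines $\deg$ via the constant term $\eta$ of this affine function rather than via $\nu$ itself. The closed-surface pants pair $(\cP,\cQ)$ is then built by doubling a planar surface so that the resulting $\eta$-spans can be matched to the index $2^{2g-3}$ of $\bA$ in $\BZ^{6g-6}$ (Lemma~\ref{r.index2}). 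Without this stabilization step, your ``compute the leading term of each product'' step has no mechanism for producing an additive degree on closed surfaces, and the dual-pants combinatorics from \cite{FK} cannot be imported directly.
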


This theorem has an application in defining invariants of links in 3-manifold which will be investigated in a future work.


The paper is organized as follows. In Section \ref{divalg} we survey results about division algebras that have finite rank over their center, and facts about trace, and filtrations of algebras, with a goal of applying these to the Kauffman bracket skein algebra. 
We follow by introducing the Kauffman bracket skein algebra in Section \ref{prel}. Its basis is given in terms of simple diagrams, so we describe ways of parametrizing simple diagrams on a surface.  We also introduce a residue group. In section \ref{stabledt} we show that after enough twisting the Dehn Thurston coordinates of a simple diagram on a closed surface stabilize to become an affine function of the number of twists. This allows us to define stable Dehn-Thurston coordinates.  In Section \ref{residue} we introduce a degree map and use it to formulate a criterion for independence of a collection of skeins over its center. Section \ref{dimkoverz}  computes the dimension of the Kauffman bracket skein algebra over its center, proving Theorem \ref{thm.1}.  In section \ref{sec.42}  we find bases for commutative subalgebras of $\tKF$  generated by the curves in a primitive non-peripheral diagram on $F$ with coefficients in $\tZF$. In Section \ref{calcoftr} we find a formula for computing the trace, proving Theorem \ref{thm.2}. The paper concludes  in Section \ref{pantedec} which proves the splitting theorem (Theorem \ref{thm.3}).

\subsection{Acknowledgment } The authors thank F. Luo and D. Thurston for their comments.

This material is based upon work supported by and while serving at the National Science Foundation. Any opinion, findings, and conclusions or recommendations expressed in this material are those of the authors and do not necessarily reflect the views of the National Science Foundation. T.L.  thanks the CIMI Excellence Laboratory, Toulouse, France, for inviting him on a Excellence Chair during the period of January -- July 2017 when part of this work
was done.

 \section{Division algebras, trace, filtrations}\label{divalg}
 In this section we  survey some well-known facts about division algebras, trace, and filtrations of algebras that will be used in the paper.

\subsection{Notations and conventions}   Throughout the paper $\BN$, $\BZ$, $\BQ$, $\BR$, $\BC$ denote respectively the set of  natural numbers, integers, rational numbers, real numbers, and  complex numbers. Note that $0 \in \BN$. Let $\BZ_2= \BZ/(2\BZ)$ be the field with 2 elements.

A complex number $\zeta$ is a {\em root of 1} if there is a positive integer $n$ such that $\zeta^n=1$, and the smallest such positive integer  is called the {\em order } of $\zeta$, denoted by $\ord(\zeta)$.

All rings are assumed to be associative with unit, and ring homomorphisms preserve 1. A {\em domain} is a ring $A$, not necessarily  commutative, such that if $xy=0$ with $x,y\in A$, then $x=0$ or $y=0$.
For a ring $A$ denote by $A^*$ the set of all non-zero elements in $A$. For example, $\BC^*$ is the set of all non-zero complex number. 
 
 \subsection{Algebras finitely generated over their centers}


 The following is well-known, and we present a simple proof for completeness.

\begin{prop} \label{r.sfield} (a) If $k$ is a field and $A$ is $k$-algebra which is a domain and has finite dimension over $k$, then $A$ is a division algebra.

(b) Let $Z$ be the center of a domain $A$ and $\tZ$ be the field of fractions of $Z$. Assume  $A$ is finitely generated as a $Z$-module.  Then $\tA= A\otimes_ Z \tZ$ is a division algebra.

\end{prop}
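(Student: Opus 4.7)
For part (a), the plan is the standard ``finite-dimensional domain'' argument via multiplication operators. Pick any nonzero $a\in A$, and consider the $k$-linear left multiplication $L_a:A\to A$, $x\mapsto ax$. Because $A$ is a domain and $a\ne 0$, $L_a$ is injective. Since $\dim_k A<\infty$, $L_a$ is therefore a bijection, so there exists $b\in A$ with $ab=1$. To upgrade this to a two-sided inverse I would compute $a(ba-1)=(ab)a-a=0$ and again invoke the domain property together with $a\neq 0$ to conclude $ba=1$. Hence every nonzero element of $A$ is invertible, and $A$ is a division algebra.

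For part (b), the plan is to reduce to (a) by showing that $\tA=A\otimes_Z \tZ$ is a finite-dimensional $\tZ$-algebra which is still a domain; then (a) applies verbatim. Finite-dimensionality is immediate: if $a_1,\dots,a_n$ generate $A$ as a $Z$-module, then $a_1\otimes 1,\dots,a_n\otimes 1$ span $\tA$ over $\tZ$, so $\dim_{\tZ}\tA\le n<\infty$.

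The only real content is verifying that $\tA$ is a domain. Because $Z$ is central, every nonzero element of $Z$ is a non-zero-divisor in $A$ (if $sx=0$ with $s\in Z\setminus\{0\}$ then $x=0$ by the domain hypothesis on $A$), and the Ore conditions are trivially satisfied. Hence $\tA$ can be identified with the central localization $A[(Z\setminus\{0\})^{-1}]$, whose elements are fractions $a/s$ with $a\in A$, $s\in Z\setminus\{0\}$, and in which $a/s=0$ iff $ua=0$ for some $u\in Z\setminus\{0\}$. If $(a/s)(b/t)=ab/(st)=0$, then $uab=0$ for some $u\in Z\setminus\{0\}$, so $ab=0$ in $A$ (using that $u$ is a non-zero-divisor and $A$ is a domain), hence $a=0$ or $b=0$, proving $\tA$ is a domain. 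Applying part (a) to the finite-dimensional $\tZ$-algebra $\tA$ then yields that $\tA$ is a division algebra.

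The only step that requires any care is the identification of $\tA$ with the central localization $A[(Z\setminus\{0\})^{-1}]$ and the verification that this localization is a domain; everything else is formal. Since $Z$ lies in the center, this is routine, so I do not anticipate any serious obstacle.
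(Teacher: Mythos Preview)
Your proof is correct and follows essentially the same approach as the paper. For (a) the paper instead observes that the $k$-subalgebra generated by $a$ is a commutative finite-dimensional domain over $k$, hence a field, while you argue directly via injectivity of $L_a$; for (b) both proofs identify $\tA$ with the central localization $\{az^{-1}\}$, check it is a finite-dimensional domain over $\tZ$, and invoke (a).
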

\begin{proof}  (a) Suppose $0\neq a\in A$.  The $k$-subalgebra of $A$ generated by $a$,  being a commutative finite domain extension  of the field $k$, is a field. Hence $a$ has an inverse.

(b) Every element of $\tA$ can be presented in the form $a z^{-1}$ where $a\in A$ and $0\neq z \in Z$. From here it is easy to show that $\tA$ is a domain and the natural map $A \to \tA$ is an embedding. Since $A$ is $Z$-finitely generated, $\tA$ is also $\tZ$-finitely generated, and (b) follows from (a). \end{proof}

The above proposition reduces many problems concerning domains which are finitely generated as  modules over their centers to the case of division algebras finitely generated over their centers.
\subsection{Trace}
Suppose $k$ is a field and $A$ is a $k$-algebra which is finite-dimensional as a $k$-vector space. For $a\in A$, the left multiplication by $a$ is a $k$-linear operator acting on $A$, and its trace is denoted by $\Tr_{A/k}(a)$.
The {\em reduced trace} is defined by
  $$ \tr_{A/k} (a)= \frac{1} {\dim_kA}  \Tr_{A/k}(a) \in k.$$

Again, the following is well-known.
\begin{prop}\label{r.trace1} Suppose that $k$ is a field, and $A$ is  division $k$-algebra 
having finite dimension over $k$. Suppose $0\neq a\in A$.

(a)   If  $P(x)=x^l + c_{l-1} x^{l-1} + \dots c_1 x + c_0$ is the minimal polynomial of $a$ over $k$, then $\tr_{A/k}(a)=- c_{l-1}/l$.

(b) If $B$ is a division algebra with $a \in B \subset A$, then $\tr_{B/k}(a)= \tr_{A/k}(a)$.

(c) The function $\tr_{A/k}: A \to k$ is non-degenerate in the sense that for $0\neq a\in A$ there exists $b\in A$ such that $\tr_{A/k}(ab) \neq 0$. In particular, $A$ is a Frobenius algebra.
\end{prop}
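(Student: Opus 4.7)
The plan is to exploit the commutative subalgebra $k[a] \subset A$ to reduce the whole proposition to a block-diagonal trace computation, together with the fact that a nonzero element of a division algebra is invertible.

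For part (a), I would first observe that $k[a]$, being a commutative finite-dimensional $k$-subalgebra of the division algebra $A$, is itself a domain of finite $k$-dimension, hence a field; its $k$-degree equals $l = \deg P$, with $k$-basis $1, a, \dots, a^{l-1}$. Next I would view $A$ as a left $k[a]$-vector space; since $k[a]$ is a field, $A$ is free of some rank $d$, giving $\dim_k A = ld$. Choosing a left $k[a]$-basis $e_1, \dots, e_d$ of $A$, each summand $k[a] e_i$ is stable under left multiplication by $a$ (because $a \in k[a]$), and in the $k$-basis $e_i, a e_i, \dots, a^{l-1} e_i$ of this summand the restriction of $L_a$ is precisely the companion matrix of $P(x)$, of trace $-c_{l-1}$. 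Summing over $i$, $\Tr_{A/k}(a) = -d\,c_{l-1}$, and dividing by $\dim_k A = ld$ gives $\tr_{A/k}(a) = -c_{l-1}/l$.

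Part (b) will then be immediate: the minimal polynomial of $a$ over $k$ depends only on $a$ and $k$, not on the ambient division algebra, so applying (a) inside both $B$ and $A$ yields the same value $-c_{l-1}/l$.

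For part (c), nondegeneracy of $\tr_{A/k}$ will follow almost at once from the division algebra hypothesis: given $0 \neq a \in A$, take $b = a^{-1}$, and then $\tr_{A/k}(ab) = \tr_{A/k}(1) = 1$, since left multiplication by $1$ is the identity on $A$, of trace $\dim_k A$. For the Frobenius conclusion I would exhibit the bilinear form $\langle x, y \rangle := \tr_{A/k}(xy)$: associativity $\langle xy, z \rangle = \langle x, yz \rangle$ follows from associativity of multiplication in $A$, and nondegeneracy follows from what was just established. The only step needing some genuine care is the block decomposition in (a), where one must use the \emph{left} $k[a]$-module structure so that $L_a$ preserves each summand; this is the sole point at which the noncommutativity of $A$ has to be handled with attention.
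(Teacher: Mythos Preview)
Your proof is correct and follows essentially the same route as the paper's: both decompose $A$ as a left $k[a]$-vector space $\bigoplus_i k[a]\,e_i$, note that each summand is $L_a$-invariant with trace $-c_{l-1}$, and for (c) both take $b=a^{-1}$ so that $\tr_{A/k}(ab)=\tr_{A/k}(1)=1$. Your explicit mention of the companion matrix and the care about using the \emph{left} module structure are slight elaborations, but the argument is the same.
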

\begin{proof} (a)  Let $C$ be the $k$-subalgebra of $A$ generated by $a$,  then  $C$ is  a  field. By the definition of the minimal polynomial,
\begin{equation}C\cong k[x]/(P(x)).\end{equation}

As a $k$-vector space $C$ has basis $\{1,a, \dots, a^{l-1}\}$. It follows that $\Tr_{C/k} = - c_{l-1}$.
     Let $\{a_1,\dots, a_t\}$ be a basis of $A$ over $C$ so that
$ A =\bigoplus_{i=1}^t C a_i$.  Each $C a_i$ is invariant under the left multiplication by $a$, and the action of $a$ on each $Ca_i$ has trace equal to 
$ \Tr_{C/k}(a)$.  Hence,
$$ \Tr_{A/k}(a) = t \Tr_{C/k}(a) = - t   c_{l-1} = (\dim_kA) \frac{-c_{l-1}}{l}.$$
From here we have $\tr_{A/k}(a) = \frac{-c_{l-1}}{l}$.

(b) follows immediately from (a).

(c) Let $b= a^{-1}$. Then $\tr(a b) = \tr(1)=1\neq 0$.
\end{proof}

 \subsection{Maximal commutative subalgebras}  Suppose $A$ is a division algebra with center $k$.
If $C\subset A$ is a maximal commutative  subalgebra, then $C$ is a field and
\be
\dim_kA  = (\dim_k C)^2. \ee

\subsection{Dimension}   Suppose a $\BC$-algebra $A$ has center $Z$ which is a commutative domain. Let $\tZ$ be the field of fractions of $Z$. The {\em dimension } $\dim_ZA$ is defined to be the dimension of the $\tZ$-vector space $A \ot_Z \tZ$.


A {\em filtration compatible with the product} of $A$ is a sequence $\{ F_i\}_{i=0}^\infty$ of $\BC$-subspaces of $A$ such that $F_i \subset F_{i+1}$, $\bigcup_{i=0}^\infty F_i = A$, and $ F_j F_j \subset F_{i+j}$.
 For any subset $X \subset A$ let  $F_i(X) = F_i \cap X$.
 \begin{lemma} \label{r.lim2}
 Suppose $\dim_ZA < \infty$ and $\dim_\BC F_i(A) < \infty$ for every $i$.  There exists a positive integer $u$ such that for all $k \ge u$,
\be \label{eq.lim2}
\dim_{Z} A \le  \frac{\dim_\BC F_k(A)}{\dim_\BC F_{k-u}(Z)}.
\ee
\end{lemma}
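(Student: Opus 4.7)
Set $d:=\dim_Z A$, which by hypothesis is finite. The plan is to lift a $\tZ$-basis of $A\otimes_Z \tZ$ to $d$ elements of $A$ and then use the filtration multiplicativity to convert the bound $d=\dim_{\tZ}(A\otimes_Z\tZ)$ into the stated inequality between finite-dimensional pieces. Concretely, I would choose representatives $a_1,\dots,a_d\in A$ of a $\tZ$-basis of $A\otimes_Z\tZ$. Each $a_i$ lies in some $F_{n_i}(A)$, and since there are only finitely many of them one can pick $u$ with $a_1,\dots,a_d\in F_u(A)$.

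The central step is to establish injectivity of the $\BC$-linear map
$$\phi:F_{k-u}(Z)^{\oplus d}\longrightarrow F_k(A),\qquad (z_1,\dots,z_d)\longmapsto \sum_{i=1}^d z_i a_i,$$
for every $k\ge u$. That $\phi$ lands in $F_k(A)$ is immediate from the multiplicative compatibility $F_{k-u}\cdot F_u\subseteq F_k$. Injectivity is where the dimension hypothesis enters: if $\sum_i z_i a_i=0$ in $A$ with $z_i\in Z$, then applying the canonical map $A\to A\otimes_Z\tZ$ and sliding the central scalars across the tensor product yields $\sum_i a_i\otimes z_i=0$, whence $z_i=0$ for all $i$ because $\{a_i\}$ is a $\tZ$-basis. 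I expect this $Z$-linear independence argument to be the main substantive content of the lemma, even though it is short.

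From the injectivity of $\phi$ one obtains $d\cdot \dim_\BC F_{k-u}(Z)\le \dim_\BC F_k(A)$, which rearranges to the desired inequality provided the denominator is nonzero. To guarantee $\dim_\BC F_{k-u}(Z)>0$ for every $k\ge u$, I would, if necessary, enlarge the value of $u$ by adding a constant: since $1\in A=\bigcup_i F_i$, the unit lies in some $F_j$, and replacing $u$ by $u+j$ ensures $1\in F_{k-u}\cap Z\subseteq F_{k-u}(Z)$ for every $k\ge u$. This final adjustment is pure bookkeeping and is not a genuine obstacle, so the only real work in the proof is producing the injective map $\phi$.
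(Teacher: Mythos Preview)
Your proof is correct and follows essentially the same approach as the paper: both lift a $\tZ$-basis $a_1,\dots,a_d$ of $A\otimes_Z\tZ$ to $A$, choose $u$ with all $a_i\in F_u$, and use $Z$-linear independence of the $a_i$ together with $F_{k-u}\cdot F_u\subset F_k$ to obtain $d\cdot\dim_\BC F_{k-u}(Z)\le\dim_\BC F_k(A)$. Your extra care about the denominator being nonzero (by enlarging $u$ so that $1\in F_{k-u}(Z)$) is a detail the paper does not mention explicitly, but otherwise the arguments coincide.
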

\begin{proof}
 Assume $a_1,\dots, a_d \in A$ form a basis of $A\ot_Z \tZ$ over $\tZ$. Let $u$ be a number such that all $a_j$ are in $F_u(A)$. Since $a_1,\dots, a_d$ are linearly independent over $Z$, the sum $\sum_{j=1}^d Z a_j$ is a direct sum. We have
\be  \bigoplus_{j=1}^d F_{k-u}(Z) a_j \subset F_k( \bigoplus_{j=1}^d Z a_j)  \subset F_k(A). 
\label{eq.s9}
\ee
The dimension of the first space in \eqref{eq.s9} is $d \dim_\BC F_{k-u}(Z)$, while the dimension of the last one is  $\dim_\BC F_k(A)$. Hence, 
$ d \dim_\BC F_{k-u}(Z) \le \dim_\BC F_k(A)$, which is \eqref{eq.lim2}.
\end{proof}

\def\vol{\mathrm{vol}}

\subsection{Lattice points in polytope} Suppose $\BR^n$ is the standard $n$-dimensional Euclidean space. A {\em lattice} $\Lambda \subset \BR^n$ is any abelian subgroup of maximal rank $n$. A   {\em convex polyhedron} $Q$ is  the convex hull of a finite number of points in $\BR^n$ and its $n$-dimensional volume is denoted by $\vol(Q)$. Let $ k Q :=\{ kx \mid x \in Q\}.$

\begin{lemma} \label{r.lim3}
 Suppose $\Gamma \subset \Lambda$ are lattices in $\BR^n$ and $Q\subset \BR^n$ is the union of a finite number of  convex polyhedra with $\vol(Q) >0$. Let $u$ be a positive integer. One has
\begin{align}
\lim_{k\to \infty} \frac{ |\Lambda \cap kQ| }{ |\Gamma \cap (k-u)Q|} &= [\Lambda : \Gamma]. \label{eq.vol3}
\end{align}

\end{lemma}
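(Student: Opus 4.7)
The plan is to reduce this to the classical asymptotic count of lattice points in a dilated polytope. The key input is the standard estimate that for any lattice $L \subset \BR^n$ with covolume $\det(L)$ and any bounded set $P$ that is a finite union of convex polytopes,
\[
|L \cap kP| = \frac{k^n \vol(P)}{\det(L)} + O(k^{n-1}).
\]
This is proved by partitioning $\BR^n$ into translates of a fundamental parallelotope of $L$ and observing that the discrepancy between $|L \cap kP|$ and $\vol(kP)/\det(L) = k^n \vol(P)/\det(L)$ is bounded by the number of translates of the parallelotope that meet $\partial(kP) = k\,\partial P$, which is $O(k^{n-1})$ because $k\,\partial P$ has $(n-1)$-dimensional volume of order $k^{n-1}$.

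Once this estimate is in hand, I would apply it twice: to the numerator with $(L, k) = (\Lambda, k)$, and to the denominator with $(L, k) = (\Gamma, k-u)$, producing
\[
|\Lambda \cap kQ| = \frac{k^n \vol(Q)}{\det(\Lambda)} + O(k^{n-1}), \qquad |\Gamma \cap (k-u)Q| = \frac{(k-u)^n \vol(Q)}{\det(\Gamma)} + O(k^{n-1}).
\]
Since $\vol(Q) > 0$, the leading $k^n$ terms dominate the $O(k^{n-1})$ error, and $(k-u)^n/k^n \to 1$ because $u$ is fixed. Dividing the two asymptotics and taking the limit gives
\[
\lim_{k\to\infty} \frac{|\Lambda \cap kQ|}{|\Gamma \cap (k-u)Q|} = \frac{\vol(Q)/\det(\Lambda)}{\vol(Q)/\det(\Gamma)} = \frac{\det(\Gamma)}{\det(\Lambda)}.
\]
Finally, for a sublattice $\Gamma \subset \Lambda$ one has $\det(\Gamma) = [\Lambda:\Gamma]\,\det(\Lambda)$, which yields the stated equality.

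The only mild difficulty is justifying the boundary error estimate when $Q$ is a union rather than a single convex polytope, but this is routine: $\partial Q$ is contained in the union of the boundaries of the finitely many constituent polytopes, so the surface-area bound for $\partial(kQ)$ is still $O(k^{n-1})$. Consequently no serious obstacle arises, and the proof reduces to a pair of polytope lattice-point asymptotics together with a trivial limit computation.
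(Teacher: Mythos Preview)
Your argument is correct and is essentially the same as the paper's: both reduce to the standard asymptotic $|L \cap kQ|/k^n \to \vol(Q)/\vol(L)$ together with $[\Lambda:\Gamma]=\vol(\Gamma)/\vol(\Lambda)$, and you simply spell out the $O(k^{n-1})$ error term and the boundary estimate that the paper leaves implicit.
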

\begin{proof} Define $\vol(\Lambda)$ to be the $n$-dimensional volume of the parallelepiped spanned by a $\BZ$-basis of $\Lambda$. One has
$$
[\Lambda : \Gamma]  = \frac{\vol(\Gamma)}{\vol(\Lambda)},  \qquad
\lim_{k\to \infty} \frac{|\Lambda \cap k Q|}{k^n} = \frac{\vol(Q)}{\vol (\Lambda)} 
$$
from which one easily obtains  \eqref{eq.vol3}.
\end{proof}

\section{Kauffman bracket skein algebra}\label{prel}  The Kauffman bracket skein module of a 3-manifold was introduced independently by Przytycki and Turaev. For a surface the skein module has an algebra structure first considered in  \cite{Turaev}.
In this section we recall the definition of  the Kauffman bracket skein algebra of a finite type surface $F$, and present some results concerning its center. We also explain how to coordinatize the set of curves on $F$ and use coordinates to define a residue group associated to $F$ and a root of 1.

\subsection{Finite type surface} 
An oriented surface $F$ of the form $F= \bF \setminus \cV$, where $\bF$ is an oriented closed connected surface and $\cV$ is finite (possibly empty), is called a {\em finite type surface}. A point in $\cV$ is called a puncture.
 The genus $g= g(F)$ and the puncture number $p= |\cV|$ totally determine the diffeomorphism class of $F$, and for this reason we denote $F= F_{g,p}$. The Euler characteristic of $F$ is $2-2g -p$, which is non-negative only in 4 cases:
 $$ (g,p) = (0,0), (0,1), (0,2), \ \text{or} \ (1,0).$$
 Since the analysis of these four surfaces is simple and requires other techniques, very often we consider these cases separately.

 \def\SS{\mathring{\cS}}
 \def\Se{\cS^\ev}
 \def\Sd{\cS^\partial}
 
 Throughout this section we fix a finite type surface $F=F_{g,p}$.

 In this paper a {\em loop} on $F$ is a unoriented  submanifold diffeomorphic to the standard circle. A loop is {\em trivial} if it bounds a disk in $F$; it is {\em peripheral} if it bounds a disk in $\bF$ which contains exactly one puncture. A {\em simple diagram} is the union of several disjoint non-trivial loops.
 A simple diagram is {\em peripheral} if all its components are peripheral.
 
 If $a: [0,1]\to \bF$ is a smooth map such that $a(0), a(1) \in \cV$ and $a$ embeds $(0,1)$ into $F$ then the image of $(0,1)$  is called an {\em ideal arc}. Isotopies of ideal arcs are always considered in the class of ideal arcs.
 
 Suppose $\al\subset F $ is either an ideal arc or a simple diagram and $\beta \subset F$ is a simple diagram. The geometric intersection number $I(\al,\beta)$ is the minimum of $|\al' \cap \beta''|$, with all possible $\al'$ isotopic to $\al$ and $\beta'$ isotopic to $\beta$. We say that $\al$ is {\em $\beta$-taut}, or  $\al$ and $\beta$ are {\em taut}, if they are transverse and $|\al \cap \beta| = I(\al,\beta)$.
 
 A simple diagram $\al\subset F$ is {\em even} if $I(\al,a)$ is even for every loop $a\subset F$. It is easy to see that $\al$ is even if and only if $\al$ represents the zero element in the homology group $H_1(\bF, \BZ_2)$.

Very often we identify a simple diagram with its isotopy class.
 Denote by $\cS= \cS(F)$ the set of all isotopy classes of simple diagrams on $F$. Let $\Se\subset \cS$ be the subset of all classes of even simple diagrams, and $\Sd\subset \cS$ be the subset of all peripheral ones. For convenience, we make the convention that the empty set $\emptyset$ is a peripheral simple diagram. Thus $\emptyset \in \Sd \subset \Se \subset \cS$.

\subsection{Kauffman bracket skein algebra} 

A {\em framed link}  in  $F\times [0,1]$ is an embedding of  a disjoint union of oriented annuli in  $F\times [0,1]$.  By convention the empty set is considered as a framed link with 0 components and is isotopic only to itself.

For a  non-zero complex number $\zeta$, the {\em Kauffman bracket skein module} of $F$ at $\zeta$, denoted by $\KF$, is the $\BC$-vector space freely spanned by all isotopy classes of framed links in $F \times [0,1]$ subject to the following {\em skein relations}
\begin{align}
\lcr &= \zeta \zer+ \zeta ^{-1}\ift   \label{KBSR}\\
\bigcirc \sqcup L  &=  (- \zeta^2 -\zeta^{-2}) L\label{KBSR2}.
\end{align}
Here the framed links in
each expression are identical outside the balls pictured in the diagrams, and the arcs in the pictures are supposed to have blackboard framing.
If the two arcs in the crossing belong to the same component then it is assumed that the same side of the annulus is up.

\begin{theorem} \cite{PS,SW}  \label{r.basis}
 The set $\cS$ of isotopy classes of simple diagrams is a basis of $K_{\zeta}(F)$  over $\mathbb{C}$.
\end{theorem}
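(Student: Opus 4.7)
The proof has two parts: $\cS$ spans $\KF$, and $\cS$ is linearly independent in $\KF$. The spanning part is standard and algorithmic. Given any framed link $L \subset F\times[0,1]$, choose a generic projection onto $F$; the result is a planar-type diagram with finitely many transverse double points. Applying relation \eqref{KBSR} at each crossing rewrites $[L]$ as a $\mathbb{C}$-linear combination of classes with one fewer crossing, so by induction $[L]$ becomes a combination of crossing-free diagrams, each a disjoint union of embedded loops with blackboard framing. Relation \eqref{KBSR2} then absorbs every disk-bounding loop as a scalar factor $-\zeta^2-\zeta^{-2}$, leaving a $\mathbb{C}$-linear combination of elements of $\cS$.

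For linear independence my plan is to construct a reduction map $\pi: \KF \to \mathbb{C}\cS$ which is the identity on $\cS$. Given such a $\pi$, any relation $\sum c_\alpha \alpha = 0$ in $\KF$ is transported to $\sum c_\alpha \alpha = 0$ in $\mathbb{C}\cS$, forcing all $c_\alpha = 0$. The map $\pi$ is defined on framed link representatives by the spanning reduction of the previous paragraph. To see that $\pi$ descends to a well-defined map on $\KF$ one must check that (a) the output of the reduction does not depend on the projection chosen or on the order in which crossings are smoothed, and (b) the skein relations are themselves consistent with the reduction, which is essentially automatic from how $\pi$ is constructed.

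The main obstacle is (a), a confluence/diamond-lemma argument. One needs to show that two generic projections of $L$ (related by Reidemeister II and III moves combined with ambient isotopies of $F$) and two orderings of the smoothings always yield the same element of $\mathbb{C}\cS$. Kauffman's specific coefficients $\zeta^{\pm 1}$ in \eqref{KBSR} and $-\zeta^2-\zeta^{-2}$ in \eqref{KBSR2} are precisely what is forced by Reidemeister II and III invariance. I would verify (a) first in the planar case (Kauffman's original calculation), then bootstrap to surfaces by cutting $F$ along an ideal arc or a non-separating simple closed curve and inducting on $-\chi(F)$, with base cases the disk, annulus, and pair of pants where $\KF$ can be computed by hand.

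A cleaner alternative, essentially the route of \cite{PS,SW}, is to pass to the generic setting: let $R = \mathbb{Z}[A, A^{-1}]$ and let $K_A(F)$ denote the analogous skein algebra with the indeterminate $A$ in place of $\zeta$. Specialization gives $K_A(F) \otimes_R \mathbb{C} \cong \KF$ via $A \mapsto \zeta$, and since base change preserves freeness, it suffices to prove $\cS$ is an $R$-basis of $K_A(F)$. Bullock's theorem identifies $K_{-1}(F)$, modulo its nilradical, with the coordinate ring of the $SL_2(\mathbb{C})$-character variety of $F$, where the trace functions indexed by $\cS$ form a spanning set; flatness of the deformation $K_A(F)$ around $A=-1$ together with a dimension count then yields the required freeness.
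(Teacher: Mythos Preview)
The paper does not supply its own proof of this theorem; it simply cites \cite{PS,SW} and moves on. So there is nothing to compare against except the cited literature.

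Your spanning argument is correct and is the standard one. Your first approach to linear independence, via a confluence/diamond-lemma argument showing that the resolution process is well-defined independent of choices, is essentially the method of \cite{SW}. The idea is right, though the execution you sketch (induction on $-\chi(F)$ by cutting along arcs) is not how it is actually done: confluence is verified locally, by checking that resolving overlapping local pictures in either order gives the same answer, and this is insensitive to the global topology of $F$. You also omit the framed Reidemeister~I move from your list of diagram moves; it must be checked too, and this is where the specific value $-\zeta^2-\zeta^{-2}$ in \eqref{KBSR2} is forced.

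Your second approach has a genuine gap. You propose to deduce freeness of $K_A(F)$ over $R=\BZ[A,A^{-1}]$ from Bullock's theorem plus ``flatness of the deformation $K_A(F)$ around $A=-1$ together with a dimension count.'' But flatness of $K_A(F)$ over $R$ is essentially what you are trying to prove, so invoking it is circular. Bullock's theorem only tells you that $K_{-1}(F)$ modulo its nilradical is the character-variety coordinate ring, and knowing that trace functions \emph{span} there does not give linear independence; in fact the linear independence of $\cS$ at $A=-1$ is usually \emph{deduced} from the generic basis theorem, not used to prove it. Finally, there is no meaningful dimension count available, since all the spaces in sight are infinite-dimensional over $\BC$. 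Drop this paragraph and commit to the confluence argument.
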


For two framed links $L_1$ and $L_2$ in $S \times [0,1]$, their product, $L_1L_2$, is defined by first isotoping $L_1$ into $F \times (1/2, 1)$ and $L_2$ into $F\times (0, 1/2)$ and then taking the union of the two. This product gives $\KF$ the structure of  a  $\BC$-algebra, which is in most cases non-commutative.
Let  $\ZF$ be the center of $\KF$.

\begin{theorem} Let $F$ be a finite type surface and  $\zeta$  a root of 1.

(a) \cite{PS1} The algebra $\KF$ is a domain.

(b) \cite{FKL} The module $\KF$ is finitely generated as a $\ZF$-module. 
\end{theorem}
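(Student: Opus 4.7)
The plan is to handle both parts via a single filtration argument that reduces $\KF$ to a quantum torus, whose structure is well understood. First I would equip $\KF$ with a filtration $\{F_k\}$ compatible with the product, built from the basis $\cS$. The most convenient choice depends on whether $F$ has punctures: for a punctured surface I would fix an ideal triangulation $\Delta$ and set $F_k \KF$ to be the span of simple diagrams whose total geometric intersection number with the edges of $\Delta$ is at most $k$; for a closed surface I would fix a pants decomposition together with a dual system of seams and filter by the sum of Dehn-Thurston intersection coordinates. The essential geometric input is that after putting two simple diagrams into taut position, resolving their crossings via the skein relations produces only terms of strictly smaller intersection number together with the leading ``smoothing'' term, so $F_i \cdot F_j \subset F_{i+j}$.

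For part (a), once the filtration is in place, I would identify $\gr \KF$ with a subalgebra of a quantum torus $\bT_\zeta$ on the coordinate variables, with $q$-commutation relations governed by the Thurston symplectic form on the coordinates. Quantum tori are iterated Ore extensions of Laurent polynomial rings and hence are domains. A standard filtered-algebra lemma then transfers the domain property to $\KF$: if $xy=0$ in $\KF$ with $x,y\neq 0$, the leading symbols $\bar x,\bar y\in\gr\KF$ would satisfy $\bar x\bar y=0$ in the quantum torus, which is impossible.

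For part (b), I would invoke the Chebyshev threading map: when $\zeta$ is a root of unity of order relevant to $m=\ord(\zeta^4)$, the Bonahon-Wong result says that $T_m$ applied to each loop produces a central element of $\KF$. Let $Z'\subset\ZF$ be the $\BC$-subalgebra generated by the threaded loops (and peripheral loops). Under the filtration, $\gr Z'$ sits inside the ``$m$-th power'' subtorus of $\bT_\zeta$, which is exactly the center of $\bT_\zeta$, and $\bT_\zeta$ is free of finite rank over this center, with basis the monomials whose exponent vectors lie in a fundamental domain for $\BZ^n/m\BZ^n$. Lifting these basis monomials to $\KF$ and applying the filtered-module fact that generation of $\gr M$ over $\gr R$ forces generation of $M$ over $R$, I would conclude that $\KF$ is finitely generated over $Z'$, and a fortiori over $\ZF$.

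The main obstacle is not the formal transfer from $\gr\KF$ back to $\KF$, which is routine, but rather establishing the quantum-torus structure of $\gr\KF$ precisely enough to identify its center with the span of $m$-th power monomials. Verifying the leading-term behavior of products under the skein relations, especially near trivial or peripheral loops and at boundary moves of the pants decomposition, is the delicate combinatorial heart of the argument; this is where the stable Dehn-Thurston coordinates promised in Section~\ref{stabledt} will likely be used. A secondary issue specific to (b) is the closed surface case, where no ideal triangulation exists: I would expect to cap off punctures, apply the punctured-surface statement, and descend, or equivalently, work directly with the pants-decomposition filtration and verify compatibility by hand.
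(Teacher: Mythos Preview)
The paper does not prove this theorem; it is stated as a cited result, with (a) attributed to \cite{PS1} and (b) to \cite{FKL}. So there is no ``paper's own proof'' to compare against here.

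That said, your sketch is broadly in line with how those references argue. A few remarks. For (a), the filtration/lead-term strategy is exactly the right idea, and the paper itself records the relevant lead-term formula (Proposition~\ref{r.prod1}) in the punctured case. In the closed case, however, the analogous formula (Proposition~\ref{prsi}) is only stated for \emph{triangular} diagrams, so the associated graded is not literally a quantum torus on all of $\cS$; one has to twist into the triangular cone first. You correctly anticipate that the stable Dehn--Thurston machinery of Section~\ref{stabledt} is what handles this, but your phrasing ``filter by the sum of Dehn--Thurston intersection coordinates'' glosses over the fact that those filtration pieces are infinite-dimensional (twist coordinates are unbounded), so the passage to a finitely generated quantum-torus model is not immediate. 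For (b), your outline via the threading map and a filtered-module lifting argument matches the strategy of \cite{FKL}; the suggestion to ``cap off punctures and descend'' for the closed case does not work as stated, since capping a puncture does not induce an algebra map on skein algebras in the needed direction---the closed case is handled directly with the DT filtration, again restricting attention to triangular diagrams.
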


\no { 
  
 From Theorem \ref{r.basis} we immediately get  the following statement. 
\begin{prop}\label{transcendent}
Let $\{C_i\}_{i=1}^k$ be a collection of disjoint non-trivial loops on $F$ such that no two of them are isotopic. Then the $\BC$-subalgebra of $\KF$ generated by the $\{C_i\}_{i=1}^k$ is naturally isomorphic to the algebra $\BC[C_1,\dots, C_k]$ of polynomials in $k$ variables $C_1,\dots, C_k$.  \end{prop} 
}

Let $\tKF= \KF \ot_{\ZF} \tZF$, where $\tZF$ is the field of fractions of $\ZF$.
From Proposition \ref{r.sfield}, we have the following corollary.
\begin{cor} \label{r.Fro}
  For a finite type surface $F$ and a root of unity  $\zeta$, the localized algebra $\tKF$ is a division algebra and a Frobenius algebra.
  \end{cor}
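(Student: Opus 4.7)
The corollary is essentially a direct combination of two general facts already assembled in the excerpt with the two parts of the preceding theorem about $\KF$, so the proof proposal is short.

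My plan is to apply Proposition \ref{r.sfield}(b) with $A = \KF$ and $Z = \ZF$. The hypotheses of that proposition are exactly the two bullets of the theorem just stated: $\KF$ is a domain (by \cite{PS1}) and $\KF$ is a finitely generated $\ZF$-module (by \cite{FKL}). The conclusion of Proposition \ref{r.sfield}(b) then gives directly that $\tKF = \KF \otimes_{\ZF} \tZF$ is a division algebra. This dispatches the first assertion.

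For the Frobenius statement, I would invoke Proposition \ref{r.trace1}(c), which says any division $k$-algebra of finite dimension over $k$ is Frobenius, with the non-degenerate bilinear form $(a,b) \mapsto \tr_{A/k}(ab)$. To apply this with $k = \tZF$ and $A = \tKF$, I need two things: (i) the center of $\tKF$ is $\tZF$, and (ii) $\dim_{\tZF} \tKF < \infty$. Both are routine consequences of the setup. For (ii), since $\KF$ is generated as a $\ZF$-module by finitely many elements $a_1,\dots,a_d$, the same $a_1,\dots,a_d$ generate $\tKF$ as a $\tZF$-module, so $\dim_{\tZF} \tKF \le d$. For (i), any element $z/s \in \tZF$ (with $z \in \ZF$, $s \in \ZF \setminus \{0\}$) clearly commutes with every element of $\tKF$; conversely, if $\alpha = a/s \in \tKF$ is central, then for every $b \in \KF$ we have $ab = ba$ in $\tKF$, which after clearing denominators forces $a \in \ZF$, hence $\alpha \in \tZF$.

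The main potential obstacle is essentially bookkeeping: one should double-check in step (i) above that the localization $\tKF$ is obtained by inverting central elements (it is — elements of $\ZF \setminus \{0\}$ — which is why no Ore-type difficulty arises and the natural map $\KF \hookrightarrow \tKF$ is an embedding, as already observed in the proof of Proposition \ref{r.sfield}(b)). Once this is in place, the combination Proposition \ref{r.sfield}(b) $+$ Proposition \ref{r.trace1}(c) yields both assertions of the corollary immediately, with the reduced trace $\tr_{\tKF/\tZF}$ furnishing the Frobenius form that will in fact be the central object of Section \ref{calcoftr}.
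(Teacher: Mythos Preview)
Your proof is correct and follows exactly the route the paper intends: the division-algebra claim is Proposition~\ref{r.sfield}(b) applied with $A=\KF$, $Z=\ZF$ (using the two parts of the preceding theorem), and the Frobenius claim then follows from Proposition~\ref{r.trace1}(c). Your verification that the center of $\tKF$ equals $\tZF$ is correct but not actually needed to invoke Proposition~\ref{r.trace1}(c), whose hypothesis only requires $\tKF$ to be a finite-dimensional division algebra over the field $\tZF$.
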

  
 \begin{remark} Corollary \ref{r.Fro}
  was proved in \cite{AF} for the case when $p\ge 1$ and $\ord(\zeta) \neq 0 \mod 4$, using explicit calculation of the trace. \end{remark}

\subsection{Chebyshev basis and center}  \label{sec.Cheb}

   The {\em Chebyshev polynomials of the first kind} are defined recursively by
\begin{equation}
 T_0(x)=2 ,\ \  T_1(x)=x\ \mbox{ and} \  T_k(x)=xT_{k-1}(x)-T_{k-2}(x).
 \end{equation}
They 
satisfy the product to sum formula,
\begin{equation} T_k(x)T_l(x)=T_{k+l}(x)+T_{|k-l|}(x).
\label{eq.Che1}
\end{equation}

Suppose $\al\in \cS$ is a simple diagram. Some components of $\al$ may be isotopic to each other. Let 
$C_1, \dots, C_k$ be a maximal collection of components of $\al$ such that no two of them are isotopic. Then there are positive integers $(l_1,\ldots,l_k)$ such that $\al$ is the union of $l_j$ parallel copies of $C_j$ with $j=1,\dots, k$. In other words, 
$  \al = \prod_{j=1}^k C_j^{l_j} \ \text{ in $\KF$}.
$
 Let
\be 
T(\al) = \prod_{j=1}^k T_{l_j} (C_j) \in \KF.
\ee

 \def\Ao{{\cA}^{\bullet}}
 \def\Ad{{\cA}^\partial}
 \def\bA{\bar A}
 \def\bB{\overline {\cB}}
 \def\Ae{{\cA}^\ev}
  \def\Se{{\cS}^\ev}
 \def\bA{\overline{ \cA}}
 \def\bAe{\overline{ \Ae}}
\def\bAd{\overline{ \Ad}}
 \def\Sd{\cS^\partial}
 \def\vol{\mathrm{vol}}

If $\al, \beta\in \cS$ then in general $\al \beta \not \in \cS$. However, if  $I(\al, \beta) =0$ where $\al, \beta \in \cS$, then $\al$ and $\beta$ can be represented by disjoint simple diagrams, and hence $\al \beta \in \cS$. In particular,  
 if $\al \in \cS$ and $ \beta \in \Sd$, then $\al \beta =\beta \al \in \cS$. Further, 
 if $\al\in \cS$, then  $\al^k \in \cS$. 
 
 \no{For a subset $U\subset \cS$ define
\begin{align*}
U^{\star k} &= \{ \al^k \mid \al \in U\}, \\
\Sd \star U &= \{ \beta \al \mid \beta \in \Sd, \al \in U.\}
\end{align*}
}

 Assume that  $\zeta$ is a root of unity. Let $m= \ord(\zeta^4)$. Note that $m = \ord(\zeta)/\gcd(4, \ord (\zeta))$.  Define the following subset $\cS_\zeta$ of $\cS$ by
\begin{itemize}
\item If $\ord(\zeta) \neq 0 \mod 4$, then $\cS_\zeta = \{ \al \beta^m \mid \al \in \Sd, \beta \in \cS\}$.
\item If $\ord(\zeta) = 0 \mod 4$, then $\cS_\zeta = \{ \al \beta^m \mid \al \in \Sd, \beta \in \Se\}$.
\end{itemize} 

 \no{
 
 \be 
 \cS_\zeta := 
 \begin{cases}
\{ \al \beta^m \mid \al \in \Sd, \beta \in \cS\} 
 \quad &\text{if} \ \ord(\zeta) \neq 0 \mod 4 \\
 \{ \al \beta^m \mid \al \in \Sd, \beta \in \Se\}
&\text{if} \ \ord(\zeta) = 0 \mod 4. \end{cases}  
 \ee
}

\begin{theorem} \cite{FKL}\label{center} 
Let $F$ be a finite type surface and $\zeta$ be a root of 1. Recall that $\ZF$ is the center of the skein algebra $\KF$.

(a) $ \{T(\al) \mid \al \in \cS\}$ is a  basis of the $\BC$-vector space  $\KF$.

(b) $\{ T(\al) \mid  \al \in \cS_\zeta\}$ is a  basis of the $\BC$-vector space  $\ZF$.
\end{theorem}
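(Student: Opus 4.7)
The plan for part (a) is a unitriangularity argument. For $\alpha = \prod_{j=1}^k C_j^{l_j}$ with the $C_j$ pairwise non-isotopic non-trivial loops chosen mutually disjoint, the $C_j$ commute in $\KF$, so $T(\alpha) = \prod_j T_{l_j}(C_j)$ is unambiguous. Since $T_n(x) = x^n + (\text{lower terms of the same parity})$, expanding yields $T(\alpha) = \alpha + \sum c_{\alpha'}\, \alpha'$, the sum ranging over simple diagrams $\alpha'$ supported on $\{C_j\}$ with strictly smaller total multiplicity $\sum l_j' < \sum l_j$. Ordering $\cS$ by total multiplicity, the change-of-basis matrix from the basis $\cS$ of Theorem \ref{r.basis} to $\{T(\alpha)\}_{\alpha \in \cS}$ is unipotent upper-triangular, giving (a).

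For part (b) I would split the argument into centrality and completeness. Centrality of $T(\alpha)$ for $\alpha \in \cS_\zeta$ uses two inputs. First, every peripheral loop may be pushed into a once-punctured-disk neighborhood of its puncture and is therefore disjoint from every other skein, so $T(\gamma) \in \ZF$ for all $\gamma \in \Sd$. Second, the Bonahon--Wong threading theorem \cite{BW2} gives that $T_m(C)$ is central whenever $\ord(\zeta) \not\equiv 0 \pmod 4$, and remains central in the case $\ord(\zeta) \equiv 0 \pmod 4$ provided $C$ is even. Applying the product-to-sum identity \eqref{eq.Che1} inductively reduces $T_{m l}(C)$ to a polynomial in $T_m(C)$, so for $\alpha = \gamma \cdot \beta^m \in \cS_\zeta$ the product $T(\alpha) = T(\gamma) \cdot T(\beta^m) = T(\gamma) \prod_j T_{m l_j}(C_j)$ is a product of central elements and therefore central.

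For completeness, I would take an arbitrary $z \in \ZF$, expand $z = \sum_{\alpha \in \cS} c_\alpha T(\alpha)$ in the basis of part (a), and show $c_\alpha = 0$ whenever $\alpha \notin \cS_\zeta$. The idea is that if some $\alpha$ in the support has a non-peripheral component $C$ whose multiplicity $l$ violates the divisibility by $m$ (or the parity condition when $4 \mid \ord(\zeta)$), then picking a loop $\delta$ transverse to $C$ and expanding $[\delta, z] = 0$ via the skein relations rescales $T_l(C)$ by a power of $\zeta$ determined by $l \cdot I(\delta, C)$ that is nontrivial precisely when $\alpha \notin \cS_\zeta$. The main obstacle is that conjugation by $\delta$ does not act diagonally on $\{T(\alpha)\}$, so isolating the obstruction requires a leading-term analysis compatible with both the Chebyshev change of basis and the Dehn--Thurston parametrization of $\cS$. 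This is exactly the role of the degree map built in Sections \ref{stabledt}--\ref{residue}: using it one identifies a unique leading $\alpha$ in the support violating the condition, derives a nonzero leading contribution to $[\delta, z]$, and contradicts $z \in \ZF$. Attempting instead to invoke Theorem \ref{thm.1} as a shortcut would be circular, since that dimension count is itself built on Theorem \ref{center}.
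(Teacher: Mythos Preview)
The paper does not prove Theorem \ref{center} at all: it is quoted from \cite{FKL} and used throughout the present paper as input. So there is no proof here to compare your proposal against; what I can do is point out where your sketch would and would not stand on its own.

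Your argument for (a) is correct and standard: $T_n$ is monic of degree $n$, so $T(\al)=\al+(\text{strictly fewer total components on the same support})$, and the change of basis from $\cS$ is unitriangular.

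For (b), two genuine problems. First, in the centrality half when $\ord(\zeta)\equiv 0\pmod 4$: you assert that $T_m(C)$ is central ``provided $C$ is even'', and then factor $T(\beta^m)=\prod_j T_{ml_j}(C_j)$ into central pieces. But the hypothesis is only that the \emph{whole} diagram $\beta$ is even; its individual components $C_j$ need not be null-homologous mod $2$, so $T_m(C_j)$ is in general \emph{not} central. What the Bonahon--Wong/Chebyshev--Frobenius map actually gives is that the product $\prod_j T_m(C_j)$ (the threading of $\beta$) is central when $\beta\in\Se$, not each factor separately. Your reduction of $T_{ml_j}(C_j)$ to a polynomial in $T_m(C_j)$ therefore does not finish the job; one must argue centrality for the product as a whole.

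Second, and more seriously, the completeness half is circular as written. You propose to isolate a bad leading term of a hypothetical central $z$ using ``the degree map built in Sections \ref{stabledt}--\ref{residue}.'' But the key property $\deg(\ZFs)\subset\bAz$ (Theorem \ref{r.gen1}, via Lemma \ref{r.l2}(b)) is proved in this paper by expanding $z$ in the basis $\{T(\al)\mid\al\in\cS_\zeta\}$ of $\ZF$ --- that is, by invoking Theorem \ref{center}(b) itself. So you cannot appeal to the degree map of Section \ref{residue} to establish (b). The actual proof in \cite{FKL} builds the requisite lead-term machinery and the commutator computation independently of (b); reproducing that is what is needed here.
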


The point is while $\KF$ has a $\BC$-basis parameterized by $\cS$, its center $\ZF$ has a basis parameterized by $\cS_\zeta$.

\begin{remark} The Chebyshev basis is important in the theory of quantum cluster algebras and quantum Teichm\"uller spaces of surfaces.
 It was first used, for the case when $F$ is a torus, by Frohman-Gelca \cite{FG}. The famous positivity conjecture states that the Chebyshev basis is positive \cite{Le, Th}. \end{remark}
 
 \def\fA{{\mathfrak A}}
 
 \subsection{Filtrations on skein algebras} \label{sec.filter}
Suppose $\fA=\{ a_1, \dots, a_k\}$, where each $a_i$  is an ideal arc or a loop on $F$. 

For each $n\in \BN$ define $F_n = F_n^\fA(\KF)$ to be the $\BC$-subspace of $\KF$ spanned by $\al\in \cS$ such that $\sum_{i=1}^k I(\al, a_i) \le n$. The following is well-known and its variants were used extensively in the study of skein algebras, see e.g., \cite{FKL, Le0,Le1}.
\begin{prop} \label{r.filt}
The sequence $\{ F_n\}_{n=0}^\infty$ is a filtration of $\KF$ compatible with the product.
\end{prop}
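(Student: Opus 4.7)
The plan is to verify the three conditions in the definition of a filtration compatible with the product. The inclusion $F_n \subset F_{n+1}$ is immediate from the definition, and $\bigcup_n F_n = \KF$ follows from Theorem \ref{r.basis} together with the fact that each simple diagram has finite geometric intersection number with each $a_i$. The substance of the proposition is the multiplicativity $F_m F_n \subset F_{m+n}$, and it suffices to show that whenever $\al,\beta \in \cS$ satisfy $\sum_i I(\al, a_i) \le m$ and $\sum_i I(\beta, a_i) \le n$, the product $\al\beta$ expands in the basis $\cS$ as a $\BC$-linear combination of simple diagrams $\gamma$ with $\sum_i I(\gamma, a_i) \le m+n$.

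First I would choose a single representative of $\al$ (respectively $\beta$) that is simultaneously in taut position with every $a_i$; this can be arranged by minimizing $\sum_i |\al \cap a_i|$ over all representatives, using a standard bigon-removal argument (and, for any $a_i$ that is an ideal arc, the same technique carried out relative to the punctures). After a small perturbation I may further assume that $\al$ and $\beta$ are transverse to one another and that each of the finitely many crossings of $\al$ and $\beta$ lies away from every $a_i$.

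Next I would evaluate $\al\beta$ in $\KF$ by stacking $\al$ above $\beta$ in $F\times[0,1]$ and applying the skein relation \eqref{KBSR} at each crossing between $\al$ and $\beta$, removing any trivial circles that appear using \eqref{KBSR2} at the cost of a scalar. The result is an expansion of $\al\beta$ as a $\BC$-linear combination of elements $\gamma \in \cS$. Each such $\gamma$ is obtained from $\al\cup\beta$ by a sequence of local smoothings at the crossings, together with deletions of trivial components; since every smoothing occurs in a small neighborhood of a crossing of $\al$ with $\beta$, and such a neighborhood has been arranged to be disjoint from every $a_i$, the total count satisfies $|\gamma \cap a_i| \le |\al\cap a_i| + |\beta\cap a_i|$ for each $i$. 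Using the tautness of the chosen representatives,
\[
I(\gamma, a_i) \;\le\; |\gamma \cap a_i| \;\le\; |\al\cap a_i| + |\beta\cap a_i| \;=\; I(\al, a_i) + I(\beta, a_i).
\]
Summing over $i$ gives $\sum_i I(\gamma, a_i) \le m+n$, so $\gamma \in F_{m+n}$, and hence $\al\beta \in F_{m+n}$.

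The main obstacle is the simultaneous minimal-position step: one must verify that $\al$ can be isotoped so that it realizes $I(\al, a_i)$ for \emph{all} $i$ at once (and similarly for $\beta$), and that $\al$ and $\beta$ can then be placed in transverse general position with crossings disjoint from the $a_i$. This is a standard surface-topology argument, and once it is in place the remainder of the proof is a straightforward bookkeeping of the Kauffman skein expansion.
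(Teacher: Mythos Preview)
Your proposal is correct and follows essentially the same approach as the paper's proof: expand $\al\beta$ via the skein relations and observe that each resulting smoothing $\gamma$ satisfies $I(\gamma,a_i)\le I(\al,a_i)+I(\beta,a_i)$. You are more careful than the paper in explicitly flagging the need for simultaneously taut representatives of $\al$ and $\beta$ with respect to all the $a_i$; the paper's proof leaves this implicit, but your observation that it is a standard bigon-removal fact is exactly what is needed.
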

\begin{proof} It is clear that $F_n\subset F_{n+1}$ and $\bigcup F_n = \KF$. It remains to show that $F_n F_l \subset F_{n+l}$. 
Suppose $\al, \beta\in \cS$ and $\al\in F_n, \beta \in F_l$. The product $\al\beta$ is obtained by placing $\al$ above $\beta$. 
Using the skein relation \eqref{KBSR} we see that $\al\beta= \sum c_\gamma \gamma$, where each $\gamma$ is a diagram obtained by a smoothing of all the crossings in $\al\beta$
 and hence $I(\gamma,a_i) \le I(\al, a_i) + I(\beta,a_i)$. It follows that $\alpha \beta \in F_{n+l}$.
\end{proof}

  \def\Az{\cA_\zeta}
\subsection{Coordinates and residues, open surface case} When $F= F_{g,p}$ has negative Euler characteristic, one can parameterize the set $\cS$ of simple diagrams on $F$ by embedding it into the free abelian group $\BZ^r$, where $r= 6g-6+3p$. This embedding depends on an object that we call the {\em coordinate datum}. In this subsection we describe this embedding  for open surfaces.

Suppose $F=F_{g,p}$ with $p \ge 1$ and  $F$ has negative Euler characteristic, $\chi(F)=2-2g-p$. 

By definition, a {\em coordinate datum} of $F$ is an {\em ordered ideal triangulation}, which is any sequence 
 $\{ e_1, \dots , e_r \}$ of disjoint ideal arcs on $F$ such that  no two  are isotopic. Here  $r= 6g-6+3p=-3\chi(F)$. Such  ideal triangulations always exists and we fix one of them.  The ideal arcs $(e_i)_{i=1}^r$ cut $F$ into triangles. 

 Recall that $I(\al, e_i)$ is the geometric intersection number. Define
$$
\nu : \cS \to \BN^r, \quad \text{by} \quad \nu (\al) := (\nu_i(\al))_{i=1}^r,\ \mbox{where}\ \nu_i(\al) : = I(\al, e_i).
$$

It is known that $\nu $ is injective, and its image 
 $\cA:= \nu (\cS)$ consists of all $(n_1,\dots, n_r) \in \BN^r$ such that 
 \be \label{eq.cA}
\text{whenever $e_i, e_j, e_k$ are edges of a triangle,  $n_i+ n_j + n_k$ is even and $n_i \le n_j + n_k$.}
\ee 
  We call $\nu (\al)$ the {\em edge-coordinates} of $\al$ with respect to the coordinate datum. Let $S: \cA \to \cS$ be the inverse of $\nu $. That is, $S(\bn)$ is the simple diagram whose edge coordinates are $\bn$. Note that $0 = \nu(\emptyset)\in \cA$, and $\cA$ is closed under addition. Hence $\cA$ is a submonoid of $\BZ^r$. For a submonoid $X$ of $\BZ^r$ let $\overline{X}$ be the   subgroup of $\BZ^r$ generated by $X$, then $\overline{X}= \{ x_1 - x_2 \mid x_1, x_2 \in X\}$.
\def\bX{ \overline{X}} 
  \begin{lemma}\label{r.surj}
 Let $X$ be a submonoid of $\BZ^k$ and $Y$ a subgroup of $\overline{X}$. If   $\bX/Y$ is finite then the  monoid homomorphism $\phi: X \to \bX/Y$ is surjective.
\end{lemma}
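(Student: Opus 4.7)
The plan is to show directly that the image $\phi(X) \subseteq \bX/Y$ is all of $\bX/Y$, by arguing that $\phi(X)$ is simultaneously a submonoid and a generating set.

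First I would observe that $\phi(X)$ is a submonoid of the finite group $\bX/Y$, since $X$ is a submonoid of $\BZ^k$ and $\phi$ is a monoid homomorphism (so $\phi(X)$ contains $0$ and is closed under addition). A classical and easy fact is that any submonoid $H$ of a finite group $G$ is actually a subgroup: for each $h \in H$, finiteness forces $h^m = h^n$ for some $m < n$, hence $h^{n-m} = e$, and therefore $h^{-1} = h^{n-m-1} \in H$. Applying this with $G = \bX/Y$, I conclude that $\phi(X)$ is a subgroup of $\bX/Y$.

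Next I would note that $\phi(X)$ generates $\bX/Y$ as a group. This is because $\bX = X - X$ by the very definition of $\bX$, so every element of $\bX/Y$ is a difference of two elements of $\phi(X)$ and in particular lies in the subgroup of $\bX/Y$ generated by $\phi(X)$. Combining the two steps, $\phi(X)$ is a subgroup of $\bX/Y$ that generates $\bX/Y$, so $\phi(X) = \bX/Y$, proving surjectivity.

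I do not anticipate a real obstacle; the only subtle point is the submonoid-to-subgroup step, and it is immediate from the finiteness of $\bX/Y$. As a sanity check, one can also give a direct construction: for $a = x_1 - x_2 \in \bX$ with $x_i \in X$, set $n = |\bX/Y|$ and $x = x_1 + (n-1)x_2 \in X$; then $x - a = n x_2 \in Y$, so $\phi(x) = [a]$, which gives surjectivity explicitly.
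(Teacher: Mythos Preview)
Your proof is correct and follows essentially the same approach as the paper's: show that $\phi(X)$ generates $\bX/Y$ (since $\bX = X - X$), observe that a finite submonoid of a group is a subgroup, and conclude $\phi(X) = \bX/Y$. The paper's proof is simply a more terse version of yours, and your additional explicit construction at the end is a pleasant bonus.
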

\begin{proof}
As $X$ generates $\bX$ as a group, $\phi(X)$ generates $\bX/Y$. The monoid $\phi(X)$, being finite, is a group. Hence $\phi(X)= \bX/Y$.
\end{proof}
 \begin{lemma} (a) $\bA$ is the subset of $\BZ^{r}$ consisting of $(n_1,\dots, n_r)$ such that 
 whenever $e_i, e_j, e_k$ are edges of a triangle,  $n_i+ n_j + n_k$ is even.

 (b) The index of $\bA$ in $\BZ^r$ is $2^{4g-5+2p}=2^{-2\chi(F)-1}$.
 \label{r.index1}
 \end{lemma}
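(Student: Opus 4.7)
My plan is to identify $\bA$ with the subgroup $B_0 := \{(n_i) \in \BZ^r : n_i + n_j + n_k \equiv 0 \pmod 2 \text{ for each triangle}\}$ and compute $[\BZ^r : B_0]$ directly; this would establish (a) and (b) together. The inclusion $\bA \subseteq B_0$ is immediate from the characterization of $\cA$ in \eqref{eq.cA}, since the parity condition is preserved under $\BZ$-linear combinations.

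To compute $[\BZ^r : B_0]$, I will view the ideal triangulation as a CW decomposition of the closed surface $\bF$: punctures are $0$-cells, ideal arcs are $1$-cells, and ideal triangles are $2$-cells, so $t := 4g - 4 + 2p$ equals the number of $2$-cells. The parity conditions assemble into a linear map $\psi : \BZ^r \to \BZ_2^{\,t}$ with $B_0 = \ker\psi$, and after reducing $\BZ^r$ modulo $2$, $\psi$ coincides with the cellular coboundary $\partial_2^* : C^1(\bF;\BZ_2) \to C^2(\bF;\BZ_2)$. Since $\bF$ is closed and orientable, $H^2(\bF;\BZ_2) \cong \BZ_2$, so $\rk_{\BZ_2}(\partial_2^*) = t - 1$, which gives $[\BZ^r : B_0] = 2^{t-1} = 2^{4g - 5 + 2p}$. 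The unique linear relation among the parity conditions is the obvious one, $\sum_T (n_i + n_j + n_k) = 2\sum_i n_i \equiv 0$.

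The reverse inclusion $B_0 \subseteq \bA$ is the substantive step, and I will split it in two. First, I will show $2\BZ^r \subseteq \bA$. For an edge $e_i$ with distinct endpoints $v_0 \neq v_1$, form the simple closed curve $\gamma_i \subset F$ that is the boundary in $F$ of a small closed disk neighborhood in $\bF$ of $e_i \cup \{v_0,v_1\}$. Then $\gamma_i$ bounds a twice-punctured disk in $\bF$, so it is non-trivial in $F$, and a direct local count yields $\nu_i(\gamma_i) = 0$ and $\nu_j(\gamma_i) = \deg_{v_0}(e_j) + \deg_{v_1}(e_j)$ for $j \neq i$, where $\deg_v(e_j)$ denotes the number of endpoints of $e_j$ at $v$. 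Since $\nu_j(\delta_v) = \deg_v(e_j)$ for the peripheral loop $\delta_v$ around $v$, this rearranges to $2\mathbf{u}_i = \nu(\delta_{v_0}) + \nu(\delta_{v_1}) - \nu(\gamma_i) \in \bA$, where $\mathbf{u}_i$ is the $i$-th standard basis vector of $\BZ^r$. When $e_i$ is a loop-edge based at a single puncture $v$, I instead use the two-component boundary $\gamma_i$ of an annular neighborhood of $e_i \cup \{v\}$ in $\bF$; both components are non-trivial in $F$ (they are parallel copies of $e_i$), and an analogous count gives $\nu(\gamma_i) = \nu(\delta_v) - 2\mathbf{u}_i$, again yielding $2\mathbf{u}_i \in \bA$.

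For the second step I will show that the reduction $\nu \pmod 2 : \cA \to B_0/2\BZ^r$ is surjective. Under the identification $B_0/2\BZ^r \cong Z^1(\bF;\BZ_2)$, the short exact sequence $0 \to B^1(\bF;\BZ_2) \to Z^1 \to H^1(\bF;\BZ_2) \to 0$ decomposes the target. The coboundary subspace $B^1$ has $\BZ_2$-dimension $p - 1$ and is spanned by the images of peripheral loops, because $[\nu(\delta_v)] \equiv \partial_1^*(v^*) \pmod 2$. For the $2g$-dimensional quotient $H^1(\bF;\BZ_2)$, I will pick $2g$ disjoint simple closed curves $C_1,\dots,C_{2g}$ on $F$ representing a symplectic $\BZ_2$-basis of $H_1(\bF;\BZ_2)$; by Poincar\'e duality on $\bF$, the classes $[\nu(C_k) \pmod 2] \in H^1(\bF;\BZ_2)$ form a basis. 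Combining Steps 1 and 2, for any $b \in B_0$ there is $\alpha \in \cS$ with $b - \nu(\alpha) \in 2\BZ^r \subseteq \bA$, so $b \in \bA$. The main obstacle is the geometric content of Step 1: verifying that the curves $\gamma_i$ are non-trivial (in particular that both components are non-trivial in $F$ in the loop-edge case) and making the intersection-number count rigorous via a local picture near each puncture on $\partial \gamma_i$. Step 2 is cleaner in spirit but requires confirming that the abstract Poincar\'e duality isomorphism is realized geometrically by the map $[C] \mapsto [\nu(C) \pmod 2]$.
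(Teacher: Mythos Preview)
Your computation of the index $[\BZ^r:B_0]$ is correct and is essentially the paper's argument: both identify $B_0/2\BZ^r$ with the $\BZ_2$-cocycles $Z^1(\bF;\BZ_2)$ of the cellular decomposition and then read off the size from the cochain complex. The paper phrases it via the exact sequence $0\to H^0\to C^0\to Z^1\to H^1\to 0$, while you compute $\mathrm{rk}\,\partial_2^*=t-1$ from $H^2(\bF;\BZ_2)\cong\BZ_2$; these are the same count.

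Where you diverge from the paper is in the inclusion $B_0\subseteq\bA$, which you flag as ``the substantive step.'' It is not: the paper dispatches it in one line, and here is why. The constant vector $2\mathbf 1=(2,\dots,2)$ lies in $\cA$ (parity $2+2+2=6$, triangle inequality $2\le 2+2$). Given any $\bn\in B_0$, choose $k$ large enough that every coordinate of $\bn+2k\mathbf 1$ is nonnegative and every triangle inequality $n_i\le n_j+n_k+2k$ holds; then $\bn+2k\mathbf 1\in\cA$, and $\bn=(\bn+2k\mathbf 1)-2k\mathbf 1\in\bA$. So the ``main obstacle'' you identify evaporates, and your two-step geometric construction (building curves $\gamma_i$ to get $2\mathbf u_i\in\bA$, then realizing $Z^1$ by peripheral loops and a symplectic basis) is unnecessary.

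Your construction does appear to go through, but it needs more care than you give it. In the loop-edge case, your parenthetical ``they are parallel copies of $e_i$'' is wrong as stated---$e_i$ is an ideal arc, not a loop---and the non-triviality of the two boundary components $\gamma_i^\pm$ in $F$ requires an argument (one must rule out that $\bar e_i$ bounds a puncture-free disk in $\bF$, which follows from an Euler-characteristic count on the induced triangulation of such a disk). None of this is fatal, but it is work you can avoid entirely with the one-line translation argument above.
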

\begin{proof} (a) follows from the description \eqref{eq.cA} of $\cA$.

(b) Consider the triangulation as a cellular decomposition of $\bF$ which has $p$ zero-cells and $r$ one-cells.  Identify $(\BZ_2)^r$ with the set of all maps from one-cells to $\BZ_2$, and let $C_1\subset $ be the set of all one-cocycles. Let $f: \BZ^r \to (\BZ_2)^r$ be the reduction modulo 2, then $\bA = f^{-1}(C_1)$. Therefore $\BZ^r / \bA \cong (\BZ_2)^r/C_1$ and hence
\be 
[\BZ^r : \bA] =  |(\BZ_2)^r| /|C_1| = 2^r /|C_1|. \label{eq.eq4}
\ee
The following  sequence is exact
$$ 0 \to H_0(\bF,\BZ_2) \to C_0 \overset \delta \longrightarrow C_1 \to H_1(\bF, \BZ_2) \to 0,$$
where $C_0= (\BZ_2)^p$ is the set of all 0-cochains. 
In an exact sequence of finite groups, the alternating product of orders of groups is 1. Hence 
$$ |C_1| = \frac{|C_0| |H_1(\bF,\BZ_2)|}{|H_0(\bF,\BZ_2)|}= \frac{2^p 2^{2g}}{2} = 2^{2g+p-1}.$$
Plugging this value of $|C_1|$ in \eqref{eq.eq4}, we get the result.
\end{proof}

 The following follows easily from the definition.
 \begin{prop}  \label{r.prod}
 If $\al, \beta\in \cS$ and $I(\al,\beta) =0$, then $\al\beta \in \cS$ and
 \be 
 \nu(\al \beta) = \nu(\al) + \nu(\beta).
 \ee
 \end{prop}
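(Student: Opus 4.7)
The plan is to handle the two assertions in turn. Since $I(\al,\beta)=0$, choose disjoint simple-diagram representatives $\al'$ and $\beta'$ of the isotopy classes. Their union $\gamma := \al'\cup\beta'$ is a disjoint union of non-trivial simple closed curves, hence a simple diagram. Placing $\al'$ in $F\times(1/2,1)$ and $\beta'$ in $F\times(0,1/2)$ yields a framed link with no crossings, so the skein product $\al\cdot\beta$ is represented by $\gamma$, giving $\al\beta=[\gamma]\in\cS$. This settles the first claim.

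For the coordinate formula, it is enough to prove $I(\gamma,e_i)=I(\al,e_i)+I(\beta,e_i)$ for every edge $e_i$ in the chosen triangulation. The inequality ``$\ge$'' is immediate once one knows $\gamma$ admits a disjoint representative $(\al',\beta')$ that is taut with $e_i$: then $I(\gamma,e_i)=|\al'\cap e_i|+|\beta'\cap e_i|\ge I(\al,e_i)+I(\beta,e_i)$. To obtain the opposite inequality I would argue by an innermost-bigon reduction. Among all disjoint representative pairs $(\al',\beta')$ transverse to $e_i$, pick one minimizing $|\gamma\cap e_i|$ where $\gamma:=\al'\cup\beta'$. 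If $\gamma$ had a bigon with $e_i$, take an innermost one $D$. Its $\gamma$-boundary is a single embedded arc lying in one component of $\gamma$ (components are disjoint embedded loops), and by innermost-ness the interior of $D$ is disjoint from \emph{all} of $\gamma$ — in particular, from the diagram not containing the bigon arc. Pushing the bigon arc across $D$ reduces $|\gamma\cap e_i|$ by two while preserving disjointness of $\al'$ and $\beta'$, contradicting minimality. Hence $\gamma$ is taut with $e_i$, and applying the same bigon-free conclusion to the sub-multicurves gives $|\al'\cap e_i|=I(\al,e_i)$ and $|\beta'\cap e_i|=I(\beta,e_i)$, yielding the desired equality.

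The only step needing care is the innermost-bigon reduction, where ``innermost'' must be taken with respect to all arcs of $\gamma$, not merely those in the component carrying the bigon; this is precisely what ensures the push preserves disjointness of $\al'$ and $\beta'$. Given that, assembling both inequalities gives $I(\gamma,e_i)=I(\al,e_i)+I(\beta,e_i)$ for each $i$, i.e.\ $\nu(\al\beta)=\nu(\al)+\nu(\beta)$, completing the proof.
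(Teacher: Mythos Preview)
Your proof is correct. The paper itself offers no argument beyond the sentence ``The following follows easily from the definition,'' so there is nothing to compare against; your bigon-reduction argument is exactly the kind of detail one supplies to justify that sentence.

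A minor remark on presentation: your split into ``$\ge$'' and ``the opposite inequality'' is slightly awkward, since the innermost-bigon step in fact establishes equality outright. Once you have chosen a disjoint pair $(\al',\beta')$ minimizing $|\gamma\cap e_i|$, the absence of bigons (for $\gamma$ with $e_i$) forces each of $\al'$ and $\beta'$ to be individually bigon-free with $e_i$, hence individually taut; this gives $I(\gamma,e_i)=|\al'\cap e_i|+|\beta'\cap e_i|=I(\al,e_i)+I(\beta,e_i)$ in one stroke. You might also remark that the bigon criterion applies equally well when one of the two objects is an ideal arc rather than a closed curve, which is the situation here with $e_i$; this is standard but worth a word. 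Otherwise the argument is clean, and your care in taking the innermost bigon with respect to all of $\gamma$ (not just the component carrying the bigon arc) is exactly the point that makes the disjointness of $\al'$ and $\beta'$ survive the push.
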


Let $\Ae:= \nu (\cS^\ev), \Ad= \nu (\Sd), \Az = \nu  (\cS_\zeta)$.  Each of  $\cA, \Ae, \Ad, \Az$ is a submonoid of $\BN^r \subset \BZ^r$.


\def\So{\cS^{\bullet}}
\begin{prop}\label{quotmon} (a)  The group $\bAd$ is a direct summand of $\bA$.

(b) The quotient $\bA/\bAe$ is isomorphic to $\mathbb{Z}_2^{2g}$.

\end{prop}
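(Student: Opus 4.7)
For part (a), the plan is to construct a $\BZ$-linear retraction $\pi : \bA \to \bAd$ that splits the inclusion $\bAd \hookrightarrow \bA$, forcing the desired direct sum decomposition. For each puncture $v \in \cV$, fix a corner of the triangulation at $v$ belonging to some triangle with edges $e_a, e_b$ at $v$ and opposite edge $e_d$; write $u_a, u_b \in \cV$ for the endpoints of $e_d$ (equivalently, the other endpoints of $e_a, e_b$). Define the functional
\[
R_v(\alpha) := \tfrac{1}{2}(\alpha_a + \alpha_b - \alpha_d),
\]
which is integer-valued on $\bA$ by the triangle parity condition (Lemma \ref{r.index1}(a)). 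For the peripheral loop $\partial_{v'}$ one computes $\alpha_a = [v = v'] + [u_a = v']$, $\alpha_b = [v = v'] + [u_b = v']$, and $\alpha_d = [u_a = v'] + [u_b = v']$; the cross-terms in $u_a, u_b$ cancel, leaving $R_v(\nu(\partial_{v'})) = [v = v'] = \delta_{v,v'}$. Choosing such corners at each puncture, the map $\pi := (R_v)_{v \in \cV} : \bA \to \BZ^{\cV}$ restricts to the identity on $\bAd$ under the basis $\{\nu(\partial_v)\}_v$, so $\bA = \bAd \oplus \ker \pi$.

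For part (b), the plan is to identify $\bA/\bAe$ with $H_1(\bar F; \BZ_2) \cong \BZ_2^{2g}$. Consider the cellular decomposition of $\bar F$ dual to the triangulation: 0-cells at triangle centers, 1-cells $e_i^*$ dual to $e_i$, and 2-cells $v^*$ small disks around the punctures. In taut position, a simple loop $\al$ pushes onto the dual 1-skeleton traversing $e_i^*$ exactly $\nu_i(\al)$ times (mod 2, after forgetting signs), and the triangle parity condition defining $\bA$ is precisely the 1-cycle condition at each dual 0-cell. Mod-2 reduction of edge coordinates thus descends to a group homomorphism
\[
\psi : \bA \longrightarrow Z_1^{\mathrm{dual}}(\bar F;\BZ_2) \twoheadrightarrow H_1(\bar F;\BZ_2) \cong \BZ_2^{2g},
\]
with $\psi(\nu(\al)) = [\al]_2$ for all $\al \in \cS$. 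Surjectivity follows since every mod-2 homology class on $\bar F$ is realized by a simple loop in $F$.

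For the kernel, the inclusion $\bAe \subseteq \ker\psi$ is immediate from the definition of ``even.'' For the converse, given $\alpha \in \ker\psi$, use that $\cA$ is a monoid closed under addition (so $\bA = \cA - \cA$) to write $\alpha = \nu(\al) - \nu(\beta)$ with $\al, \beta \in \cS$; then $[\al]_2 = [\beta]_2$. Using closure again, pick $\gamma \in \cS$ with $\nu(\gamma) = \nu(\al) + \nu(\beta)$. The homomorphism property of $\psi$ forces $[\gamma]_2 = [\al]_2 + [\beta]_2 = 0$, so $\gamma \in \Se$. Since $\alpha = \nu(\gamma) - \nu(2\beta)$ and $2\beta$ is automatically even (doubling kills $H_1(-;\BZ_2)$), both terms are in $\Ae$, and $\alpha \in \bAe$. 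Hence $\bA/\bAe \cong \BZ_2^{2g}$.

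The main obstacle is the geometric setup in part (b): verifying that $\nu(\al) \bmod 2$ really is the cellular 1-cycle representing $[\al]_2$ in the dual complex, and that the boundary of each dual 2-cell $v^*$ equals $\nu(\partial_v) \bmod 2$. Both reduce to elementary endpoint-counting but require care when an edge has both endpoints at the same puncture. Once the identification is in place, (a) becomes a one-line computation and the kernel argument in (b) is formal.
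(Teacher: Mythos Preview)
Your proof is correct, and both parts take somewhat different routes from the paper.

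For part (a), the paper does not build a retraction. Instead it shows that $\bAd$ is \emph{primitive} in $\bA$ (if $kx\in\bAd$ with $k>0$ and $x\in\bA$, then $x\in\bAd$), using the unique decomposition of a simple diagram into its peripheral and non-peripheral parts together with Proposition~\ref{r.prod}. Your corner-functional retraction $R_v(\alpha)=\tfrac12(\alpha_a+\alpha_b-\alpha_d)$ is a genuinely different and more explicit argument; the computation $R_v(\nu(\partial_{v'}))=\delta_{v,v'}$ is correct and in fact survives the self-folded case (there the two edges at $v$ coincide, giving $R_v=\alpha_f-\tfrac12\alpha_e$, and the triangle parity forces $\alpha_e$ even). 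Your construction has the advantage of yielding an explicit complement $\ker\pi$, whereas the paper's primitivity argument is coordinate-free and needs no case analysis on triangle types.

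For part (b), both proofs produce the surjection $\bA\to H_1(\bar F;\BZ_2)$ with kernel $\bAe$. The paper simply composes $\cA\xrightarrow{S}\cS\xrightarrow{h}H_1(\bar F;\BZ_2)$, asserts this is a monoid homomorphism, and declares the kernel to be $\bAe$ ``by definition.'' Your dual-cell realization makes the homomorphism property transparent (mod-2 edge coordinates are literally dual 1-cycles), and your kernel computation---writing $x=\nu(\gamma)-\nu(\beta^2)$ with $\gamma,\beta^2\in\Se$---actually supplies the step the paper elides, namely why an element of $\ker\bar h$ that is only a \emph{difference} of elements of $\cA$ still lies in the group generated by $\Ae$.
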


\def\bh{\bar h}
\proof  (a) 
The group $\bAd$ is a direct summand of $\bA$ if and only if it is {\em primitive} in the sense that \\
(*) if $kx \in \bAd$, where $k$ is a positive integer and $ x\in \bA$, 
then $x\in \bAd$. 

Let $\So\subset \cS$ be the set of all simple diagrams containing no peripheral loops, with the convention that the empty diagram is in $\cS^\bullet$, and let $\Ao= \nu(\So)$.  
Every simple diagram can be presented in a unique way as the product of an element in $\Ad$ and an element in $\Ao$. By Proposition \ref{r.prod}  every $x\in \cA$ can be presented uniquely as
\be 
x = x^\bullet + x ^\partial, \quad \text{where} \ x^\bullet \in \Ao, x^\partial \in\Ad.
\label{eq.decomp1}
\ee

Suppose $x\in \bA$ satisfies (*). There are $x_1, x_2 \in \cA$ such that $x = x_1 - x_2$. Since $kx\in \bAd$, there are $y_1, y_2 \in \Ad$ such that $k(x_1- x_2) = y_2 -y_1$. Using the decomposition \eqref{eq.decomp1} for $x_1$ and $x_2$, we get
$$ k x^\bullet_1 + (k x^\partial_1 + y_1) = k x^\bullet_2 + (k x^\partial_2 + y_2).$$
By Proposition \ref{r.prod}, $k x^\bullet_1,  k x^\bullet_2 \in \Ao$.  The uniqueness of  \eqref{eq.decomp1} shows that $k x^\bullet_1 = k x^\bullet_2$, or 
$  x^\bullet_1 =  x^\bullet_2$. Then $x= x_1 - x_2 = x^\partial_1 - x^\partial_2 \in \bAd$, proving (a).

(b) The composition $ \cA \overset {S} \longrightarrow \cS \overset {h} \longrightarrow H_1(\bF,\BZ_2)$, where $h(\al)$ is the homology class of $\al$, is a surjective monoid homomorphism and extends to a surjective group homomorphism $\bh: \bA \to H_1(\bF,\BZ_2)$.  By definition, $\ker \bh = \bAe$. Hence $\bA/\bAe\cong 
H_1(\bF,\BZ_2)\cong \BZ_2^{2g}$.\qed

\def\Res{\mathfrak{R}}
\def\Rz{\mathfrak{R}_\zeta}
\def\bAz{\overline {\Az}}


Suppose $\zeta$ is a root of 1. From  Theorem \ref{center} and the bijection $\nu:\cS \to \cA$, we see that the center $\ZF$ has a $\BC$-basis parameterized by $\Az$ while $\KF$ has a basis parameterized by $\cA$. Hence we want to study the quotient $\cA/\Az$. Define the $\zeta$-residue group
\be \label{resdef} \Rz(F)= \bA/\bAz \ee
which  depends on a coordinate datum of $F$.
Let  $m = \ord(\zeta^4)$. Define the following integer
\be 
D_\zeta(F)= \begin{cases} m^{6g-6+2p} \quad & \text{if} \quad \ord(\zeta) \neq 0 \mod 4 \\
2^{2g} m^{6g-6+2p} &\text{if} \quad \ord(\zeta)  = 0 \mod 4 \end{cases}.
\ee

\def\hA{\hat \cA}
\begin{prop}\label{r.dim1}
 Let $F=F_{g,p}$ have negative Euler characteristic and $p\ge 1$.  Let $\zeta$ be a root of 1. 
 For any coordinate datum of $F$, 
\be 
|\Rz(F)| =  D_\zeta(F).
\ee

 \end{prop}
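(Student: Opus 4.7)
The plan is to reduce the computation of $|\Rz(F)|$ to a short calculation with free abelian groups, by first giving a concrete description of the subgroup $\bAz$ inside $\bA$, and then dividing out by the peripheral direct summand.

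First I would identify $\bAz$ explicitly. An element of $\cS_\zeta$ has the form $\al\beta^m$ where $\al\in\Sd$ and $\beta\in\cS$ (or $\beta\in\Se$ in the $\ord(\zeta)\equiv 0 \pmod 4$ case). Since a peripheral simple diagram can be pushed into collar neighborhoods of the punctures, $I(\al,\beta)=0$ whenever $\al\in\Sd$, so by Proposition \ref{r.prod} one has $\nu(\al\beta^m)=\nu(\al)+m\,\nu(\beta)$. Hence $\Az=\Ad+m\cA$ in the first case and $\Az=\Ad+m\Ae$ in the second. Passing to generated subgroups, I get
\[
\bAz=\bAd+m\bA\quad\text{or}\quad \bAz=\bAd+m\bAe,
\]
according to whether $\ord(\zeta)\not\equiv 0\pmod 4$ or $\ord(\zeta)\equiv 0\pmod 4$.

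Next I would set $H:=\bA/\bAd$. By Proposition \ref{quotmon}(a), $\bAd$ is a direct summand of $\bA$, and since $\bA$ has finite index in $\BZ^r$ (Lemma \ref{r.index1}), $\bA$ is free of rank $r=6g-6+3p$. Thus $H$ is a free abelian group, and I need to compute its rank. The submonoid $\cS^\partial$ is generated, under disjoint union, by the $p$ isotopy classes $c_1,\dots,c_p$ of peripheral loops (one per puncture). For a puncture $v$, the edge coordinates of $c_v$ are $2$ on each edge of the triangulation incident to $v$ and $0$ on all others; a short argument using connectedness of the dual graph of the triangulation shows that the $p$ vectors $\nu(c_1),\dots,\nu(c_p)$ are linearly independent in $\BZ^r$. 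Therefore $\bAd$ has rank $p$, and so $H$ is free of rank
\[
s:=r-p=6g-6+2p.
\]

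Now the computation finishes quickly. In the first case, the projection $\bA\to H$ is surjective, so the image of $m\bA$ in $H$ is $mH$, and consequently $\bA/\bAz\cong H/mH$, giving $|\Rz(F)|=m^{s}=m^{6g-6+2p}$. In the second case, setting $G:=\bAe/\bAd\subset H$, the image of $m\bAe$ in $H$ is $mG$, so $\bA/\bAz\cong H/mG$. The subgroup $G$ has the same rank $s$ as $H$, and Proposition \ref{quotmon}(b) gives $[H:G]=[\bA:\bAe]=2^{2g}$. From the short exact sequence
\[
0\longrightarrow G/mG\longrightarrow H/mG\longrightarrow H/G\longrightarrow 0
\]
and freeness of $G$, I conclude
\[
|\Rz(F)|=|H/mG|=|G/mG|\cdot|H/G|=m^{s}\cdot 2^{2g}=2^{2g}\,m^{6g-6+2p},
\]
matching $D_\zeta(F)$. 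The main obstacle is purely bookkeeping: verifying that $\bAd$ has rank exactly $p$ (so that $H$ has the right rank) and unwinding the images of $m\bA$ and $m\bAe$ in $H$ correctly; everything else reduces to the structure theorem for finitely generated abelian groups together with the two parts of Proposition \ref{quotmon}.
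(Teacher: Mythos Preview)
Your proof is correct and follows essentially the same approach as the paper: both identify $\bAz$ as $\bAd + m\bA$ (resp.\ $\bAd + m\bAe$), use Proposition~\ref{quotmon}(a) to conclude $\bA/\bAd$ is free abelian of rank $6g-6+2p$, and then finish with elementary abelian-group counts. Your handling of Case~2 via the short exact sequence $0\to G/mG\to H/mG\to H/G\to 0$ is a minor repackaging of the paper's factorization $|\bA/(\bAd+m\bAe)|=|\bA/\bAe|\cdot|\bAe/(\bAd+m\bAe)|$; one small slip is that $\nu(c_v)$ has entry equal to the number of ends of $e_i$ at $v$ (so $1$ or $2$), not always $2$, but linear independence still follows since the $c_v$ are disjoint and non-isotopic.
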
 

 \proof  Case 1: $\ord(\zeta)  \neq 0 \mod 4$. Then $\Az = \Ad + m \cA$, and
 $$ \Rz(F) = \frac{\bA}{\bAd + m \bA} \cong \frac{\bA/\bAd}{ (\bAd + m \bA)/ \bAd } \cong \frac{\bA/\bAd}{ m (\bA/ \bAd) } .$$
 Note that  $\rk \bA = 6g-6 + 3p$  while  $\rk \bAd =p$. Since $\bAd$ is a direct summand of $\bA$ by Proposition \ref{quotmon}, the group $\bA/\bAd$ is free abelian of rank $6g-6+2p$. It follows that $\frac{\bA/\bAd}{ m(\bA/\bAd)}$ has cardinality $m^{6g-6+2p}$.

Case 2: $\ord(\zeta)=   0 \mod 4$. Then $\Az = \Ad + m \Ae$. Since $\Ad \subset \Ae$, the same argument as in Case 1 with $\cA$ replaced by $\cA^\ev$ gives
$$ |\bAe /(\bAd + m \bAe)| =  m^{6g-6+2p}. $$

By Proposition \ref{quotmon}, we have $ \left | \cA / \cA^\ev   \right | = 2^{2g}$. Hence
$$|\Rz(F)|= \left | \frac{\bA}{ \bAd + m \bAe}   \right |   =  \left |  \frac{\bA}{\bAe}    \right | \, \left |    \frac{\bAe}{ \bAd + m \bAe}  \right | = 2^{2g} m^{{6g-6+2p}} .$$
In all cases we have $|  \Rz(F)|= D_\zeta(F)$.
\qed

We use the collection $\fA=\{e_1,\dots,e_r\}$ to define the filtrations $F_k= F_k^\fA(\KF)$, as described in Subsection \ref{sec.filter}. That is, $F_k$ is the $\BC$-subspace spanned by $S(\bn), \bn\in \cA$ such that $|\bn|:= \sum_{i=1}^r n_i \le k$.
\begin{prop} \label{r.prod1}
For $\bn, \bn'\in \cA$ there exists $j(\bn,\bn')\in \BZ$ such that
\be
S(\bn) S(\bn') = \zeta^{ j(\bn,\bn')}  S(\bn+\bn') \pmod F_{|\bn| + |\bn'| -1}.
\ee
\end{prop}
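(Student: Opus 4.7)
My plan is to compute $S(\bn)\,S(\bn')$ directly in $\KF$ using taut representatives and the Kauffman skein relation, and to isolate the unique smoothing contributing to the top filtration level. First, I realize $\al := S(\bn)$ and $\beta := S(\bn')$ by simple-diagram representatives in $F$ that are simultaneously taut with every $e_i$, taut with each other, and whose $k = I(\al,\beta)$ transverse crossings lie in the complement of $\bigcup_i e_i$. Such representatives can be obtained, for instance, by taking geodesics in a complete hyperbolic metric on $F$ in which the $e_i$ are geodesic as well. Placing $\al$ in $F\times(\tfrac12,1)$ and $\beta$ in $F\times(0,\tfrac12)$, the projection to $F$ exhibits exactly $k$ transverse crossings, and iterated application of \eqref{KBSR} at each yields
\[
S(\bn)\,S(\bn') \;=\; \sum_\sigma \zeta^{\varepsilon(\sigma)}\,\widetilde\gamma_\sigma ,
\]
indexed by the $2^k$ smoothing patterns $\sigma$, where $\varepsilon(\sigma)\in\{-k,-k+2,\dots,k\}$ is the signed count of the two smoothing types and $\widetilde\gamma_\sigma\subset F$ is the resulting 1-submanifold. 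Any trivial loops appearing in $\widetilde\gamma_\sigma$ are absorbed via \eqref{KBSR2} into a scalar $(-\zeta^2-\zeta^{-2})^{t_\sigma}$, leaving a simple diagram $\gamma_\sigma\in\cS$.

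For the filtration bound, since each crossing of $\al\cap\beta$ lies off every $e_i$, no smoothing changes the set-theoretic intersection with any $e_i$, so $|\widetilde\gamma_\sigma\cap e_i|=n_i+n'_i$. Isotoping $\gamma_\sigma$ into minimal position with each $e_i$ can only decrease this count, giving $I(\gamma_\sigma,e_i)\le n_i+n'_i$ for every $i$; hence $|\nu(\gamma_\sigma)|\le|\bn|+|\bn'|$ and Proposition \ref{r.filt} places each $\gamma_\sigma$ in $F_{|\bn|+|\bn'|}$.

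The crux is to identify a unique smoothing $\sigma_0$ attaining the top filtration level. The strategy is that at each crossing $p$ there is exactly one ``correct'' local resolution --- the one that preserves tautness with every $e_i$ meeting a neighborhood of $p$; the alternative choice traps an arc of some nearby $e_i$ in a bigon which can be isotoped away, strictly reducing some $I(\gamma_\sigma,e_i)$. Let $\sigma_0$ denote the globally consistent correct pattern. Because the taut pair $(\al,\beta)$ has no bigons, $\widetilde\gamma_{\sigma_0}$ itself contains no trivial loops, so $t_{\sigma_0}=0$ and $I(\gamma_{\sigma_0},e_i)=n_i+n'_i$ for every $i$; this forces $\nu(\gamma_{\sigma_0})=\bn+\bn'$ and, by injectivity of $\nu$, $\gamma_{\sigma_0}=S(\bn+\bn')$. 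Setting $j(\bn,\bn'):=\varepsilon(\sigma_0)\in\BZ$ then yields the claimed identity modulo $F_{|\bn|+|\bn'|-1}$. The main obstacle is this last step: rigorously verifying that exactly one smoothing pattern is globally correct and that every other pattern contributes only to the strictly lower filtration. This requires a careful combinatorial bigon analysis at each crossing, together with an argument ruling out the possibility that several bad smoothings conspire, via the trivial-loop factors $(-\zeta^2-\zeta^{-2})^{t_\sigma}$, to produce a net top-degree contribution.
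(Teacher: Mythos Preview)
Your approach is exactly the standard one the paper invokes by citing \cite{AF,FKL}; the paper itself gives no independent argument. The outline is correct, and both worries you flag dissolve on closer inspection.

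The ``conspiracy'' concern is empty: once each $\gamma_\sigma$ with $\sigma\ne\sigma_0$ is shown to lie in the \emph{subspace} $F_{|\bn|+|\bn'|-1}$, any $\BC$-linear combination of them---with whatever scalars the trivial-loop relation \eqref{KBSR2} supplies---remains there. For the crossing analysis, work triangle-by-triangle rather than bigon-by-bigon. Tautness of $\al,\beta$ with each $e_i$ means that inside every ideal triangle each of them is a union of normal arcs (joining two distinct sides), and tautness of $\al$ with $\beta$ forces any two such arcs to meet at most once in a given triangle. At such a crossing exactly one of the two resolutions returns two normal arcs; the other produces an arc with both endpoints on a single side of the triangle, which can be isotoped across the adjacent corner to drop the intersection with that edge by $2$. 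Hence any $\sigma\ne\sigma_0$ makes at least one wrong choice and lands in $F_{|\bn|+|\bn'|-2}\subset F_{|\bn|+|\bn'|-1}$. For $\sigma_0$ the result consists entirely of normal arcs in every triangle; since $\bn+\bn'\in\cA$ and the simple diagram in normal position with prescribed admissible edge-intersection numbers is unique up to isotopy, $\gamma_{\sigma_0}=S(\bn+\bn')$ and no trivial components occur.
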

This was proved in \cite{AF,FKL} for a slightly different filtration but the easy proof there works also for this case. In \cite{FKL}, an explicit formula for $j(\bn,\bn')$ is given.

\subsection{Coordinates and residues, closed surface case} \label{sec.Coor2}

Let $F$ be a closed oriented surface of genus $g>1$. A {\em coordinate datum} of $F$  consists of an ordered pants decomposition $\cP$ and a dual graph $\cD$ defined as follows.

{\em An ordered pants decomposition} of $F$ is a sequence
 $\cP=(P_1, \dots, P_{3g-3})$ of disjoint non-trivial loops  such that no two of them are isotopic. The collection $\cP$ cuts $F$ into $2g-2$ pairs of pants (i.e., thrice punctured spheres).
A {\em dual graph $\cD$ to $\cP$} is a trivalent graph embedded into $F$ having exactly $2g-2$ vertices, one in the interior of each pair of pants, and $3g-3$ edges $(e_{i})_{i=1}^{3g-3}$ such that $e_i$ intersects $P_i$ transversally in a single point  and is disjoint with $P_j$ for $j\neq i$.

For technical simplicity we assume that $\cD$ does not have an edge with endpoints in the same vertex, in other words each pair of pants has 3 different boundary components. Such a coordinate datum always exists, and we fix one.
 
 
 Let $N(\cD)$ be a regular neighborhood of $\cD$ and $\Omega=\partial N(\cD)$ be its boundary. We assume  that for each pair of pants $C$ the intersection $C \cap N(\cD)$ is a regular neighborhood (in $C$) of $\cD \cap C$, and $\Omega\cap C$ consists of 3 arcs as in Figure \ref{ppants}. We call  $C \cap N(\cD)$ the red hexagon of $C$.
 
\begin{figure}[H] \begin{center} \scalebox{.33}{\includegraphics{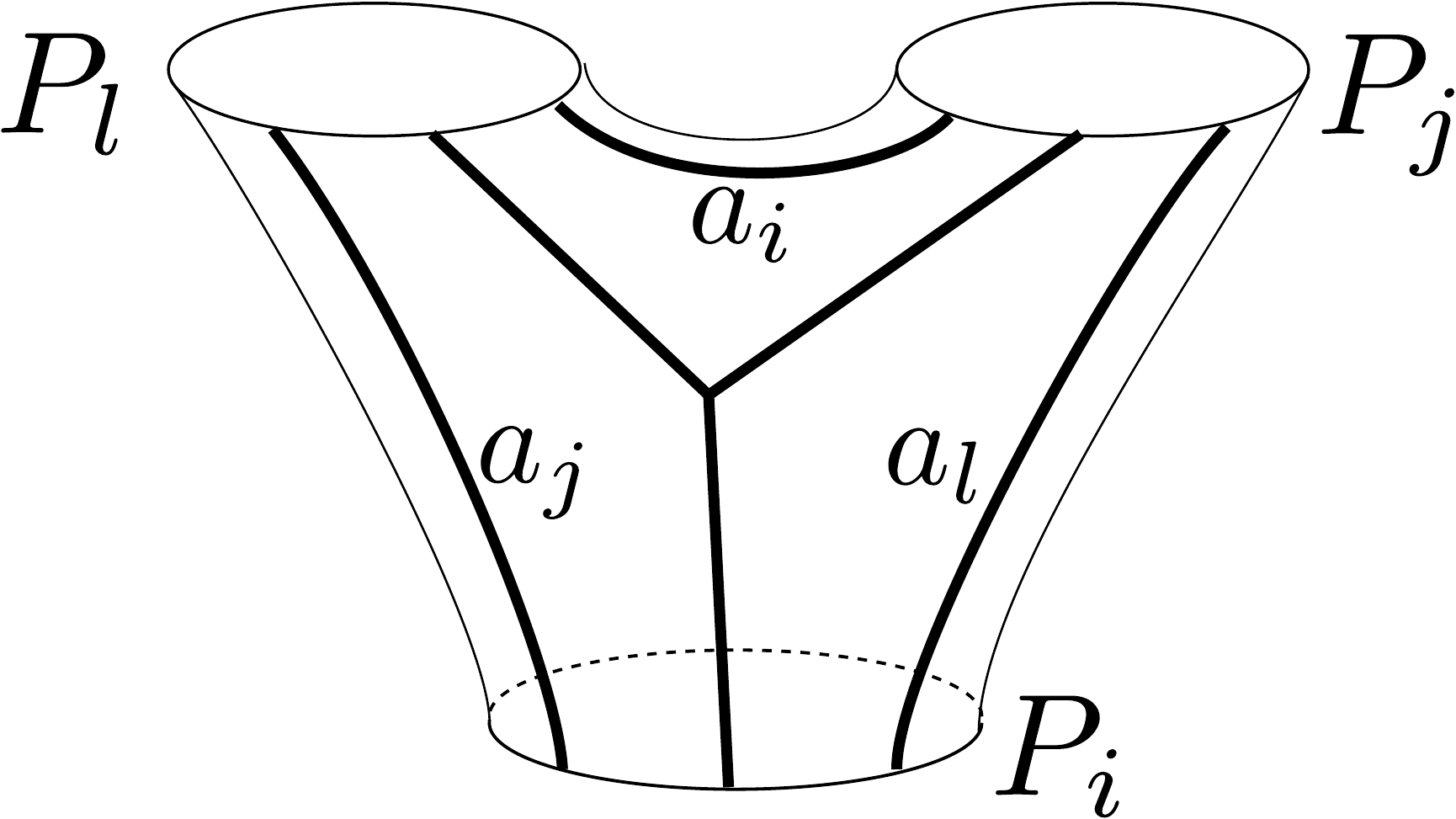}} \end{center}
\caption{The pair of pants $C$ bounded by loops $P_i, P_j, P_l$. The trivalent graph is $\cD\cap C$. The  bold arcs $a_i$, $a_j$, $a_l$ are $\Omega\cap C$. The red hexagon contains the trivalent graph and is bounded by $a_i$, $a_j$, $a_l$ and parts of $P_i, P_j,P_l$.}\label{ppants} \end{figure}

  The curves $\{P_i\}_{i=1}^{3g-3}$ and the system of red hexagons allow to define the Dehn-Thurston coordinates as in \cite{LS}, which is an injective map
$$ \nu: \cS \embed \BZ^{6g-6}, \quad \nu(\al) = (\nu_i(\al))_{i=1}^{6g-g}.$$ 
For $i\le 3g-3$ one has  $\nu_i(\al)=I(\al, P_i)$. The remaining $3g-3$ coordinates of $\nu(\al)$ are the {\em twist} coordinates, where $\nu_{i+3g-3}(\al)$ is the twist coordinate of $\al$ at the loop $P_i$. We use the same conventions for the DT coordinates as in \cite{FKL}.
Our description relates to those of other authors as follows: Our dual graph is embedded as a spine of the complement of the red hexagon of \cite{LS}; in relation to \cite{FLP} it is embedded so that it misses the windows and is disjoint from the triangular model curves. The approach in \cite{Penner} is the same as in \cite{FLP}. All those conventions result in  the same DT coordinates of simple diagrams.

Let $\cA= \nu(\cS)\subset \BZ^{6g-6}$ and let $S: \cA \to \cS$ be the inverse of $\nu$.
The set  $\cA$ consists of all $\bn= (n_1,\dots, n_{6g-6})\in \BZ^{6g-6}$ such that
\be \label{eq.cA2}
\begin{cases}
 \text{  If $ i \le 3g-3$ then $n_i \ge 0$.}\\
\text {If $P_i,P_j,P_l$ bound a pair of pants then $n_i+n_j+n_l$ is even.}\\
\text{ If $n_i=0$ for some $i\le 3g-3$ then $n_{i+ 3g-3}\geq 0$.}
\end{cases}
 \ee
Since these conditions are linear, $\cA$ is a submonoid of $\BZ^{6g-6}$. Let $\bA$ be  the group generated by $\cA$.  

\def\SD{\cS^\Delta}
\def\AD{\cA^\Delta}
\def\cAT{\cA^\Delta}

\def\bode{{\boldsymbol \delta}}
\def\tnu{\tilde \nu}
\begin{lemma} The index of $\bA$ in $\BZ^{6g-6}$ is $2^{2g-3}$.
\label{r.index2}
\end{lemma}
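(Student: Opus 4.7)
The plan is to strip away the inequality conditions from \eqref{eq.cA2}---they vanish upon passing to the group generated by $\cA$---leaving only the parity constraints, and then to compute the resulting index via the mod-$2$ cellular boundary of the dual graph $\cD$.

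First I would describe $\bA$ explicitly. Only condition (2) of \eqref{eq.cA2} is group-theoretic, so $\bA$ sits inside the subgroup $\bA' \subset \BZ^{6g-6}$ defined by requiring $n_i + n_j + n_l$ to be even whenever $P_i, P_j, P_l$ bound a pair of pants. For the reverse inclusion, take $v = 2\sum_{i=1}^{3g-3} e_i$: this lies in $\cA$, since its first $3g-3$ entries are $2$, each parity sum equals $6$, and condition (3) is vacuous. Given any $\bn \in \bA'$, choose $N$ large enough that $\bn + Nv$ has all first $3g-3$ coordinates positive; then $\bn + Nv$ satisfies every clause of \eqref{eq.cA2}, so $\bn = (\bn + Nv) - Nv \in \bA$. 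Hence $\bA = \bA'$. Since the parity conditions involve only the first $3g-3$ coordinates, one has $\bA = \Lambda \oplus \BZ^{3g-3}$ with $\Lambda \subset \BZ^{3g-3}$ the parity subgroup, and the index reduces to $[\BZ^{3g-3}:\Lambda]$.

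Next, reducing mod $2$, I would identify $\BZ_2^{3g-3}$ with the $\BZ_2$-valued $1$-chains $C_1(\cD;\BZ_2)$ and $\BZ_2^{2g-2}$ with $C_0(\cD;\BZ_2)$. The parity condition at a pair of pants $C$ is then exactly the vanishing of the cellular boundary at the vertex of $\cD$ inside $C$---this is where the standing assumption that $\cD$ has no loop edges enters, so each incident edge contributes coefficient $1$ rather than $2$ at the vertex. Thus the image of $\Lambda$ in $\BZ_2^{3g-3}$ is $\ker \partial = H_1(\cD;\BZ_2)$ (there are no $2$-cells). The dual graph is connected, inherited from the connectedness of $\bF$, and has $2g-2$ vertices and $3g-3$ edges, so $b_1(\cD) = 1 - \chi(\cD) = g$ and $|H_1(\cD;\BZ_2)| = 2^g$. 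Therefore $[\BZ^{3g-3}:\Lambda] = 2^{3g-3}/2^g = 2^{2g-3}$, as claimed.

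The main potential obstacle is the first step: verifying that $\cA$ really generates all of $\bA'$ as a group, since the defining data of $\cA$ includes genuine inequalities rather than just linear conditions. The element $v$ gives a clean translation argument, but without such a witness the problem would be noticeably harder. Once $\bA = \bA'$ is established, the rest is a routine index calculation dressed up as graph homology.
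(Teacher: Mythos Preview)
Your proof is correct and follows the same overall strategy as the paper: reduce to the parity subgroup in the first $3g-3$ coordinates and compute its index. The execution differs in two places. First, you give an explicit translation argument (via the element $v$) to show that the inequality clauses of \eqref{eq.cA2} disappear in $\bA$; the paper handles this more implicitly by noting $\nu(P_j)=(\vec 0,\bode_j)$ to get the last $3g-3$ coordinates for free and then asserting the description of $p_1(\bA)$. Second, for the index computation itself the paper identifies the parity subgroup with $\overline{\cA'}$ for the ideal triangulation of the open surface $\mathring N(\cD)$ and invokes Lemma~\ref{r.index1}(b), whereas you compute directly via the $\BZ_2$-homology of the dual graph $\cD$ (using $b_1(\cD)=g$). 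Your route is more self-contained and avoids the detour through the auxiliary open surface; the paper's route has the virtue of reusing the earlier lemma and unifying the closed and open cases. The two computations are essentially dual to one another, so neither is deeper---just different packaging.
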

\begin{proof} Identify $\BZ^{6g-6} = \BZ^{3g-3} \oplus \BZ^{3g-3}$ and let $p_1$ be the projection from $\BZ^{6g-6}$ onto the first summand. Note that $\nu(P_j)= \vec{0} \oplus \bode_j$, where $\vec{0} \in \BZ^{3g-3}$ is the zero element and $\bode_j\in \BZ^{3g-3}$ is the element all of whose coordinates are zero except the $j$-th entry which is 1. It follows that $\bA = p_1(\bA) \oplus \BZ^{3g-3}$. Hence the index of $\bA$ in $\BZ^{6g-6}$ is equal to the index of $p_1(\bA)$ in $\mathbb{Z}^{3g-3}$.
By \eqref{eq.cA2}, set $p_1(\bA)$ is the subset of $\BZ^{3g-3}$ such that \\
(*) whenever   $P_i, P_j, P_k$ bound a pair of pants,  $n_i + n_j + n_k$ is even.

The interior  $\mathring N(\cD)$ of $N(\cD)$ is a finite type open surface with  $\chi(\mathring N(\cD))= 1-g$. Let $e_i:= P_i \cap \mathring N(\cD)$, then $(e_i)_{i=1}^{3g-3}$ 
is an ideal triangulation, giving rise to edge-coordinates of simple diagrams on $\mathring N(\cD)$, and the set of  all such edge-coordinates is denoted by $\cA'$. Note that $P_i, P_j, P_k$ bound a pair of pants if and only if $e_i, e_j, e_k$ are edges of an ideal triangle. Condition (*) and  Lemma \ref{r.index1}(a) show that
$ p_1(\bA)= \overline{\cA'}$. Hence
$$ [\BZ^{3g-3}: p_1(\bA)]=  [\BZ^{3g-3}: \overline{\cA'}]= 2^{-2\chi(\mathring N(\cD)) -1}= 2^{2g-3},$$
where the second identity 
 follows from Lemma \ref{r.index1}(b).
 
\end{proof}

\no{
\begin{proof}
Let $\overline{\cA}_0$ denote the subgroup of $\overline{\cA}$ consisting of tuples whose twist coordinates are zero. Let $\{\vec{0}\}\times \mathbb{Z}^{3g-3}$ denote the subgroup of $\mathbb{Z}^{6g-6}$ whose first $3g-3$ coordinates are zero. Since 
$\nu( P_i)=\vec{\delta}_{3g-3+i}$ (the vector all of whose coordinates are zero except the $3g-3+i$ entry),  we have $\{\vec{0}\}\times \mathbb{Z}^{3g-3}\leq \overline{\cA}$.  If $\vec{s}=\nu(\alpha)$ for some $\alpha \in \cS$, then $\vec{s}_0=\nu(\alpha_0)\in\cA$, where $\alpha_0$ has the same pants coordinates as $\alpha$ and has the twist coordinates equal to $0$.

Therefore $\overline{\cA}$ is the interior direct sum of $\overline{\cA}_0$ and $\{\vec{0}\}\times \mathbb{Z}^{3g-3}$.  Hence the index of $\overline{\cA}$ in $\mathbb{Z}^{6g-6}$ is equal to the index of $\overline{\cA}_0$ in $\mathbb{Z}^{3g-3}\times \{\vec{0}\}$.
We abuse notation and think of $\overline{\cA}_0$ as a subgroup of $\mathbb{Z}^{3g-3}$.  Denote the elements of the monoid of pants coordinates $\cA_0$.  We call an element of $\cA_0$ triangular if for any pair of pants, its entries corresponding to the boundary curves of the pairs of pants satisfy the triangle inequality.  If $\vec{n}\in \cA_0$ then there exists $k$ such that $\vec{n}+k\vec{2}$ is triangular.  Hence in $\overline{\cA}_0$ we can write $\vec{n}=(\vec{n}+k\vec{2})-k\vec{2}$.  Since both terms are triangular we have shown that $\overline{\cA}_0$ is generated by Dehn-Thurston coordinates that satisfy the triangle inequality.  We denote this set $\cA^{\Delta}_0$.  

Let $\mathring N(\cD)$ be a regular neighborhood of $\cD$ that is transverse to the pants curves and intersects each pants curve in a single arc.  The intersection of $\cP$ with $\mathring N(D)$ is an ideal triangulation. Identifying $\mathbb{Z}^{3g-3}$ coming from the pants coordinate of the curve $P_i$ with the geometric intersection number with the arc $P_i\cap \mathring N(D)$ the set $\cA^{\Delta}_0$ corresponds exactly to the admissible colorings of the ideal triangulation of $\mathring N(D)$ coming from the $P_i$. Therefore the index of $\overline{\cA}_0$ in $\mathbb{Z}^{6g-6}$ is equal to the index of the admissible colorings of the triangulation of $\mathring N(D)$.  Since the Euler characteristic of $\mathring N(D)$ is half the Euler characteristic of $F$, we have the desired result. \qed

Note that this index is determined by the pants coordinates, since $\bA$ has last $3g-3$ components equal to $\BZ$. 
The group $\bA$ is  spanned by the image of triangular diagrams.
The open surface $\mathring N(\cD)$  has Euler characteristic half of $E(F)$ and the formula follows from Lemma \ref{r.index1}.
\end{proof}
}

Suppose $\zeta$ is a root of 1. Let $\Ae= \nu(\Se)$ and $\Az = \nu(\Sz)$, then  $\Ae, \Az$ are submonoids of $\cA$.

 Let $m = \ord(\zeta^4)$.   Define the $\zeta$-residue group $\Rz(F)$ and the number $D_\zeta(F)$ just like in the case of open surfaces (noting $p=0$). That is,
\be \label{resdef1} \Rz(F)= \bA/\bAz \ee
which  depends on a coordinate datum of $F$. The formulas for $D$ are the same:
\be 
D_\zeta(F)= \begin{cases} m^{6g-6} \quad & \text{if} \quad \ord(\zeta) \neq 0 \mod 4 \\
2^{2g} m^{6g-6} &\text{if} \quad \ord(\zeta)  = 0 \mod 4.\end{cases}
\ee

\def\hA{\hat \cA}
\begin{prop}\label{r.dim1a}
 Suppose $F$ is a closed finite type surface with $g(F)\ge 2$.  Let $\zeta$ be a root of 1. 
  For any Dehn-Thurston datum of $F$,
\be 
|\Rz(F)| =  D_\zeta(F).
\ee
 \end{prop}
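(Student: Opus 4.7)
The plan is to mirror the proof of Proposition \ref{r.dim1}, simplifying at each step since for a closed surface $F$ there are no punctures and hence no peripheral curves: $\Sd=\{\emptyset\}$ and $\bAd=\{0\}$. Thus the definition of $\cS_\zeta$ reduces to $\Az=m\cA$ when $\ord(\zeta)\not\equiv 0\pmod 4$, and $\Az=m\Ae$ when $\ord(\zeta)\equiv 0\pmod 4$. In particular,
\[
\Rz(F)=\bA/m\bA \quad\text{or}\quad \Rz(F)=\bA/m\bAe,
\]
in the two respective cases, where $\bAe=\overline{\nu(\Se)}$.

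The first step is to observe that $\bA$ is a free abelian group of rank $6g-6$. This is immediate from Lemma \ref{r.index2}, which shows that $\bA$ has finite index $2^{2g-3}$ in $\BZ^{6g-6}$. Consequently $|\bA/m\bA|=m^{6g-6}$, which already handles Case~1.

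For Case~2 the key is an analogue of Proposition \ref{quotmon}(b): the quotient $\bA/\bAe$ is isomorphic to $H_1(F,\BZ_2)\cong\BZ_2^{2g}$. I would define the composition
\[
h:\cA \xrightarrow{S} \cS \longrightarrow H_1(F,\BZ_2),
\]
sending a simple diagram to its $\BZ_2$-homology class. This is a monoid homomorphism (disjoint union goes to sum) whose image generates $H_1(F,\BZ_2)$ because every nonzero $\BZ_2$-homology class on a closed surface is realized by a nontrivial simple closed curve. Extending to $\bar h:\bA\to H_1(F,\BZ_2)$ and invoking Lemma \ref{r.surj} shows $\bar h$ is surjective. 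By the very definition of $\Se$, $\ker \bar h = \bAe$. Hence $\bA/\bAe\cong \BZ_2^{2g}$, and since $\bA$ is free of rank $6g-6$, so is $\bAe$ (a subgroup of finite index in a free abelian group). Therefore
\[
|\Rz(F)| = |\bA/m\bAe| = |\bA/\bAe|\cdot|\bAe/m\bAe| = 2^{2g}\, m^{6g-6}.
\]

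The main technical point (the closest thing to an obstacle) is verifying that the homology map $\bar h$ really is surjective on $\bA$ rather than just on $\cA$, and that its kernel is exactly $\bAe$. The surjectivity is standard but worth spelling out: a symplectic basis of $H_1(F,\BZ_2)$ can be realized by $2g$ disjoint simple closed curves (up to reordering within genera), hence lies in the image of $h$ itself. The kernel identification is a direct translation of the definition of ``even simple diagram'' via intersection numbers with loops dual to a $\BZ_2$-basis. With these ingredients in place the result follows from routine arithmetic, exactly as in the open case.
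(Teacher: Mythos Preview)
Your proof is correct and follows essentially the same approach as the paper's: both reduce to $\Rz(F)=\bA/m\bA$ or $\bA/m\bAe$, use that $\bA$ has rank $6g-6$, and invoke the homology map $\bar h:\bA\to H_1(F,\BZ_2)$ (exactly as in Proposition~\ref{quotmon}(b)) to get $|\bA/\bAe|=2^{2g}$. Your write-up is simply more detailed than the paper's, which just says the argument is ``identical to that of Proposition~\ref{quotmon}.''
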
 

 \proof The proof is almost identical to that in the case of open surfaces.
 
 (a) Case 1: $\ord(\zeta)  \neq 0 \mod 4$. In this case $\Az = m\cA$, and
 $ \Rz(F) = \bA/m \bA$. Since
   $\rk \bA = 6g-6$, we have  $|\bA/m \bA| = m^{6g-6}$.

Case 2: $\ord(\zeta) =  0 \mod 4$. In this case $\Az = m \Ae$. 

First note that $|\bA /\bAe| = 2^{2g}$. The proof of this fact is identical to that of Proposition \ref{quotmon}.
\no{In fact, the composition $ \cA \overset {S} \longrightarrow \cS \overset {h} \longrightarrow H_1(\bF,\BZ_2)$, where $h(\al)$ is the homology class of $\al$, is a surjective monoid homomorphism and extends to a surjective group homomorphism $\bh: \bA \to H_1(\bF,\BZ_2)$.  By definition, $\ker \bh = \bAe$. Hence $\bA/\bAe\cong 
H_1(\bF,\BZ_2)\cong \BZ_2^{2g}$, and $|\bA /\bAe| = 2^{2g}$.}
Now we have
$$ |\Rz(F)| = |\bA/m \bAe| = |\bA /\bAe| |\bAe/ m \bAe| = 2^{2g} m^{6g-6}= D_\zeta(F). \ \qed$$

 When $F$ is closed we don't have a nice product formula like the identity in Proposition \eqref {r.prod1}. However, this identity   still holds for the class of triangular simple diagrams defined as follows.
 
 A simple diagram $\al\subset F$ is {\em triangular} with respect to $\cP$ if it is $\cP$-taut  and for every pair of pants $C$ each connected component of $\al \cap C$ is an arc whose two endpoints are in two different components of $\partial C$. In particular, $\al$ cannot have a component isotopic to any  $P_i$.

Let $\SD\subset \cS$ be the subset consisting of triangular simple diagrams, and $\AD= \nu(\SD)$. 
Then $\bn= (n_1,\dots,n_{6g-6})\in \cA$ is in $\cAT$ if and  only if
\begin{itemize}
\item whenever  $P_i, P_j, P_k$ bound a pair of pants,  $n_i \le n_j + n_k$, and
\item whenever $n_i=0$ for some $i\le 3g-3$, one has  $n_{i+3g-3}=0$.
\end{itemize}

We use the collection $\fA=\{P_1,\dots,P_{3g-3}\}$ to define the filtrations $F_k= F_k^\fA(\KF)$, as described in Subsection \ref{sec.filter}. That is, $F_k$ is the $\BC$-subspace spanned by $S(\bn), \bn\in \cA$ such that $|\bn|_1:=\sum_{i=1}^{3g-3} n_i \le k$. Unlike the case of open surface,  $F_k$ has infinite dimension over $\BC$.
\begin{prop}   \cite{FKL}\label{prsi}  For $\bn, \bn'\in \cAT$ there is  $j\in\BZ$ such that
\be
S(\bn) S(\bn') = \zeta^j  S(\bn+\bn') \mod F_{|\bn|_1 + |\bn'|_1 -1}. 
\label{eq.prod2}
\ee
\end{prop}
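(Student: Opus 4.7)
The plan is to work with explicit $\cP$-taut geometric representatives. Choose $\al,\beta$ to be $\cP$-taut triangular simple diagrams representing $S(\bn),S(\bn')$, and form their product by placing $\al$ in $F\times(1/2,1)$ and $\beta$ in $F\times(0,1/2)$. Because both diagrams are triangular, in each pair of pants $C$ they meet $C$ in a prescribed union of arcs of the three boundary types, and in an annular collar of each pants curve $P_i$ they wind with twist numbers $n_{i+3g-3}$ and $n'_{i+3g-3}$ respectively. All crossings of $\al\cup\beta$ thus occur inside these collars and pairs of pants and can be enumerated explicitly in terms of the DT coordinates.

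Next, I would expand $\al\cdot\beta$ into simple diagrams by resolving every crossing with the Kauffman bracket relation \eqref{KBSR}. At each crossing there is a distinguished \emph{concordant} smoothing, chosen so that if it is applied at every crossing the result is a $\cP$-taut triangular diagram with DT coordinates $\bn+\bn'$, hence equal to $S(\bn+\bn')$ up to isotopy. Each concordant smoothing contributes a factor $\zeta^{\pm 1}$ according to the over/under pattern; collecting these across all collars and pants yields a single integer $j$, and this gives the $\zeta^{j}S(\bn+\bn')$ term in \eqref{eq.prod2}. A bookkeeping in terms of the arc multiplicities in each pair of pants and the twist coordinates at each $P_i$ produces an explicit formula for $j$, parallel to the open-surface formula in \cite{FKL}.

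The remaining terms come from resolutions that use the non-concordant smoothing at one or more crossings, and I would show each such resolution lies in $F_{|\bn|_1+|\bn'|_1-1}$. In a collar of $P_i$ a non-concordant smoothing joins strands so as to form a bigon with $P_i$; isotoping the bigon across $P_i$ removes two intersections with $P_i$ and strictly lowers $|\cdot|_1$. Inside a pair of pants, a non-concordant smoothing between an arc of type $ij$ and one of type $jk$ replaces them with an arc of type $ik$ and a small loop; the loop is either trivial (removed via \eqref{KBSR2} after decreasing the arc content) or is isotopic into one of the $P_i$, in which case after straightening the resulting simple diagram again has strictly fewer intersections with $\cP$.

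The main obstacle is verifying this last step: ruling out the possibility that some non-concordant smoothing reassembles into a simple diagram with unchanged $|\cdot|_1$. The cleanest way is to argue locally in each pair of pants and each annular collar, using the hexagonal decomposition cut out by the dual graph $\cD$ as a combinatorial template—this is the same mechanism that drives the open-surface Proposition \ref{r.prod1}, and the triangular hypothesis $\bn,\bn'\in\cAT$ is precisely what rules out components parallel to the $P_i$, which would otherwise spoil the local count. Once the local analysis is in place, the global product formula \eqref{eq.prod2} follows by assembling the contributions across all pairs of pants.
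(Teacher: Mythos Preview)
The paper does not supply its own proof of this proposition: immediately after the statement it simply records that ``In \cite{FKL} we proved a stronger result, giving the exact value of $j$ in \eqref{eq.prod2}.'' So there is nothing in the present paper to compare your argument against beyond that citation.

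That said, your outline is exactly the strategy carried out in \cite{FKL}: realize $S(\bn)$ and $S(\bn')$ by $\cP$-taut triangular representatives, stack them, resolve all crossings, and show that a unique ``concordant'' pattern of smoothings yields the taut diagram with DT coordinates $\bn+\bn'$ while every other resolution produces a diagram with strictly smaller total intersection with $\cP$. You also correctly isolate the place where the triangular hypothesis is essential, namely in guaranteeing that the concordant resolution is again triangular (no $P_i$-parallel components or $(i,i)$-arcs appear), so that its DT coordinates are literally the sum.

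One point in your sketch is slightly off and would trip you up if you tried to fill in details as written. For triangular diagrams the three model arc types in a fixed pair of pants are pairwise disjoint, so an $(ij)$-arc of $\al$ does not meet a $(jk)$-arc of $\beta$ intrinsically inside the pants; the crossings of $\al$ with $\beta$ arise instead from the interleaving of their endpoints on the boundary circles $P_i$, which is governed by the twist coordinates and takes place in the annular collars. The local bigon/turn-back analysis that drops the filtration therefore happens in those collars, not via the ``$(ij)$ meets $(jk)$ makes $(ik)$ plus a loop'' picture you describe. Once you relocate the crossing analysis to the annuli, your concordant/non-concordant dichotomy and the bigon argument go through, and the explicit bookkeeping of the $\zeta^{\pm1}$ contributions recovers the formula for $j$ stated in \cite{FKL}.
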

In \cite{FKL} we proved a stronger result, giving the exact value of $j$ in \eqref{eq.prod2}.

\section{Stable Dehn-Thurston coordinates}\label{stabledt} 

Throughout this section  $F$ is a closed surface with $g=g(F) \ge 2$, and  with a  fixed  coordinate datum $(\cP,\cD)$. 
Proposition \ref{prsi} only works for triangular simple diagrams. We show that after enough twisting by $\Omega$ a simple diagram  becomes triangular and its  DT coordinates  become an affine function of the number of twists. 

\subsection{Stable DT coordinates}
  
\begin{theorem} \label{thm.stab} Let $F$ be a closed surface of genus $g\ge 2$ equipped with  coordinate datum $(\cP,\cD)$. Let $h_{\Omega}:F\rightarrow F$ be the product of the Dehn twists about $o_1, \dots, o_l$, which are the components of 
$\Omega=\partial N(\cD)$.
 
Let $\al\subset F$ be a simple diagram. There exists  $\eta(\al) \in \BZ^{6g-6}$ such that if $k$ is large enough 
then $h^{k}_\Omega(\al)$ is triangular with respect to $\cP$, and
\be 
\nu(h^{k}_\Omega(\al)) = k \mu(\al) + \eta(\al),\label{stabdt}
\ee
where
\be
\mu(\al)= \sum_{j=1}^{l} I(\al,o_j) \nu(o_j).  \label{eq.mu}
\ee
In particular, 
 the last $3g-3$ coordinates of $\mu(\al)$ are equal to $0$.
 
\end{theorem}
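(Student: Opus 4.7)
The plan is to control $h_\Omega^k(\alpha)$ geometrically, using that $h_\Omega$ is supported in disjoint annular neighborhoods of the $o_j$. First I would put $\alpha$ in taut position with respect to $\Omega$, so that $|\alpha \cap o_j| = I(\alpha, o_j)$, and fix disjoint annular neighborhoods $A_j \supset o_j$. Then $h_\Omega^k(\alpha) = \alpha$ outside $\bigsqcup_j A_j$, while inside each $A_j$ the $I(\alpha, o_j)$ transverse arcs of $\alpha$ are replaced by arcs winding $k$ times around $o_j$. Up to isotopy, $h_\Omega^k(\alpha)$ is represented by the taut resolution of $\alpha \cup (k \cdot \Omega)$, in the sense of splicing $k\,I(\alpha, o_j)$ parallel copies of $o_j$ into $\alpha$ at each crossing.

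Next I would show that for $k$ large, $h_\Omega^k(\alpha)$ is $\cP$-taut and triangular. Each $o_j \subset \partial N(\cD)$ is itself $\cP$-taut and triangular, since $o_j \cap C$ in each pair of pants $C$ is a disjoint union of hexagon arcs $a_i, a_j, a_l$, each joining two distinct components of $\partial C$. Consequently $\nu(o_j) \in \cAT$, and taking the triangle inequalities $\nu_i(o_j) \le \nu_j(o_j) + \nu_l(o_j)$, weighted by $I(\alpha, o_j)$ and summed, gives $\mu_i(\alpha) \le \mu_j(\alpha) + \mu_l(\alpha)$. Since the $k$-linear contribution dominates, for $k$ sufficiently large the DT coordinates of $h_\Omega^k(\alpha)$ satisfy all the triangle and vanishing conditions characterizing $\cAT$, and a bigon-criterion argument shows the residual bounded pieces of $\alpha$ outside $\bigsqcup_j A_j$ can be absorbed by a taut isotopy, yielding $h_\Omega^k(\alpha) \in \SD$ for $k \ge k_0(\alpha)$.

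Finally I would compute the DT coordinates. For the pants coordinates ($i \le 3g-3$), use $I(h_\Omega^k(\alpha), P_i) = I(\alpha, h_\Omega^{-k}(P_i))$ together with the standard asymptotic formula $I(\beta, T_o^{-k}\gamma) = k \cdot I(\beta, o) \cdot I(o, \gamma) + I(\beta, \gamma)$ for $\beta, \gamma$ in taut position with disjoint $o$ and $k$ large; summing over $j$ yields slope $\sum_j I(\alpha, o_j) I(o_j, P_i) = \mu_i(\alpha)$ and constant term $\nu_i(\alpha)$. For the twist coordinates, each $o_j$ can be isotoped to lie along the dual graph $\cD$, so $o_j$ has zero twist coordinate at every $P_i$, giving simultaneously the vanishing of the last $3g-3$ entries of $\mu(\alpha)$ and the $k$-independence of the twist coordinates of $h_\Omega^k(\alpha)$ in the stable regime. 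The main obstacle is upgrading the asymptotic formulas to an exact affine equality beyond the stabilization threshold, together with the twist-coordinate bookkeeping; the cleanest way is to observe that once $h_\Omega^k(\alpha)$ is triangular, one further application of $h_\Omega$ splices in a triangular multi-arc whose DT coordinates are exactly $\sum_j I(\alpha, o_j) \nu(o_j) = \mu(\alpha)$, so by additivity of DT coordinates on disjoint triangular configurations the increment in $\nu$ stabilizes to precisely $\mu(\alpha)$, establishing $\nu(h_\Omega^k(\alpha)) = k\mu(\alpha) + \eta(\alpha)$ for all $k \ge k_0(\alpha)$.
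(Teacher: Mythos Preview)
Your approach is a direct geometric argument and differs from the paper's, but several steps do not go through as stated.

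First, the formula you invoke, $I(\beta, T_o^{-k}\gamma) = k\, I(\beta,o)\, I(o,\gamma) + I(\beta,\gamma)$, is not an exact identity: \cite{FLP} only gives the inequality $|I(h_\Omega^k(\alpha),\beta) - k\sum_j I(\alpha,o_j) I(o_j,\beta)| \le I(\alpha,\beta)$. So you cannot read off the constant term of the pants coordinates this way, and your final additivity step (``one further application of $h_\Omega$ splices in exactly $\mu(\alpha)$'') would require showing that no bigons are created between $h_\Omega^{k+1}(\alpha)$ and each $P_i$ once $h_\Omega^k(\alpha)$ is triangular; you have not established this. Second, the triangularity argument via ``the $k$-linear term dominates'' breaks down precisely when $\mu_i(\alpha) = \mu_j(\alpha) + \mu_l(\alpha)$; in the paper's notation this is the case $m_i=0$, i.e.\ when $\alpha$ misses the component of $\Omega$ containing the arc $a_i$, and there the eventual triangle inequality for $h_\Omega^k(\alpha)$ depends on the constant terms you have not controlled. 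Third, the claim that $o_j$ can be isotoped onto $\cD$ is false ($\cD$ is trivalent with no loop edges, so it contains no embedded circles), and in any case the vanishing of the last $3g-3$ entries of $\mu(\alpha)$ does not by itself give $k$-independence of the twist coordinates of $h_\Omega^k(\alpha)$; one still needs a boundedness statement.

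The paper avoids all of these issues by working abstractly rather than by explicit splicing. It shows (i) each $k\mapsto I(h_\Omega^k(\alpha),\beta)$ is convex (Luo) and bounded above by an affine function (FLP), hence piecewise affine, and (ii) the DT coordinates are piecewise-linear functions of finitely many such intersection numbers (Luo--Stong). Together these force $k\mapsto \nu(h_\Omega^k(\alpha))$ to be eventually affine, with no need to compute $\eta(\alpha)$. The twist coordinates are bounded by a separate geometric estimate, $\sum_j |t_j(S)| \le 2\,I(S,\cD)$, and since $h_\Omega$ does not increase $I(\cdot,\cD)$ the slope vanishes in the twist directions. Only the slope of the pants coordinates is then identified, via comparison with the FLP inequality. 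Finally, triangularity is proved by a case analysis: when $m_i>0$ the slope argument suffices, and when $m_i=0$ one observes that $h_\Omega^k(\alpha)$ is disjoint from the relevant component $o_s$ of $\Omega$, so $h_\Omega^k(\alpha)\cup o_s$ contains a $(j,l)$-arc in the pair of pants, which precludes any $(i,i)$-arc.
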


Note that the action of any element of the mapping class of the surface on diagrams extends linearly to yield an automorphism of the skein algebra, In specific, it makes sense to talk about $h^{k}_\Omega(x)$ for $x\in \KF$.

We present the proof of Theorem \ref{thm.stab} in Subsection \ref{sec.pf1}.

\subsection{Piecewise affine functions} A function $f: \BR^k \to \BR^l$ is {\em  affine} if there is an $l \times k $ matrix $A$ and a vector $B$ such that 
$$ f(x) = A\cdot x + B.$$
A function $f: \BR^k \to \BR^l$ is {\em piecewise affine} if  
there is  a finite collection  of proper affine subspaces 
of $\BR^k$ such that 
in the closure of any connected component of 
the complement of these affine spaces $f$ is equal to an affine function. It is easy to see that the class  of piecewise affine functions is closed under linear combinations and compositions.

A function $f: X \to \BZ^l$,  where $X\subset \BR^k$, is {\em piecewise affine} if it is the restriction of a  piecewise affine function $\overline{f}: \BR^k \to \BR^l$.

\begin{lemma} If $f: \BN \to \BN$ is convex and bounded from above by an affine function, then $f$ is piecewise affine.
\label{r.convex}
\end{lemma}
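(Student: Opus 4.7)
The plan is to translate convexity of $f\colon\BN\to\BN$ into a monotonicity statement for its forward differences, then use the upper affine bound to force those differences to stabilize. Set $d_n := f(n+1) - f(n)\in\BZ$ for $n\in\BN$. Convexity of $f$ (equivalently $f(n-1)+f(n+1)\ge 2f(n)$) is exactly the assertion that the integer sequence $(d_n)$ is non-decreasing.

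Suppose now that $f(n)\le an+b$ for all $n\in\BN$, with $a,b\in\BR$. Since $f\ge 0$ and the $d_k$ are non-decreasing, the telescoping inequality
\[
n\,d_n \;\le\; d_n + d_{n+1} + \cdots + d_{2n-1} \;=\; f(2n) - f(n) \;\le\; 2an + b
\]
gives $d_n \le 2a + b/n$, so $(d_n)$ is bounded above. A non-decreasing, bounded-above sequence of integers is eventually constant: there exist $N\in\BN$ and $c\in\BZ$ such that $d_n = c$ for every $n\ge N$. Consequently $f(n) = f(N) + c(n-N)$ for all $n\ge N$, i.e., $f$ agrees with an honest affine function on $\{N, N+1, \dots\}$.

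To realize $f$ as the restriction of a piecewise affine $\bar f\colon\BR\to\BR$ in the sense of the preceding definition, I would define $\bar f$ on $[0,N]$ by linear interpolation between the consecutive integer points $(n,f(n))$ for $0\le n\le N$, continue it by the same affine formula $\bar f(x) = f(N) + c(x-N)$ on $[N,\infty)$, and extend arbitrarily (for instance, affinely) to $(-\infty,0)$. This uses only finitely many affine pieces, since everything past $N$ collapses into a single piece, so $\bar f$ is piecewise affine and $\bar f|_{\BN} = f$.

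The only substantive input is the chain \emph{convex $+$ upper affine bound $+$ integer-valued $\Rightarrow$ slope sequence eventually constant}; beyond this short pigeonhole-style observation I do not anticipate any obstacle.
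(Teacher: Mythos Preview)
Your argument is correct and is precisely the kind of elementary argument the paper has in mind; the paper itself offers no details beyond ``follows easily from the definition,'' so your proof simply fills in what the authors left to the reader.
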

\begin{proof}
The lemma  follows easily from the definition. 
\end{proof} 

\begin{lemma} For any $\al, \beta \in \cS$ and $k>0$ one has
\be 
 |I(h^{k}_\Omega(\al), \beta) - k \sum_{j=1}^l I(\al, o_j) I(o_j, \beta) | \le I(\al, \beta).
 \label{eq.b1}
 \ee
\label{r.b1}
\end{lemma}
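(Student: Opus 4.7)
The plan is to reduce Lemma~\ref{r.b1} to the classical intersection bound for iterated Dehn twists about a disjoint multicurve. Since $\Omega = o_1 \sqcup \cdots \sqcup o_l$ is a simple multicurve, the twists $T_{o_j}$ pairwise commute and $h_{\Omega}^{k} = T_{o_1}^{k} \cdots T_{o_l}^{k}$. I would begin by putting $\alpha$, $\beta$, and $\Omega$ simultaneously in pairwise taut position on $F$, which is possible because all of these are $1$-submanifolds in general position on a surface, and by choosing pairwise disjoint annular neighborhoods $U_j$ of the $o_j$'s. Set $U = \bigsqcup_j U_j$. The key geometric facts are that $h_{\Omega}$ is the identity on $F \setminus U$ and restricts to $T_{o_j}^{k}$ on $U_j$, and that $I(h_\Omega^k(\alpha), o_j) = I(\alpha, o_j)$ since the neighborhood $U_j$ is invariant under each $T_{o_i}$.

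The main input is the classical Fathi--Laudenbach--Poénaru estimate for a single Dehn twist: for a simple closed curve $c$ and any simple diagrams $\gamma, \delta$ and integer $k \ge 0$,
\[
\bigl| I(T_c^{k}(\gamma), \delta) - k\, I(\gamma, c)\, I(c, \delta) \bigr| \le I(\gamma, \delta).
\]
Applying this directly in the setting of the lemma, after cutting along $\Omega$: outside $U$, $h_\Omega^k(\alpha) \cap (F \setminus U) = \alpha \cap (F \setminus U)$, so these arcs contribute exactly the same intersections with $\beta$ as $\alpha$ did in $F \setminus U$. Inside each annulus $U_j$, both $\alpha \cap U_j$ and $\beta \cap U_j$ consist of essential arcs, numbering $I(\alpha, o_j)$ and $I(o_j, \beta)$ respectively (by the tautness assumption). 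After the $k$-fold twist, each arc of $\alpha$ inside $U_j$ wraps $k$ times around $o_j$, and in the annular model intersects each arc of $\beta$ in essentially $k$ points. Summing over $j$ and adding the contribution from $F \setminus U$ produces the claimed estimate.

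The bookkeeping for the error term proceeds as follows. In the annular model, the exact number of intersections between a $k$-times-winding arc of $\alpha$ and a straight essential arc of $\beta$ is $k$ or $k \pm 1$ depending on the cyclic order of endpoints on $\partial U_j$, and in fact the total number of intersections inside $U_j$ equals $k \, I(\alpha, o_j)\, I(o_j, \beta)$ up to a correction whose magnitude is bounded by the number of original intersections $|\alpha \cap \beta \cap U_j|$ that become bigons after twisting. Summing over all $j$ and combining with the $|\alpha \cap \beta \cap (F \setminus U)|$ unchanged intersections outside $U$ yields a total discrepancy at most $|\alpha \cap \beta| = I(\alpha, \beta)$, which is exactly the bound in \eqref{eq.b1}. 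Equivalently, one may simply iterate the single-twist bound and observe that the stability $I(T_{o_1}^k \cdots T_{o_{j-1}}^k(\alpha), o_j) = I(\alpha, o_j)$ lets the sum telescope without compounding the error.

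The main obstacle is controlling the error in the annular piece so it does not compound. A naive induction on $j$ using the single-twist FLP bound produces an error of the form $l \cdot I(\alpha, \beta)$ rather than $I(\alpha, \beta)$, because each step bounds the discrepancy by the current intersection number, which itself is already large after twisting. The clean resolution is to do the estimate in one stroke by cutting $F$ along all components of $\Omega$ simultaneously, so that the only intersections of $h_\Omega^k(\alpha)$ with $\beta$ whose count is not pinned down exactly by the spiral model are those that already belonged to $\alpha \cap \beta$ before the twisting was applied.
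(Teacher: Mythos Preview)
Your approach is correct in spirit, but the paper takes a much shorter route: it simply observes that the inequality is a special case of Proposition~A1 in Expos\'e~4 of \cite{FLP}. That proposition is already stated for a \emph{family} of pairwise disjoint simple closed curves, not just a single one, so no reduction to the single-twist case is needed and the compounding-error issue you worry about never arises. What you are sketching---cutting along all of $\Omega$ at once, analysing the annular pieces in a spiral model, and bounding the discrepancy by the original $|\alpha\cap\beta|$---is essentially the content of the FLP proof itself, so you are re-deriving the cited result rather than invoking it. Your observation that a naive induction on the twist curves would degrade the bound to $l\cdot I(\alpha,\beta)$ is correct, and your fix (handle all annuli simultaneously) is exactly how FLP proceed; but for the purposes of this paper one line of citation suffices.
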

\begin{proof}
This is  a special case of Proposition A1 of \cite[Section 4]{FLP}.
\end{proof}

In order to prove the next proposition we need to explore the topology of Dehn-Thurston datum $(\cP,\cD)$ of a closed surface $F$. To define a geometric intersection of  a simple diagram $S$ with the graph $\cD$ we add the assumption that $S$ misses the vertices of $\cD$. That is,  $I(S,\cD)$ is the minimum cardinality of $S'\cap \cD$ where $S'$ is isotopic to $S$, misses the vertices of $\cD$ and is transverse to its edges.

\begin{lemma} \label{bound} If $(\cP,\cD)$ is DT-datum for the closed surface $F$ and $S$ is simple diagram with Dehn-Thurston coordinates $(\bn (S), \bt(S))$,
 then \be \sum_j|t_j(S)|\leq 2I(S,\cD).\ee \end{lemma}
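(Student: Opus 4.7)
The plan is to localize the estimate to each pants curve and then sum. The first step is to pick a representative of $S$ that is simultaneously taut with every edge of $\cD$; this can be arranged by a standard bigon-elimination argument applied to the $1$-complex $\cD$ on $F$. Because $\cD = \bigsqcup_j e_j$ as a set, this gives
\[
I(S,\cD) \;=\; |S \cap \cD| \;=\; \sum_{j=1}^{3g-3} I(S, e_j).
\]
It therefore suffices to prove a per-edge estimate of the form $|t_j(S)| \le 2\,I(S, e_j)$ for each $j$; summing over $j$ then yields the lemma.

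The second step is a geometric analysis inside an annular neighborhood $A_j$ of $P_j$, whose two boundary components lie in $\Omega$. The edge $e_j$ crosses $P_j$ exactly once, so $e_j \cap A_j$ is an essential transverse arc that cuts $A_j$ open to a rectangle $R_j$. The arcs of $S \cap A_j$ lift to $R_j$, and each lifted arc is either a straight top-to-bottom crossing or wraps a signed number of times around the annulus; each wrap contributes exactly one transverse intersection with $e_j$. By the DT convention followed in the paper (see \cite{LS, FKL}), $t_j(S)$ equals the sum of the signed wrap numbers of the $n_j(S)$ crossing strands, or, in the degenerate case $n_j(S)=0$, the number of copies of $P_j$ lying parallel in $S$. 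Each parallel copy meets $e_j$ exactly once, so $|t_j(S)| \le I(S,e_j)$ handles the degenerate case, while in the generic case the absolute value of a signed sum of wraps is bounded by the total unsigned wrap count, i.e.\ by the number of intersections of the crossing strands with $e_j \cap A_j$.

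The main obstacle is the reference-position correction intrinsic to the DT twist coordinate: the zero-twist reference is pinned down by the red hexagons in the two pairs of pants adjacent to $P_j$, so the total signed wrap inside $A_j$ differs from $t_j(S)$ by an error depending on how the strands exit $A_j$ into the hexagons. Showing that this error is itself controlled by $I(S,e_j)$, thereby absorbing the factor of two in the conclusion, is the technical heart of the argument. Once this defect is bounded, summing the per-edge inequalities gives $\sum_j |t_j(S)| \le 2\,I(S,\cD)$ as claimed.
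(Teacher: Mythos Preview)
Your localization strategy cannot succeed, because the per-edge inequality $|t_j(S)| \le 2\,I(S,e_j)$ that you set out to prove is \emph{false} in general. Here is a concrete counterexample on a genus-$2$ surface with the standard coordinate datum (two pairs of pants $C,C'$, each bounded by $P_1,P_2,P_3$; dual graph $\cD$ a theta graph with edges $e_1,e_2,e_3$). Take $S$ with Dehn--Thurston coordinates $n=(2,2,0)$, $t=(1,1,0)$. In standard position $S$ consists of two model arcs $d_{12}$ in each pair of pants, with one unit of twist in each of $A_1$ and $A_2$, so $|S\cap\cD|=2$. Now isotope one of these arcs in $C$ to the companion arc $d'_{12}$: this removes one crossing with $e_1\cap A_1$ and one with $e_2\cap A_2$, at the cost of a single crossing with $e_3$ inside $C$. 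Thus $I(S,\cD)=1$, realized entirely on $e_3$, whence $I(S,e_1)=I(S,e_2)=0$. But $|t_1(S)|=|t_2(S)|=1$, so your proposed per-edge bound fails for $j=1,2$. Note that the global bound $\sum_j|t_j|=2=2\,I(S,\cD)$ is sharp here.

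What this example exposes is exactly the point you flagged but did not resolve: the reference-position correction at $P_j$ is \emph{not} controlled by $I(S,e_j)$. It is controlled by intersections with the \emph{other} edges $e_m$, $m\neq j$, in the adjacent pairs of pants. The paper's argument is accordingly global. One introduces a \emph{special position} for $S$ that simultaneously realizes $I(S,\cD)$ and is taut with the annulus boundaries; in each shrunken pair of pants $Q_i$ the arcs of $S$ are then parallel either to model arcs $d_{kl}$ (disjoint from $\cD$) or to companion arcs $d'_{kl}$ (each crossing the leg $e_m$ once, $m\notin\{k,l\}$). Passing from special to standard position replaces each $d'_{kl}$ by $d_{kl}$, which deletes the one intersection with $e_m$ in $Q_i$ and creates one new intersection in each of $A_k$ and $A_l$. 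Hence $|S\cap\cD|_{\mathrm{standard}}\le 2\,I(S,\cD)$, and since in standard position $\sum_j|t_j|\le |S\cap\cD|$, the lemma follows. The factor of $2$ is thus an artifact of one crossing on $e_m$ reappearing on two different edges $e_k,e_l$; it cannot be captured edge by edge.
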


\begin{proof} Given Dehn-Thurston datum for $F$ let $A_j$ denote the annuli which are collars of the pants curves $P_j$, and let $Q_i$  be the pairs of pants that are the complement of $\cup A_j$ in $F$. These are the shrunken pairs of pants, versus the pairs of pants $C_j$ as defined in section \ref{sec.Coor2}.
By assumption $\cD$ is transverse and minimizes its intersection with the boundaries of the annuli $A_j$. We say that a simple diagram is in {\em standard position} if
\begin{itemize}
\item its intersection with the $Q_i$ is isotopic to  standard model curves in the complement of $\cD$, 
\item its intersection with $\partial A_j$ is disjoint from $\partial A_j \cap \cD$, 
\item it minimizes its intersection with $\cD\cap A_j$, for each $j$.
\end{itemize}  
If a simple diagram is in standard position then its twists coordinates are given by its signed intersection numbers with 
$\cD\cap A_i$.

We are most interested in the case that $S$ is triangular. In Figure \ref{model}  we show the triangular model curve $d_{12}$ and another curve $d'_{12}$ that will play a role in the following. For each pair of boundary components of each pair of pants $Q_i$ there are two curves like this, the model curve $d_{ij}$ and its mate $d'_{ij}$. Recall that model curves describe  possible ways in which a simple diagram in standard position intersects a pair of pants (see, e.g., \cite{FKL}).


\begin{figure}[H]\begin{center}\scalebox{.5}{\begin{picture}(150,151)\includegraphics{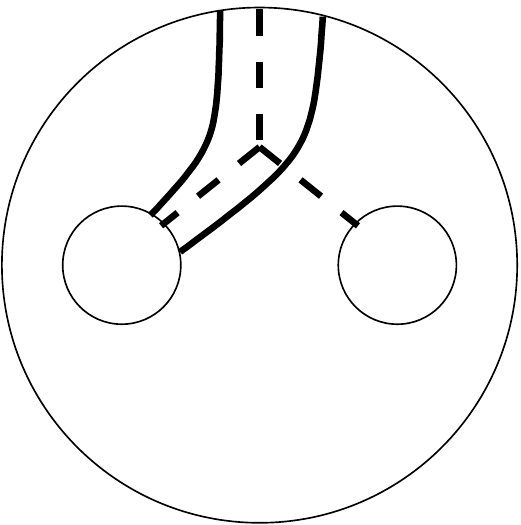}\put(-110,120){$d_{12}$}\put(-50,120){$d'_{12}$}\end{picture}}
\end{center}
\caption{The model curve $d_{12}$ and the curve $d'_{12}$.}\label{model}\end{figure}

We say a triangular diagram $S$ is in {\em special position} if:
\begin{itemize}
  \item It realizes $I(S,\partial A_j)$ for all $j$;
\item It realizes $I(S,\cD)$;
\item It does not intersect $\cD\cap \partial A_j$ for any $j$;
\item It minimizes the intersection  $S\cap \cD \cap Q_i$ for all $i$, among all $S$ satisfying the first condition.
\end{itemize}

It is easy to see that if $S$ is a triangular diagram in special position then it intersects each $Q_j$ in curves parallel to model curves $d_{kl}$ and the new curves $d'_{kl}$.  

To move a curve from special position to standard position, each curve of type $d'_{kl}$ needs to be isotoped to a curve of type $d_{kl}$.  In the Figure \ref{deformed}  we show a curve of the form $d_{kl}'$ in the process of being pushed into standard position. Its intersection with $\cD$ needs to be pushed inside the annuli. As the result the intersection of $S$ with $\cD\cap A_j$ in each of the annuli on either end of $d_{kl}$ is incremented by $1$.

\begin{figure}[H]\begin{center}\scalebox{.5}{\begin{picture}(150,151)\includegraphics{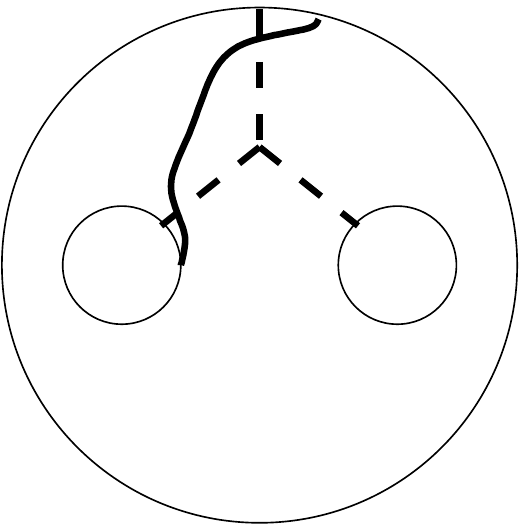}\end{picture}}
\end{center}
\caption{Deforming the  curve $d_{12}'$ on the way to standard position}\label{deformed}\end{figure}

Thus \be\sum_j |t_j(S)|\leq 2I(S,\cD).\ee\end{proof}


\begin{prop} \label{r.fact1} Let $F$ be a closed surface of genus $g$ with fixed coordinate datum.

(a) 
There exists an additional collection of $6g-6$ loops $P_j$ with $j=3g-2, \dots, 9g-9$, and
 a piecewise affine function $G: \BZ^{9g-9} \to  \BZ^{6g-6}$ such that for all $\al\in \cS$,
\be 
\nu(\al) = G\Big( I(\al,P_1), \dots , I(\al, P_{9g-9})  \Big ). \label{eq.twistpl}
\ee

(b) For any  $\al, \beta \in \cS $,  the function $f_{\al,\beta}: \BZ  \to \BZ$, defined by
 $f_{\al,\beta}(k)= I(h_{\Omega}^k(\al), \beta))$, is piecewise affine.

(c)  For any $\al\in \cS$, the twist coordinates of $h_{\Omega}^k(\al)$ are bounded, meaning that  there is a constant $M>0$ such that for all $k$ and all $i=3g-2,\dots, 6g-6$, one has $|(h_{\Omega}^k(S))_i| < M$.
\end{prop}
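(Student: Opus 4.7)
The plan is to treat the three parts in a convenient order, starting with (c) because it is the shortest.

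\textbf{Part (c).} The key geometric observation is that the multi-twist $h_\Omega$ can be realized by a diffeomorphism supported in an arbitrarily small tubular neighborhood of $\Omega = \partial N(\cD)$, which can be chosen disjoint from $\cD$. With such a representative $h_\Omega$ fixes $\cD$ pointwise, so
\[
I(h_\Omega^k(\al),\cD) \;=\; I(\al, h_\Omega^{-k}(\cD)) \;=\; I(\al,\cD)
\]
for every $k$. Lemma \ref{bound} then gives $\sum_{j=1}^{3g-3} |t_j(h_\Omega^k(\al))| \le 2 I(\al,\cD)$, and $M := 2I(\al,\cD)+1$ works.

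\textbf{Part (a).} The first $3g-3$ coordinates $n_i = I(\al,P_i)$ are already intersection numbers. For the twist coordinates, for each pants curve $P_i$ I would introduce two auxiliary loops $P_{3g-3+i}$ and $P_{6g-6+i}$, each transverse to $P_i$ and supported in a neighborhood of the adjacent pair(s) of pants, with $P_{6g-6+i}$ obtained from $P_{3g-3+i}$ by a full Dehn twist about $P_i$. The classical explicit expressions for Dehn--Thurston coordinates in terms of intersection numbers with dual curves (cf.\ Penner--Harer or Luo--Stong) then express $t_i(\al)$ as a piecewise affine function of $I(\al,P_j)$, $j=1,\dots,9g-9$: the magnitude $|t_i|$ is recovered from $I(\al,P_{3g-3+i})$ after subtracting an affine combination of the $n_j$, and the sign is fixed by comparing with $I(\al,P_{6g-6+i})$. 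Piecing these local formulas together (with appropriate branches at the loci $n_i=0$) yields the required piecewise affine $G$.

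\textbf{Part (b).} I would apply Lemma \ref{r.convex} to $f_{\al,\beta}(k) := I(h_\Omega^k(\al),\beta)$ restricted to $\BN$. The affine upper bound needed in that lemma is immediate from Lemma \ref{r.b1}:
\[
f_{\al,\beta}(k) \le k\sum_{j=1}^{l} I(\al,o_j)\,I(o_j,\beta) + I(\al,\beta).
\]
For the convexity of $f_{\al,\beta}$, I would carry out a local analysis in disjoint annular neighborhoods $A_{o_j}$ of the components $o_j$. Since the $o_j$ are pairwise disjoint one has $h_\Omega^k = \prod_j \tau_{o_j}^k$, and choosing mutually taut representatives of $\al$, $\beta$, and the $o_j$, the count of crossings of $h_\Omega^k(\al)$ with $\beta$ inside $A_{o_j}$ is a sum over pairs of arcs of a shifted absolute-value function $|ka_j - b_j|$ with $a_j \ge 0$, hence convex in $k$. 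The contributions outside $\bigcup_j A_{o_j}$ are independent of $k$. Summing and applying Lemma \ref{r.convex} to the restriction to $\BN$, and separately (with $h_\Omega^{-1}$ in place of $h_\Omega$) to $-\BN$, gives piecewise affineness on $\BZ$.

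\textbf{Main obstacle.} The bulk of the technical work is the convexity claim in (b): formalizing the ``decomposition into annular contributions'' requires careful choice of taut representatives of all the relevant curves and verifying that crossings outside the $A_{o_j}$ remain $k$-independent, so that nothing interferes with the convexity of the individual shifted absolute values. Part (a) is classical but still requires verifying that the dual-curve formulas remain piecewise affine on all of $\cA$, including the boundary strata where $n_i = 0$ and the constraint $n_{i+3g-3}\ge 0$ from \eqref{eq.cA2} becomes active. Once (a) and (b) are in hand, (c) follows cleanly as above.
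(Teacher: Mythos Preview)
Your overall architecture matches the paper's exactly: for (c) you use Lemma~\ref{bound} together with the invariance of $\cD$ under $h_\Omega$; for (b) you combine an affine upper bound from Lemma~\ref{r.b1} with convexity and then invoke Lemma~\ref{r.convex}; and (a) is a known parametrization result. The only substantive difference is that where you sketch direct arguments, the paper simply cites the literature: part (a) is \cite[Proposition~4.4]{LS}, and the convexity in (b) is \cite[Corollary~3]{Luo2}.

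Your direct approach to convexity in (b) is the one place that is not quite a proof as written. The heuristic ``sum of shifted absolute values $|ka_j-b_j|$ inside each annulus, constant outside'' describes a crossing count for a particular representative of $h_\Omega^k(\al)$, not the geometric intersection number $I(h_\Omega^k(\al),\beta)$, which is a minimum over isotopy. For a single twist curve one can show that this representative is already taut (this is the classical computation behind \cite[Proposition~A1]{FLP}), but for a multi-twist about several $o_j$ one must rule out bigons that straddle more than one annulus; this is exactly the content you flag as the ``main obstacle,'' and it is nontrivial. Luo's convexity theorem handles this in one stroke, which is why the paper invokes it rather than repeating the local analysis. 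Your (a) and (c) are fine; in fact your (c) is slightly sharper than the paper's, giving equality $I(h_\Omega^k(\al),\cD)=I(\al,\cD)$ rather than just non-increase.
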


\begin{proof} (a)  This follows from  \cite[Proposition 4.4]{LS}. In \cite{LS} it was proved that the Dehn-Thurston coordinates can be expressed as  homogeneous continuous functions of the intersection numbers. But the explicit functions appeared there are actually piecewise linear.

(b) By \cite[Corollary 3]{Luo2}, the function $f_{\al,\beta}: \BN \to \BN$ is convex. 
By Lemma \ref{r.b1}, $f_{\al,\beta}$ is bounded from above by an affine function.
\no{
When doing Dehn twist about $\Omega$ we modify $\al$ in a small neighborhood of $\Omega$ by inserting arcs wrapping about $\Omega$. It follows that $I(h_{\Omega}^k(\al), b))$ is bounded from above by an affine function.}
By Lemma \ref{r.convex}  
$f_{\al,\beta}$ is piecewise affine.

(c) Notice that applying $h_{\Omega}^k$ to a diagram $S$ does not increase $i(S,\cD)$. By Lemma \ref{bound}  the twist coordinates of $h_{\Omega}^k(S)$ are bounded above by $2i(S,\cD)$.
\end{proof}

\subsection{Proof of Theorem \ref{thm.stab}} \label{sec.pf1}
Let $\al\in \cS$.
Parts (a) and (b) of Proposition  \ref{r.fact1} along with Formula \eqref{eq.twistpl} imply that the function $k \to \nu(h^k_\Omega(\al))$ is piecewise affine. Hence it is affine for big enough $k$. Thus there exist $k_0$ and $ \mu(\al), \eta(\al) \in \BZ^{6g-6}$ such that if $k \ge k_0$ then
\be 
\nu(h^{k}_\Omega(\al)) = k \mu(\al) + \eta(\al). \label{stabdt1a}
\ee
By Proposition \ref{r.fact1}(c),  the twist coordinates of $h^{k}_\Omega(\al)$ are bounded. It follows that the last $3g-3$ coordinates of $\mu(\al)$ must  be 0.

Assume  $ i\le 3g-3$. By definition,  $\nu_i(h^{k}_\Omega(\al))=I(h^{k}_\Omega(\al), P_i)$.
\no{  There are an upper bound and an lower bound for the intersection numbers of Dehn twists of simple diagrams in \cite[Appendix to Section 4]{FLP}. By \cite[Proposition A1, Section 4]{FLP}, we have
$$ |I(h^{k}_\Omega(\al), P_i) - k \sum_{j=1}^l I(\al, o_j) I(o_j, P_i) | \le I(\al, P_i).$$
}
Comparing the slope of $\nu_i(h^{k}_\Omega(\al))$ in  \eqref{stabdt1a} and \eqref{eq.b1}, with $\beta= P_i$, we get 
$$ \mu_i(\al)= \sum_{j=1}^l I(\al, o_j) I(o_j, P_i) = \sum_{j=1}^l I(\al, o_j) \nu_i(o_j).$$
As the twists coordinates of $o_j$,  as well as the last  $3g-3$ coordinates of $\mu(\al)$, are all 0, we have
\be   \mu(\al) =  \sum_{j=1}^l I(\al, o_j) \nu(o_j).
\label{eq.stab5}
\ee
It remains to show that $h^{k}_\Omega(\al)$ is triangular for large $k$. Suppose $P_i, P_j, P_l$ bound a pair of pants $C$. An arc in $C$  having one endpoint in $P_j$ and one end point in $P_l$ is called a $(j,l)$-arc. An arc having both endpoints in $P_i$ is called an $(i,i)$-arc. 
\begin{lemma} Let $\beta$ be a $\cP$-taut  simple diagram. 

(a) If $\beta\cap C$ has a $(j,l)$-arc, then $\beta\cap C$ does not have $(i,i)$-arcs.

(b) If $\beta\cap C$ does not have $(i,i)$-arcs, then $\nu_i(\beta)\le   \nu_j(\beta)  + \nu_l(\beta)$.
\end{lemma}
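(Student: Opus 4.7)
The plan is to treat the two parts as a combination of a topological/separation argument (part (a)) and a simple counting of arc endpoints on the boundary of $C$ (part (b)).

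For part (a), the strategy is to recall the (well-known) classification of essential simple arcs in a pair of pants up to isotopy: the types $(i,j)$, $(i,l)$, $(j,l)$, $(i,i)$, $(j,j)$, $(l,l)$. First I would argue that since $\beta$ is $\cP$-taut, no component of $\beta \cap C$ can be boundary-parallel in $C$: such a component would co-bound an embedded disk with a sub-arc of $\partial C$ whose interior meets $\cP$ only along the sub-arc of some $P_s$, and this disk provides the standard isotopy that reduces $|\beta \cap \cP|$, contradicting tautness. Consequently, any $(i,i)$-arc in $\beta \cap C$ is essential, and it is a standard fact that an essential $(i,i)$-arc in $C$ is separating in $C$, with $P_j$ and $P_l$ lying in different components of its complement (the two non-boundary-parallel isotopy classes coincide in $C$ up to relabeling; in particular a non-boundary-parallel $(i,i)$-arc must separate the other two boundary components). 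Any $(j,l)$-arc must then meet any such $(i,i)$-arc topologically. But all components of $\beta \cap C$ are disjoint, so the two types cannot coexist, giving (a).

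For part (b), let $a_{xy}$ denote the number of arc components of $\beta \cap C$ with one endpoint on $P_x$ and the other on $P_y$ (for $x,y \in \{i,j,l\}$). Since $\beta$ is $\cP$-taut, $\nu_s(\beta) = I(\beta, P_s) = |\beta \cap P_s|$, and every point of $\beta \cap P_s$ is an endpoint from the $C$-side of exactly one arc of $\beta \cap C$. Counting endpoints on each boundary component yields
\[
\nu_i(\beta) = 2 a_{ii} + a_{ij} + a_{il}, \qquad \nu_j(\beta) \ge a_{ij}, \qquad \nu_l(\beta) \ge a_{il}.
\]
(Closed-curve components of $\beta$ lying in $C$ are essential and hence isotopic to a boundary $P_s$, and so contribute $0$ to $\nu_s$ and to the above counts after a taut isotopy.) Under the hypothesis $a_{ii}=0$, the first identity becomes $\nu_i(\beta) = a_{ij}+a_{il}$, and combining with the two inequalities gives $\nu_i(\beta) \le \nu_j(\beta) + \nu_l(\beta)$.

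The main subtlety is not in part (b), which is purely combinatorial, but in making part (a) rigorous: namely verifying that $\cP$-tautness really does rule out boundary-parallel $(i,i)$-arcs, and that essential $(i,i)$-arcs separate $P_j$ from $P_l$. Both follow from a short innermost-disk argument in $C$, but they should be stated cleanly so the disjointness contradiction between a $(j,l)$-arc and an $(i,i)$-arc is immediate. Everything else is routine.
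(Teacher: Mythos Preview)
Your proof is correct. It differs from the paper's in that the paper simply quotes from \cite{FLP} the closed formulas
\[
\#\{(j,l)\text{-arcs}\}=\max\!\Big(0,\tfrac{\nu_j(\beta)+\nu_l(\beta)-\nu_i(\beta)}{2}\Big),\qquad
\#\{(i,i)\text{-arcs}\}=\max\!\Big(0,\tfrac{\nu_i(\beta)-\nu_j(\beta)-\nu_l(\beta)}{2}\Big),
\]
from which (a) and (b) are immediate: a positive $(j,l)$-count forces $\nu_i<\nu_j+\nu_l$, hence a zero $(i,i)$-count; and a zero $(i,i)$-count forces $\nu_i\le\nu_j+\nu_l$. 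Your argument instead proves (a) by the separation property of an essential $(i,i)$-arc and (b) by a direct endpoint count. The paper's route is shorter if one is willing to import the FLP formulas; your route is self-contained and in fact re-derives exactly the information needed from those formulas without stating them in full. Either is fine here.
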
 \label{r.arcs}
\begin{proof}
Both statements follow from the well-known facts \cite{FLP}:\\
$\bullet$  The number of $(j,l)$-arcs is $\max(0, (\nu_j(\beta)  + \nu_l(\beta) - \nu_i(\beta))/2)$,\\
$\bullet$  The number of $(i,i)$-arcs is $\max(0, ( \nu_i(\beta)-\nu_j(\beta)  - \nu_l(\beta) )/2)$.
 \end{proof}
 Note that $\Omega\cap C$ consists of 3 arcs,  a $(j,l)$-arc  $a_i$,  a $(k,i)$-arc $a_j$, and a $(j,i)$-arc $a_k$, see Figure \ref{fig:ppants}. For each $s=i,j,l$ let $m_s$ be the intersection number of $\al$ with the component of $\Omega$ containing $a_s$. By \eqref{stabdt1a} and \eqref{eq.stab5}, for $k \ge k_0$ we have
 \begin{align}
 \nu_i(h^k_\Omega(\al))&= k(m_j + m_l) + \eta_i(\al) \label{eq.b2} \\
 \nu_j(h^k_\Omega(\al))&= k(m_i + m_l) + \eta_j(\al) \notag \\
 \nu_l(h^k_\Omega(\al))&= k(m_i + m_j) + \eta_l(\al). \notag
 \end{align}
 It follows that
 \be \nu_j(h^k_\Omega(\al)) + \nu_l(h^k_\Omega(\al)) - \nu_i(h^k_\Omega(\al)) = 2 k m_i + \eta_l(\al)+ \eta_j(\al) - \eta_i(\al).\ee
 Hence if $m_i >0$ then, for sufficiently large $k$ we have
 \be  
 \nu_j(h^k_\Omega(\al)) + \nu_l(h^k_\Omega(\al)) - \nu_i(h^k_\Omega(\al)) \ge 0. \label{eq.in2}
 \ee

 Suppose now $m_i=0$. Let $o_s$ be the component of $\Omega$ containing $a_i$. Then $I(\al, o_s)=m_i=0$ and we can assume $\al \cap o_s= \emptyset$. Since $o_s$ is a component of $\Omega$, for any $k>0$ we also have $h^k_\Omega(\al) \cap o_s= \emptyset$.
 Let $\beta= h^k_\Omega(\al) \cup o_s$, then $\beta$ has a $(j,l)$-arc in $C$, which is $a_i$. By Lemma \ref{r.arcs}(a), $\beta$ does not have $(i,i)$-arcs. As $h^k_\Omega(\al)$ is a sub-diagram of  $\beta$, it does not have an $(i,i)$ arc neither. By Lemma \ref{r.arcs}(b), we have \eqref{eq.in2}. Thus in all cases, we have \eqref{eq.in2} for large enough $k$.
 
 It remains to show that  for $k$ large, $h^k_\Omega(\al)$ does not have a component isotopic to $P_i$. Suppose $h^{k_1}_\Omega(\al)$ has a component isotopic  to $P_i$ for some $k_1$. As $P_i$ has non-trivial intersection with the components of $\Omega$ which contains $a_j$ and $a_l$, we have $m_j, m_l >0$. From \eqref{eq.b2} it follows for large $k$ we  have $\nu_i(h^k_\Omega(\al)) >0$, implying $h^k_\Omega(\al)$ does not have components isotopic to $P_i$. This completes the proof of Theorem \ref{thm.stab}. \qed

\subsection{More on Theorem \ref{thm.stab}} We  call $\tnu(\al):= (\mu(\al),\eta(\al))\in \BZ^{12g-12}$ the {\em stable DT-coordinate} of $\al\in \cS$ with respect to the coordinate datum $(\cP, \cD)$.
Recall that $\bA$ is the subgroup of $\BZ^{6g-6}$ generated by the monoid $\cA$. Let $\vec 0\in \BZ^{3g-3}$ be the element having all $0$'s as entries, and $\bode_i\in \BZ^{3g-3}$ be the element having all $0$'s as entries except a $1$ in the $i$-th entry.
\begin{prop} \label{r.aden}
(a) The map $\tnu:\cS \to \BZ^{12g-12}$  is injective.

(b) For any $\al\in \cS$ one has $\eta(\al) \in \bA$.

(c) If $I(\al, \Omega)=0$ then $\eta(\al) = \nu(\al)$ and $\mu(\al) =(\vec0, \vec 0)$.

(d) For all $k\in \BZ$ one  has 
\be 
\eta(h^k_\Omega(\al))= \eta(\al) + k \sum_{i=1}^l I(\al, o_i)\nu(o_i). \label{eq.eta}
\ee

(e) One has $\eta(P_j) = (\vec 0, - \bode_j)$.
\end{prop}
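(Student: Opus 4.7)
\emph{The plan.} I aim to derive all five parts from Theorem \ref{thm.stab} by exploiting two basic facts: that $h_\Omega$ is a composition of Dehn twists about the components $o_j\subset\Omega$, so it preserves each intersection number $I(\,\cdot\,,o_j)$; and that if $I(\al,\Omega)=0$ then $\al$ can be isotoped off $\Omega$, whence $h_\Omega$ fixes $\al$ up to isotopy. It is natural to prove (d) first, then deduce (a) and (b) from it, and finally prove (c) and use it to settle (e).

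For (d), I apply Theorem \ref{thm.stab} to both $\al$ and $h^k_\Omega(\al)$: for sufficiently large $m$,
\[
 \nu\bigl(h^{m+k}_\Omega(\al)\bigr) \;=\; (m+k)\mu(\al)+\eta(\al) \;=\; m\,\mu\bigl(h^k_\Omega(\al)\bigr)+\eta\bigl(h^k_\Omega(\al)\bigr).
\]
Since $h_\Omega$ preserves each $I(\,\cdot\,,o_j)$, formula \eqref{eq.mu} gives $\mu(h^k_\Omega(\al))=\mu(\al)$, and comparing the two expressions yields $\eta(h^k_\Omega(\al))=\eta(\al)+k\mu(\al)$, which is (d). Part (a) then follows: if $\tnu(\al)=\tnu(\beta)$, then $\nu(h^k_\Omega(\al))=\nu(h^k_\Omega(\beta))$ for all large $k$, so injectivity of $\nu$ on $\cS$ combined with invertibility of $h^k_\Omega$ forces $\al=\beta$. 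For (b), I rewrite $\eta(\al)=\nu(h^k_\Omega(\al))-k\mu(\al)$ for any large $k$; both $\nu(h^k_\Omega(\al))$ and each $\nu(o_j)$ lie in $\cA\subset\bA$, so $\mu(\al)=\sum_j I(\al,o_j)\nu(o_j)\in\bA$ and hence $\eta(\al)\in\bA$.

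For (c), the hypothesis $I(\al,\Omega)=0$ lets me isotope $\al$ to be disjoint from $\Omega$, so $h^k_\Omega(\al)=\al$ as isotopy classes for every $k$; plugging into \eqref{stabdt} forces $\mu(\al)=\vec 0$ and $\eta(\al)=\nu(\al)$, automatically consistent with \eqref{eq.mu}. For (e), the key geometric observation is that $P_j\subset N(\cD)$: by Figure \ref{ppants}, each of the two pants adjacent to $P_j$ contributes a red hexagon whose boundary contains an arc of $P_j$, and the two such arcs together cover $P_j$. Hence $I(P_j,\Omega)=0$, and (c) gives $\mu(P_j)=\vec 0$ and $\eta(P_j)=\nu(P_j)$, which equals $(\vec 0,-\bode_j)$ by the DT convention of \cite{FKL} (vanishing intersection slots, and twist coordinate $-1$ at $P_j$ for a single parallel copy). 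The only real subtlety lies in this last step, namely verifying the sign of the self-twist coordinate in the chosen convention; the remainder of the proposition is a formal consequence of Theorem \ref{thm.stab} and invariance of $I(\,\cdot\,,o_j)$ under $h_\Omega$.
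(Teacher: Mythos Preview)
Your arguments for (a)--(d) are correct and essentially match the paper's. However, your proof of (e) contains a genuine error.

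The claim that $P_j\subset N(\cD)$ is false. The dual graph $\cD$ meets $P_j$ only in the single transverse point $e_j\cap P_j$, so a regular neighborhood $N(\cD)$ meets $P_j$ only in a short arc around that point; indeed, in the proof of Lemma~\ref{r.index2} the paper sets $e_i:=P_i\cap\mathring N(\cD)$ and uses that this is an ideal \emph{arc}. Consequently $I(P_j,\Omega)\neq 0$: the paper states this explicitly in the proof of Theorem~\ref{thm.stab} (``$P_i$ has non-trivial intersection with the components of $\Omega$ which contain $a_j$ and $a_l$''). So part (c) does not apply to $P_j$. Your reading of Figure~\ref{ppants} is off: the red hexagon in each adjacent pair of pants meets $P_j$ in an arc, but the two arcs coincide (they are both the arc $P_j\cap N(\cD)$), rather than covering $P_j$.

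There is a second, related mistake: you assert $\nu(P_j)=(\vec 0,-\bode_j)$, but the DT convention used here (condition \eqref{eq.cA2}) forces the twist coordinate to be \emph{non-negative} when the corresponding intersection coordinate vanishes, so in fact $\nu(P_j)=(\vec 0,+\bode_j)$, as stated just before Lemma~\ref{r.index2}. The whole point of (e) is that $\eta(P_j)\neq\nu(P_j)$: a single application of $h_\Omega$ flips the sign of the twist. The paper's argument is to check directly that already for $k\ge 1$ the diagram $h_\Omega^k(P_j)$ is $P_j$-taut with its intersection with $\cD$ confined to the annulus around $P_j$, so the affine formula \eqref{stabdt} holds from $k=1$ on with constant term $(\vec 0,-\bode_j)$. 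You need an argument of this kind; (c) cannot do the work here.
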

\begin{proof} (a) If $(\mu(\al), \eta(\al))= (\mu(\beta), \eta(\beta))$, then \eqref{stabdt} shows $h^k_\Omega(\al)=h^k_\Omega(\beta) $ for large $k$. Applying $h^{-k}_\Omega$, we get $\al=\beta$.

(b) As $\eta(\al)= \nu(h^k_\Omega(\al)) - k \sum_i I(\al, o_i) \nu(i)$ is the difference of two elements in $\cA$, we have $\eta(\al) \in \bA$.

(c) If $I(\al, \Omega)=0$ then $h^k_\Omega(\al)= \al$ for all $k$. The result follows.

(d) Applying $h_\Omega^j$ to \eqref{stabdt}, noting that $h_\Omega(o_i)=o_i$,  we get 
$$\eta(h^j_\Omega(\al))= \eta(\al) + j\sum_{i=1}^l I(\al, o_i)\nu(o_i),$$
 which is \eqref{eq.eta} with $k=j >0$. Replacing $\al$ by $h^{-k}_\Omega$ in \eqref{eq.eta} we get \eqref{eq.eta} with $k$ replaced by $-k$.

(e) 
Note that  $\nu(P_j) = (\vec 0,  \bode_j)$. After twisting once along $\Omega$ its  twist coordinate becomes $-\bode_j$,
\be
\nu(h^k_\Omega(P_j))= (\vec 0, - \bode_j)+ k \sum_{i=1}^l I(P_j, o_i)\nu(o_i)
\ee
Since $h_{\Omega}(P_j)$ makes no bigons with $P_j$ and its intersection with $D$ is contained inside the annulus around $P_j$, therefore its DT coordinates change linearly, and the result follows.
\end{proof}

 \no{In the next section we will develop a similar concept, the stable residue of a simple diagram, which also takes into account the arithmetic of a root of unity.

\begin{prop}[Addendum to Theorem \ref{thm.stab}] Let $S$ be a simple diagram on surface $F$ with Dehn-Thurston data $(\cP, \cD)$. Suppose that $P_j\in \cP$ is one of the pants curves. If there is a component $a$ of $\Omega$ such that $i(S,a) i(a,P_j) \neq 0$,  i.e., $i(S,a) n_j(a)\neq 0$, then $j$-th stable twist coordinate of $S$ is non-positive. That is, the $(3g-3 +j)$-th component of $Y_S$ is non-positive.

On the other hand, if $i(S,a) i(a,P_j) =0 $ for all components $a$ of $\Omega$, then the $3g-3+j$-th component of $Y_s$ is equal to $t_j(S)$. 
\end{prop}

}

\def\KFs{\KF^*}
\def\ZFs{\ZF^*}
\def\dz{\deg_\zeta}

\section{Independence over the center}\label{residue}
We formulate a criterion for independence of a collection of elements of $\KF$ over the center.
Throughout this section we fix a finite type surface $F= F_{g,p}$ with negative Euler characteristic equipped with coordinate datum. One can define the coordinates $\nu: \cS \embed  \BZ^r$, where $r= 6g-6+3p$. The set of possible coordinates $\cA= \nu(\cS)$ is a submonoid of $\BZ^r$. Let $\bA$ denote the subgroup of $\BZ^r$ generated by $\cA$. For a root of unity $\zeta$ we also define the submonoid $\Az$ and its group $\bAz$ as in  Section \ref{prel}.

For a ring $R$ we denote by $R^*$ the set of non-zero elements of $R$. Since $\KF$ is a domain, $\KFs$ is a monoid under multiplication.
\subsection{General result}\label{genres}The degree of  polynomials in one variable  satisfies the following two  properties: for non-zero polynomials $x$ and  $y$,
\begin{itemize}
\item[(i)] $ \deg(xy) = \deg(x) + \deg(y)$ (monoid homomorphism),
\item[(ii)] if $\deg(x_i)$ for $i=1, \dots, d$ are pairwise distinct, then $\sum_i x_i \neq 0$.
\end{itemize}
For a domain $R$, a {\em degree  map} is a map $\deg: R^* \to M$, where $M$ is a monoid, satisfying the above two properties.
\no{ Note that  (iii) is implied by
 the following 
\begin{itemize}
\item[(iii)] if $\deg(x) > \deg(y)$, then $x+y \neq 0$ and $\deg(x+y) = \deg(x)$.
\end{itemize}
}

\begin{theorem}
\label{r.gen1} Given a finite type surface $F= F_{g,p}$ with negative Euler characteristic and a fixed coordinate datum, let $\zeta$ be a root of 1. 
There exists a degree map
 \be\deg: \KFs \to \bA\ee 
 such that 
 $ \deg(\ZFs) \subset \bAz$. Moreover, the composition 
 \be \deg_\zeta: \KFs \overset{ \deg} \longrightarrow  \bA \to  \bA/\bAz= \Rz
 \label{eq.dzdef}
 \ee
  is a surjective monoid homomorphism.
\end{theorem}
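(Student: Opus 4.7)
The plan is to construct $\deg$ as a leading-term map for the basis $\{S(\bn)\}_{\bn\in\cA}$ of $\KF$ with respect to a translation-invariant total order on $\bA\subset\BZ^r$, chosen so that the product formulas of Propositions~\ref{r.prod1} and~\ref{prsi} force it to be a monoid homomorphism to $(\bA,+)$.

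For the open case $(p\ge 1)$, I would take the graded lexicographic order $\prec$ on $\BZ^r$: $\bn\prec\bn'$ iff $|\bn|=\sum_i n_i<|\bn'|$, or $|\bn|=|\bn'|$ and $\bn<_{\mathrm{lex}}\bn'$. Since $|\cdot|$ is additive, $\prec$ is translation invariant, and it refines the intersection filtration $\{F_k^\fA\}$ of Subsection~\ref{sec.filter}. For $0\ne x=\sum c_\bn S(\bn)$ put $\deg(x):=\max_\prec\{\bn\in\cA:c_\bn\ne 0\}\in\bA$. Property (ii) is immediate: among distinct $\deg(x_i)$ the $\prec$-maximum is achieved by a unique summand, whose leading coefficient no other term can cancel. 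For property (i), expand $xy=\sum c_\bn d_\bm\,S(\bn)S(\bm)$ and apply Proposition~\ref{r.prod1}: $S(\bn)S(\bm)=\zeta^{j(\bn,\bm)}S(\bn+\bm)$ modulo strictly $\prec$-smaller terms. Translation invariance forces $(\bn,\bm)=(\deg x,\deg y)$ to be the unique pair maximizing $\bn+\bm$ in $\prec$, so the leading coefficient of $xy$ is $c_{\deg x}d_{\deg y}\zeta^j\ne 0$.

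The main obstacle is the closed case $(p=0,\,g\ge 2)$, where Proposition~\ref{prsi} only applies to triangular coordinates $\cAT\subsetneq\cA$. I would resolve this using the stable DT coordinates $\tnu=(\mu,\eta)$ of Theorem~\ref{thm.stab} together with the fact that each $h_\Omega^k$ is an algebra automorphism of $\KF$. Define a total order $\lessdot$ on $\cS$ by $\al\lessdot\beta$ iff $\nu(h_\Omega^k(\al))\prec\nu(h_\Omega^k(\beta))$ in graded-lex for all sufficiently large $k$; equivalently, compare $(\mu(\al),\eta(\al))$ lexicographically with each slot ordered by graded-lex on $\BZ^{6g-6}$. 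The map $\tnu$ is injective by Proposition~\ref{r.aden}(a), so $\lessdot$ is total. Put $\deg(x):=\eta(\al_0(x))\in\bA$ (via Proposition~\ref{r.aden}(b)), where $\al_0(x)$ is the $\lessdot$-maximum in the simple-diagram support of $x$. For the monoid property, given $x,y\in\KFs$, choose $k$ large enough that $h_\Omega^k(x)$ and $h_\Omega^k(y)$ are supported on $\cAT$; the open-case argument applied with Proposition~\ref{prsi} to $h_\Omega^k(xy)=h_\Omega^k(x)h_\Omega^k(y)$ identifies its graded-lex leading coordinate as $k(\mu_x+\mu_y)+(\eta_x+\eta_y)$. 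Injectivity of $\tnu$ then forces $\tnu(\al_0(xy))=(\mu_x+\mu_y,\eta_x+\eta_y)$, yielding $\deg(xy)=\deg(x)+\deg(y)$. Property (ii) transfers, since distinct $\eta(\al_0(x_i))$ force distinct $\nu(h_\Omega^k(\al_0(x_i)))$ for all $k$ sufficiently large.

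It remains to verify $\deg(\ZFs)\subset\bAz$ and surjectivity of $\deg_\zeta$. Using the Chebyshev basis $\{T(\al):\al\in\Sz\}$ of Theorem~\ref{center} and writing $\al=\prod_j C_j^{l_j}$ for disjoint non-isotopic loops, the expansion $T_k(x)=x^k+(\text{strictly lower powers})$ together with additivity of $\nu$ (resp.\ $\tnu$) on disjoint unions gives $T(\al)=S(\nu(\al))+(\text{$\prec$- or $\lessdot$-smaller terms})$, so $\deg(T(\al))=\nu(\al)$ in the open case and $\deg(T(\al))=\eta(\al)$ in the closed case. In the open case $\nu(\Sz)=\Az\subset\bAz$ directly. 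In the closed case $\al=\gamma^m$ with $\gamma\in\cS$ when $\ord(\zeta)\not\equiv 0\bmod 4$ and $\gamma\in\Se$ otherwise, and $\eta(\al)=m\eta(\gamma)$ lies in $m\bA=\bAz$ in the first case and in $m\bAe=\bAz$ in the second, the latter using that every $I(\gamma,o_i)$ is even for $\gamma\in\Se$ (by definition of $\Se$), so $\mu(\gamma)\in 2\bA\subset\bAe$, and that each $h_\Omega^k$ preserves the mod-$2$ homology class, so $\nu(h_\Omega^k(\gamma))\in\bAe$. Finally, $\deg_\zeta$ is a monoid homomorphism into the finite group $\Rz=\bA/\bAz$ (Propositions~\ref{r.dim1}, \ref{r.dim1a}), so its image is automatically a subgroup; surjectivity then reduces by Lemma~\ref{r.surj} to showing $\deg(\KFs)+\bAz=\bA$. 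In the open case this is immediate from $\deg(S(\bn))=\bn$ covering $\cA$. In the closed case I would combine the relations $\eta(\al)=\nu(\al)$ for diagrams disjoint from $\Omega$, $\eta(P_i)=-\nu(P_i)$, and $\eta(h_\Omega(\al))-\eta(\al)=\mu(\al)$ of Proposition~\ref{r.aden}(c)--(e) with the surjectivity $\cA\twoheadrightarrow\bA/\bAz$ from Lemma~\ref{r.surj} to conclude that the subgroup generated by $\eta(\cS)$ hits every coset of $\bAz$ in $\bA$.
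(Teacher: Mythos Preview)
Your approach is essentially the paper's: leading-term degree for a graded-lex order in the open case, and stable DT coordinates via $h_\Omega^k$ in the closed case. The open case matches exactly.

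In the closed case there is a genuine technical mismatch. You carry over the full-sum graded-lex order $\prec$ from the open case, but the filtration in Proposition~\ref{prsi} is indexed by the \emph{partial} sum $|\bn|_1=\sum_{i=1}^{3g-3}n_i$ (intersection with the pants curves only), not the full sum $\sum_{i=1}^{6g-6}n_i$. A ``lower term'' of $S(\bn)S(\bn')$ has strictly smaller $|\cdot|_1$ but its twist coordinates are uncontrolled, so it need not be $\prec$-smaller than $S(\bn+\bn')$ in your order; hence your ``open-case argument applied with Proposition~\ref{prsi}'' does not identify the $\prec$-leading term of $h_\Omega^k(x)h_\Omega^k(y)$. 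The fix is precisely what the paper does: in the closed case grade by $|\cdot|_1$, i.e.\ use the order induced from $\kappa(\bn)=(\sum_{i\le 3g-3}n_i,\,n_1,\dots,n_{6g-6})$, so that the remainder in Proposition~\ref{prsi} is automatically strictly smaller. (Relatedly, your ``equivalently'' description of $\lessdot$ is off: stabilizing a linear-in-$k$ lex comparison interleaves the slots as $(\|\mu\|,\|\eta\|,\mu_1,\eta_1,\mu_2,\eta_2,\dots)$, not block-by-block; with $|\cdot|_1$ this becomes the paper's embedding $\tilde\kappa$.)

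Your verification that $\deg(\ZFs)\subset\bAz$ in the closed case, via $\eta(\gamma^m)=m\eta(\gamma)$ and the observation that $\gamma\in\Se$ forces $\nu(h_\Omega^k(\gamma))\in\Ae$ and $\mu(\gamma)\in 2\bA\subset\bAe$, is a clean alternative to the paper's Lemma~\ref{r.l2}, which instead takes $k\in 2m\BZ$ so that $k\mu(\al)\in\Az$ directly. Your surjectivity sketch for the closed case has the right pieces but is vague; the paper's Lemma~\ref{r.l3} makes it explicit: diagrams lying in $N(\cD)$ (hence disjoint from $\Omega$) realize $(\bA)_1\oplus\{\vec 0\}$ via $\eta=\nu$, and the $P_i$ realize $\{\vec 0\}\oplus\BZ^{3g-3}$ via $\eta(P_i)=(\vec 0,-\boldsymbol\delta_i)$, so $\overline{\eta(\cS)}=\bA$.
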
  
We will construct the map $\deg$ in later subsections. We want to mention an important corollary that we will use in the future.
\begin{cor} \label{r.dimlow}
Assume the hypothesis of Theorem \ref{r.gen1}. 

(a)  If $x_1, \dots, x_d\in \KFs$ such that $\dz(x_1), \dots, \dz(x_d)$ are pairwise distinct, then $x_1, \dots, x_d$ are linearly independent over $\ZF$.

(b) One has $\dim_\ZF \KF \ge |\Rz| = D_\zeta(F)$.
\end{cor}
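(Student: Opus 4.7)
The plan is to derive both parts of the corollary as formal consequences of the three properties of the degree map packaged in Theorem \ref{r.gen1}: the monoid homomorphism property of $\deg$, the inclusion $\deg(\ZFs)\subset \bAz$, and the non-degeneracy property (ii) that distinct degrees prevent cancellation. Essentially no new input beyond Theorem \ref{r.gen1} and the residue computations in Propositions \ref{r.dim1} and \ref{r.dim1a} is needed.

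For part (a), I would argue by contradiction. Suppose $\sum_{i=1}^d c_i x_i = 0$ for some $c_i \in \ZF$ not all zero. After discarding the indices with $c_i=0$ and relabeling, we may assume $c_i \in \ZFs$ for every $i$. Since $\KF$ is a domain and each $x_i \in \KFs$, each product $c_i x_i$ lies in $\KFs$, and the monoid-homomorphism property of $\deg$ gives $\dz(c_i x_i) = \dz(c_i) + \dz(x_i)$. By Theorem \ref{r.gen1}, $\deg(c_i) \in \bAz$, hence $\dz(c_i)=0$ in $\Rz$, so $\dz(c_i x_i) = \dz(x_i)$. Thus the residues $\dz(c_i x_i)$ remain pairwise distinct, and lifting back to $\bA$ the values $\deg(c_i x_i)$ are also pairwise distinct. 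Property (ii) of the degree map then forces $\sum_i c_i x_i \neq 0$, contradicting the assumed relation.

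For part (b), I would use the surjectivity half of Theorem \ref{r.gen1}. For each residue class $r \in \Rz$ choose a representative $x_r \in \KFs$ with $\dz(x_r)=r$; this is possible because $\dz$ is surjective. The family $\{x_r : r \in \Rz\}$ has pairwise distinct $\dz$-values by construction, so by part (a) it is $\ZF$-linearly independent in $\KF$. Because $\KF$ is a domain with center $\ZF$, the localization map $\KF \to \tKF = \KF\otimes_\ZF \tZF$ is injective and preserves linear independence, so the $x_r$ remain $\tZF$-linearly independent in $\tKF$. This shows $\dim_\ZF \KF \ge |\Rz|$, and the identity $|\Rz|=D_\zeta(F)$ then follows from Proposition \ref{r.dim1} in the open case and Proposition \ref{r.dim1a} in the closed case.

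The real work is not in this corollary but in Theorem \ref{r.gen1}, whose construction of $\deg$ will require packaging the filtration arguments (Proposition \ref{r.filt}), the leading-term product formulas (Propositions \ref{r.prod1} and \ref{prsi}), and, in the closed-surface case, the stabilization of Dehn--Thurston coordinates under $h_\Omega$ (Theorem \ref{thm.stab} and Proposition \ref{r.aden}) into a genuine monoid map on $\KFs$ landing in $\bA$ and sending $\ZFs$ to $\bAz$. Once that is done, the corollary above falls out immediately.
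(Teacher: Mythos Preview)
Your proof is correct and follows essentially the same approach as the paper: both argue that for $z_i\in\ZFs$ the products $z_ix_i$ have pairwise distinct $\deg$-values in $\bA$ (since $\deg(z_i)\in\bAz$ forces the residues $\dz(z_ix_i)=\dz(x_i)$ to remain distinct), then invoke property~(ii), and both derive~(b) from surjectivity of $\dz$ together with Propositions~\ref{r.dim1} and~\ref{r.dim1a}. You spell out the contradiction framing and the passage from $\ZF$-independence to $\tZF$-independence more explicitly than the paper does, but there is no substantive difference in the argument.
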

\begin{proof} (a) 
Suppose $z_1, \dots z_d\in \ZFs$. 
From the assumption, the elements $\deg(z_1 x_1), \dots, \deg(z_d x_d)$ are pairwise distinct in $\bA$. By Property (ii)  of degree maps, the sum $\sum z_i x_i \neq 0$.

(b) Since $\dz( \KFs )= \Rz(F)$, from  (a) we have $\dim_\ZF \KF \ge |\Rz|$, which is equal to $D_\zeta(F)$ by Propositions \ref{r.dim1} and \ref{r.dim1a}.
\end{proof}

As $\dz(\ZF)=0$, the map $\dz$ extends to a surjective group homomorphism, also denoted by $\dz$:
\be 
\dz: \tKF^* \to \bA/\bAz= \Rz.
\label{eq.Rd}
\ee
\subsection{Lead term}
Since $\cS$ is a $\BC$-basis of $\KF$, for every $x\in \KFs$ there is a  unique set $\supp(x) \subset \cS$ such that $x$ has the presentation 
\be 
x = \sum_{\al \in \supp(x)} c_\al \al, \quad 0\neq c_\al \in \BC.
\label{eq.supp}
\ee
If $\le$ is a total order on $\cS$, then $c_\al \al$, where $\al =\max \supp(x)$, is called the $(\le)$-lead term of $x$, and we can write
\be 
x = c_\al \al + G_<(\al),
\ee
where $G_<(\al)$ is the $\BC$-span of $\{ \beta\in \cS \mid \beta < \al\}$.

\subsection{Proof of Theorem \ref{r.gen1} for open surfaces} Suppose $F$ is an open
 finite type surface with negative Euler characteristic and coordinate datum $\{e_1, \dots, e_r\}$.
Let $\iota: \BZ^r \embed \BZ^{r+1}$ be the embedding 
$$ \iota(\bn) = (|\bn|, \bn),$$
where $|\bn| = \sum n_i$. Let $\unlhd$ be the total order on $\cS$ and $\cA$ induced from the  lexicographic order on $\BZ^{r+1}$ via the embeddings
$$ \cS \overset{\nu}\longrightarrow  \cA \embed \bA \embed \BZ^r \overset{\iota}\embed\BZ^{r+1}.$$

The order $\unlhd$ makes $\bA$ an ordered group. Define 
$\deg: \KFs \to \bA$ by 
\be 
   \deg(x) = \nu(\al),\quad  \text{where} \ \al = \max \supp(x).
   \label{eq.defdeg1}
   \ee
Suppose $x_1\dots, x_n \in \KFs$. If $\deg(x_1), \dots, \deg(x_d)$ are distinct, then $\sum_i x_i\neq 0$ and $\deg(\sum_i x_i) = \max \{ \deg(x_i)\}$.
 Proposition \ref{r.prod1} implies that  $\deg(xy)= \deg(x) + \deg(y)$ for any $x,y\in\KF$. Hence $\deg$ is a degree map.
By definition $\deg(\cS) = \cA$, which by Proposition \ref{r.surj} surjects onto $\bA/\bAz=\Rz$. Since $\cS\subset \KFs$, we also have $\dz(\KFs)= \Rz$. This completes the proof of Theorem \ref{r.gen1} for open surface. 

\def\tnu{\tilde \nu}
\subsection{Proof of Theorem \ref{r.gen1} for closed surfaces} Suppose $F=F_{g,0}$ with $g\ge 2$, equipped with coordinate datum $(\cP, \cD)$.
In this case $r=6g-6$. Let $\kappa: \BZ^r \embed \BZ^{r+1}$ be the group embedding  given by
$$ \kappa(n_1,\dots, n_r) = (\sum_{i=1}^{3g-3} n_i, n_1, n_2, \dots, n_r).$$
Let the order $\le$ on $\cS$ and $\BZ^r$ be the one induced from the lexicographic order of $\BZ^{r+1}$ via the embeddings
$$ \cS \overset{\nu}\longrightarrow  \cA \embed \bA \embed \BZ^r \overset{\kappa}\embed\BZ^{r+1}.$$
Since the first component is used to define the filtrations appeared in Proposition \ref{prsi},  for $\bn,\bn'\in \cAT$, there is $j$ such that
\be 
S(\bn) S(\bn')= \zeta^j S(\bn+\bn') +  G_<(\bn+\bn'), \label{eq.sum2}
\ee
where $G_<(\bn+\bn')$ is a $\BC$-span of $\{\al\in \cS \mid\al < S(\bn+\bn')\}$. This holds only for triangular $\bn, \bn'$. 
There is a better order on $\cS$.

\begin{lemma} There is a total order $\unlhd$  which makes $\BZ^r \times \BZ^r$ an ordered group,  induces an order on $\cS$ 
via $\tnu: \cS \embed \BZ^r \times \BZ^r$, and has a  
property that
 $\al \unlhd \beta$ if and only if 
$ h^k_\Omega(\al) \le h^k_\Omega(\beta)$ for sufficiently large $k$.
\end{lemma}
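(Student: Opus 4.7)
My plan is to define $\unlhd$ on $\BZ^r \times \BZ^r$ as the pullback of a lex order on a larger group under an injective $\BZ$-linear homomorphism that interleaves the $\mu$- and $\eta$-coordinates in the right way. Write $s(n) := \sum_{i=1}^{3g-3} n_i$, so that $\kappa(n) = (s(n), n_1, \ldots, n_r)$, and define
\[
\Phi : \BZ^r \times \BZ^r \hookrightarrow \BZ^{2(r+1)}, \qquad \Phi(\mu,\eta) = \bigl( s(\mu),\, s(\eta),\, \mu_1, \eta_1,\, \mu_2, \eta_2,\, \ldots,\, \mu_r, \eta_r \bigr).
\]
Declare $(\mu_1,\eta_1) \unlhd (\mu_2,\eta_2)$ iff $\Phi(\mu_2 - \mu_1,\, \eta_2 - \eta_1) \ge 0$ in the lex order on $\BZ^{2(r+1)}$. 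Since $\Phi$ is an injective group homomorphism and lex is a total group ordering, the pullback $\unlhd$ is a total group order on $\BZ^r \times \BZ^r$. Composing with the injection $\tnu:\cS \hookrightarrow \BZ^r\times\BZ^r$ of Proposition \ref{r.aden}(a) yields a total order on $\cS$, which I also denote $\unlhd$.

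For the iff, fix $\al,\beta\in\cS$ and set $u := \mu(\beta)-\mu(\al)$, $v := \eta(\beta)-\eta(\al)$. By Theorem \ref{thm.stab}, for all sufficiently large $k$ one has $\nu(h^k_\Omega(\beta)) - \nu(h^k_\Omega(\al)) = ku + v$, so $h^k_\Omega(\al) \le h^k_\Omega(\beta)$ for large $k$ iff $\kappa(ku+v) \ge 0$ in lex on $\BZ^{r+1}$ for large $k$. The coordinates of $\kappa(ku+v)$ are the affine functions $k s(u) + s(v)$ and $k u_i + v_i$ ($1\le i\le r$). The eventual sign of an affine function $ak+b$ is $\operatorname{sgn}(a)$ when $a\neq 0$, and $\operatorname{sgn}(b)$ otherwise, and the coordinate is eventually zero exactly when $a=b=0$. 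Scanning the coordinates of $\kappa(ku+v)$ from left to right, the eventual lex sign is therefore determined by scanning the interleaved sequence $\bigl(s(u),\, s(v),\, u_1, v_1,\, \ldots,\, u_r, v_r\bigr) = \Phi(u,v)$ left to right, which is nothing but the lex sign of $\Phi(u,v)$. Hence $h^k_\Omega(\al) \le h^k_\Omega(\beta)$ for large $k$ iff $\Phi(u,v) \ge 0$ iff $\al \unlhd \beta$.

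The main obstacle, and the reason $\Phi$ is not simply the concatenation $(\kappa(\mu), \kappa(\eta))$ (i.e.\ a two-stage lex: first compare $\mu$ under $\le$, then $\eta$ under $\le$), is that the order $\le$ on $\BZ^r$ is not Archimedean: a strict $\le$-positivity of $u$ does \emph{not} force $ku$ to dominate an arbitrary fixed $v$. The interleaving built into $\Phi$ is the correct remedy: whenever the $k$-slope in some coordinate of $\kappa(ku+v)$ vanishes, the immediate tiebreaker is the constant term at \emph{that same} coordinate, not the $k$-slope of the next coordinate. Once this bookkeeping is set up, the argument is a purely mechanical coordinate-by-coordinate verification.
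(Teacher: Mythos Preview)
Your proof is correct and takes essentially the same approach as the paper: your map $\Phi$ is exactly the paper's embedding $\tilde\kappa$ (with $s(\cdot)$ playing the role of $\|\cdot\|_1$), and you have simply filled in the verification that the paper leaves to the reader. Your closing paragraph explaining why the na\"ive two-stage lex order would fail is a nice addition that the paper does not spell out.
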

\def\tk{\tilde \kappa}
\begin{proof} For $\bn=(n_1,\dots, n_{6g-6})$ let $\|\bn\|_1= \sum_{i=1}^{3g-3} n_i$. For $\bn, \bm \in \BZ^r$,
$$\kappa( k \bn +\bm)=  (k\|\bn\|_1 + \|\bm\|_1, kn_1+ m_1, kn_2+ m_2,\dots, kn_r + m_r) .$$
Define the embedding $\tk:\BZ^r \times \BZ^r \embed \BZ^{2r+2}$ by
$$ \tk(\bn,\bm)= (\|\bn\|_1 ,\|\bm\|_1, n_1, m_1, n_2, m_2, \dots, n_r , m_r) .$$

The order $\unlhd$ on $\BZ^r \times \BZ^r$ induced from the lexicographic order of $\BZ^{2r+2}$ via $\tk$ satisfies the lemma.

\end{proof}

From the definition, $c_\al \al$ is the ($\unlhd$)-lead term of $x\in \KFs$ if and only if $c_\al h^k_\Omega(\al)$ is the $(\le)$-lead term of $h^k_\Omega(x)$ for 
sufficiently large $k$. This yields an ordering that gives us control of lead terms of all diagrams, not just triangular ones.

From here we have the following.
\begin{lemma} \label{r.10}
Suppose $x\in \KFs$, $\al_k\in \cS$ and $0\neq c_{k} \in \BC$ such that for sufficiently large $k$ we have
$$ h^k_\Omega(x) = c_k \al_{k} + G_{<} (\al_k).$$
Then $c_k\al_k= c_\al h^k_\Omega(\al)$ for large $k$, where $c_\al \al$ is the $(\unlhd)$-lead term of $x$.
\end{lemma}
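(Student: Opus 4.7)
The plan is to expand $x$ in the basis $\cS$ and track the action of $h^k_\Omega$ on that expansion. Writing
\[ x = \sum_{\beta \in \supp(x)} c_\beta\, \beta, \qquad 0 \neq c_\beta \in \BC, \]
and applying the ring automorphism induced by the diffeomorphism $h^k_\Omega$ gives
\[ h^k_\Omega(x) = \sum_{\beta \in \supp(x)} c_\beta\, h^k_\Omega(\beta). \]
Since $h^k_\Omega$ is a diffeomorphism, it permutes isotopy classes of simple diagrams, so the elements $\{h^k_\Omega(\beta) : \beta \in \supp(x)\}$ are pairwise distinct members of $\cS$. Consequently this sum is already the expansion of $h^k_\Omega(x)$ in the basis $\cS$, and its $(\le)$-lead term is $c_\gamma\, h^k_\Omega(\gamma)$, where $\gamma$ is the $(\le)$-maximum of the set $\{h^k_\Omega(\beta) : \beta \in \supp(x)\}$.

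Next, let $c_\al \al$ denote the $(\unlhd)$-lead term of $x$, so that $\beta \lhd \al$ for every $\beta \in \supp(x) \setminus \{\al\}$. By the characterizing property of $\unlhd$ established in the preceding lemma, for each such $\beta$ there is a threshold $K_\beta$ with $h^k_\Omega(\beta) < h^k_\Omega(\al)$ whenever $k \ge K_\beta$. Because $\supp(x)$ is \emph{finite}, we may set $K = \max_{\beta \neq \al} K_\beta$; for every $k \ge K$ the element $h^k_\Omega(\al)$ is then the $(\le)$-maximum of $\{h^k_\Omega(\beta) : \beta \in \supp(x)\}$. Combining this with the previous paragraph, the $(\le)$-lead term of $h^k_\Omega(x)$ is $c_\al\, h^k_\Omega(\al)$ for all $k \ge K$.

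Finally, by hypothesis the $(\le)$-lead term of $h^k_\Omega(x)$ is also $c_k \al_k$. Uniqueness of the expansion of $h^k_\Omega(x)$ in the basis $\cS$ forces $\al_k = h^k_\Omega(\al)$ and $c_k = c_\al$ for all sufficiently large $k$, which is the claim.

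There is no serious obstacle: the entire content is that the order $\unlhd$ was rigged precisely so that taking the $(\unlhd)$-lead term commutes, eventually in $k$, with the operations of applying $h^k_\Omega$ and then taking the $(\le)$-lead term. The only point requiring care is to pass from the pointwise ``for sufficiently large $k$'' clause in the definition of $\unlhd$ to a uniform threshold $K$, which works because $\supp(x)$ is finite.
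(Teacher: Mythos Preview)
Your proof is correct and follows essentially the same approach as the paper. The paper dispatches the lemma in one line by appealing to the observation immediately preceding it (that $c_\al\al$ is the $(\unlhd)$-lead term of $x$ iff $c_\al h^k_\Omega(\al)$ is the $(\le)$-lead term of $h^k_\Omega(x)$ for large $k$); your argument is simply a careful unpacking of that observation, making explicit the use of finiteness of $\supp(x)$ to obtain a uniform threshold.
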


A crucial property of the $\unlhd$ order is that its lead term is a  monoid map.

\begin{lemma} Suppose $\al, \beta\in \cS$ and $\gamma= \max_\unlhd (\supp(\al\beta))$, then $\tnu(\gamma) = \tnu(\al) + \tnu(\beta)$, i.e. $\mu(\gamma)= \mu(\al) + \mu(\beta)$ and $\eta(\gamma)= \eta(\al) + \eta(\beta)$.
\label{r.sum3}
\end{lemma}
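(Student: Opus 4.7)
The plan is to apply the mapping class $h_\Omega^k$ to $\al\beta$ for sufficiently large $k$, reducing to the triangular case of Proposition~\ref{prsi} and then extracting $\tnu(\gamma)$ by comparing leading terms via Lemma~\ref{r.10}. By Theorem~\ref{thm.stab}, for all large $k$ both $h_\Omega^k(\al)$ and $h_\Omega^k(\beta)$ are triangular, with
\[
\nu(h_\Omega^k(\al)) = k\mu(\al)+\eta(\al), \qquad \nu(h_\Omega^k(\beta)) = k\mu(\beta)+\eta(\beta).
\]

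Since $h_\Omega^k$ is an algebra automorphism of $\KF$, we have $h_\Omega^k(\al\beta)=h_\Omega^k(\al)\cdot h_\Omega^k(\beta)$. Applying Proposition~\ref{prsi} to this triangular product yields an integer $j=j(k)$ with
\[
h_\Omega^k(\al\beta) \;\equiv\; \zeta^{j}\,S\bigl(\nu(h_\Omega^k(\al))+\nu(h_\Omega^k(\beta))\bigr) \pmod{F_{N-1}},
\]
where $N=|\nu(h_\Omega^k(\al))|_1+|\nu(h_\Omega^k(\beta))|_1$. Because the order $\le$ was constructed by prepending the $|\cdot|_1$-grading to the lex order on $\BZ^{r+1}$, every basis element $S(\bn')$ with $|\bn'|_1\le N-1$ is strictly $(\le)$-smaller than $S(\nu(h_\Omega^k(\al))+\nu(h_\Omega^k(\beta)))$. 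Hence the $(\le)$-lead term of $h_\Omega^k(\al\beta)$ is the nonzero scalar multiple $\zeta^{j}\,S(\nu(h_\Omega^k(\al))+\nu(h_\Omega^k(\beta)))$.

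Next, let $c_\gamma\gamma$ denote the $(\unlhd)$-lead term of $\al\beta$, so $\gamma=\max_\unlhd\supp(\al\beta)$. Applying Lemma~\ref{r.10} to $x=\al\beta$ (which is nonzero because $\KF$ is a domain), the $(\le)$-lead term of $h_\Omega^k(\al\beta)$ equals $c_\gamma\,h_\Omega^k(\gamma)$ for all large $k$. Matching the two expressions for this lead term---each a nonzero scalar times a basis element of $\cS$---forces $h_\Omega^k(\gamma) = S(\nu(h_\Omega^k(\al))+\nu(h_\Omega^k(\beta)))$, so
\[
\nu(h_\Omega^k(\gamma)) \;=\; \nu(h_\Omega^k(\al))+\nu(h_\Omega^k(\beta)) \qquad \text{for all large } k.
\]
Substituting the stable expansion from Theorem~\ref{thm.stab} on all three terms yields the identity of affine functions $k\mu(\gamma)+\eta(\gamma) = k(\mu(\al)+\mu(\beta)) + (\eta(\al)+\eta(\beta))$, and equating linear and constant parts delivers $\mu(\gamma)=\mu(\al)+\mu(\beta)$ and $\eta(\gamma)=\eta(\al)+\eta(\beta)$, which is the claim. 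The only subtle step is checking that the ``$\bmod\,F_{N-1}$'' remainder really is $(\le)$-smaller than the displayed leading term, but this is immediate from the way $\le$ was engineered from the $|\cdot|_1$-grading, so I do not anticipate a serious obstacle; the rest is bookkeeping with the automorphism property of $h_\Omega^k$ and the stable DT formula.
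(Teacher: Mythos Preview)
Your proof is correct and follows essentially the same approach as the paper: apply $h_\Omega^k$ to make both factors triangular, invoke Proposition~\ref{prsi} (equivalently, \eqref{eq.sum2}) to identify the $(\le)$-lead term of the product, use Lemma~\ref{r.10} to match this with $h_\Omega^k(\gamma)$, and then read off $\mu$ and $\eta$ from the stable affine expansion. The only cosmetic difference is that you spell out why the $F_{N-1}$ remainder is $(\le)$-small, which the paper absorbs into the notation $G_<$.
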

\begin{proof}
By Theorem \ref{thm.stab} there exists $K\in\BN$ such that for all $k>K$ the diagrams
 $h^k_\Omega(\al)$ and $h^k_\Omega(\beta)$ are triangular by. By \eqref{eq.sum2},
\begin{align*}
h^k_\Omega(\al \beta) & = h^k_\Omega(\al ) h^k_\Omega(\beta)  \\
&= q^{j(k)} (S( \nu(h^k_\Omega(\al )) + \nu(h^k_\Omega(\beta )))  + G_<(\nu(h^k_\Omega(\al )) + \nu(h^k_\Omega(\beta ))).
\end{align*}
Hence by Lemma \ref{r.10} we have $h^k_\Omega(\gamma) = S( \nu(h^k_\Omega(\al )) + \nu(h^k_\Omega(\beta )))$, or
$$ \nu(h^k_\Omega(\gamma)) = \nu(h^k_\Omega(\al )) + \nu(h^k_\Omega(\beta )).$$
Using \eqref{stabdt} we have
$$ k\mu(\gamma) + \eta(\gamma) = k\mu(\al) + \eta(\al) + k\mu(\beta) + \eta(\beta).$$
It follows that $\mu(\gamma)= \mu(\al) + \mu(\beta)$ and $\eta(\gamma)= \eta(\al) + \eta(\beta)$.
\end{proof}

Define the map $\deg: \KFs \to \bA$ by 
\be \deg(x) = \eta(\al) \in \bA,  \quad \text{where} \ 
 \al= {\max}_\unlhd(\supp(x)). 
 \label{eq.defdeg2} 
\ee
We show that this is in fact a degree mapping in the sense of subsection \ref{genres}.
\begin{lemma}  \label{r.monoid1}
(a) $\deg: \KFs \to \bA$ is a monoid homomorphism.

(b) Suppose $x_1, \dots , x_d \in \KFs$ such that $\deg(x_1), \dots, \deg(x_d)$ are distinct,  then $\sum_i x_i \neq 0$.
\end{lemma}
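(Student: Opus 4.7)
Both parts come down to tracking lead coefficients, using Lemma \ref{r.sum3} for multiplicativity of the lead and the fact that $\unlhd$ is a group order on $\BZ^r \times \BZ^r$ to rule out cancellation from cross terms.

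For (a), the plan is to pick out the lead terms: set $\al = \max_\unlhd \supp(x)$ and $\beta = \max_\unlhd \supp(y)$ with non-zero coefficients $c_\al, c_\beta \in \BC$, and write $x = c_\al \al + x_0$, $y = c_\beta \beta + y_0$, where every element of $\supp(x_0)$ is strictly $\lhd \al$ and similarly for $y_0$. Expanding gives
\[ xy = c_\al c_\beta (\al\beta) + c_\al\, \al y_0 + c_\beta\, x_0 \beta + x_0 y_0. \]
Let $\gamma = \max_\unlhd \supp(\al\beta)$; Lemma \ref{r.sum3} furnishes $\tnu(\gamma) = \tnu(\al) + \tnu(\beta)$, and by its very definition $\gamma$ has a non-zero coefficient in $\al\beta$. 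For any $\al' \in \supp(x)$ and $\beta' \in \supp(y)$ with $(\al',\beta') \neq (\al,\beta)$, Lemma \ref{r.sum3} applied to $\al'\beta'$ shows that $\max_\unlhd \supp(\al'\beta')$ has $\tnu$-image $\tnu(\al') + \tnu(\beta')$, which is strictly $\lhd \tnu(\gamma)$ because $\unlhd$ is a group order on $\BZ^r \times \BZ^r$ and at least one of $\al' \lhd \al$, $\beta' \lhd \beta$ is strict. Consequently every simple diagram appearing in the three cross terms is strictly $\lhd \gamma$, so the coefficient of $\gamma$ in $xy$ equals $c_\al c_\beta$ times its (non-zero) coefficient in $\al\beta$. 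Therefore $\gamma = \max_\unlhd \supp(xy)$ and
\[ \deg(xy) = \eta(\gamma) = \eta(\al) + \eta(\beta) = \deg(x) + \deg(y). \]

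For (b), set $\al_i = \max_\unlhd \supp(x_i)$ and let $c_i \in \BC^*$ be the coefficient of $\al_i$ in $x_i$. Since $\deg(x_i) = \eta(\al_i)$ are pairwise distinct, the $\al_i$ themselves are pairwise distinct. Take $\gamma = \max_\unlhd\{\al_1, \dots, \al_d\}$ and relabel so that $\gamma = \al_1$. For each $j \geq 2$ we have $\al_j \lhd \gamma$, so every element of $\supp(x_j)$ is $\unlhd \al_j \lhd \gamma$; in particular $\gamma \notin \supp(x_j)$ for $j \geq 2$. Hence the coefficient of $\gamma$ in $\sum_i x_i$ equals $c_1 \neq 0$, which forces $\sum_i x_i \neq 0$.

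The one nontrivial point, and the place where an inattentive argument could fail, is the exclusion in part (a) of contributions at level $\gamma$ coming from the three cross terms. This depends squarely on the fact that $\unlhd$ makes $\BZ^r \times \BZ^r$ an \emph{ordered group}, not just a totally ordered set, so that the sum of two sub-lead $\tnu$-images remains strictly sub-lead; once that feature of the order constructed just before the lemma is invoked, everything else is book-keeping.
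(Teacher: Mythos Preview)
Your proof is correct and follows the same approach as the paper. The paper's own proof is very terse---one sentence for each part, citing Lemma \ref{r.sum3} and the ordered-group property of $\unlhd$ for (a), and the distinctness of the lead diagrams $\al_i$ for (b)---and you have simply unpacked those sentences with the explicit cross-term analysis and the coefficient-tracking that they implicitly require.
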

\begin{proof} (a) follows from Lemma \ref{r.sum3} and the fact that $\bA\times \bA$, equipped with $\unlhd$, is an ordered monoid.

(b) Let $c_i \al_i$ be the $(\unlhd)$-lead term of $x_i$. Since $\deg(x_i)=\eta(\al_i)$, the $\al_i$ are distinct. It follows that $\sum_i x_i\neq 0$.
\end{proof}

\def\Sz{\cS_\zeta}
\begin{lemma} (a) Suppose $\al\in \Sz$, then $\deg(\al) \in \bAz$.

(b) Suppose $z\in \ZFs$, then $\deg(z)\in \bAz$. \label{r.l2}
\end{lemma}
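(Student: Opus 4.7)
The plan is to prove (a) first, then deduce (b) by expanding in the Chebyshev basis, splitting into open and closed surface cases.

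In the open surface case, $\deg(\al) = \nu(\al)$ by \eqref{eq.defdeg1}, and $\al \in \Sz$ directly gives $\nu(\al) \in \Az \subset \bAz$. For closed $F$, $\deg(\al) = \eta(\al)$ per \eqref{eq.defdeg2}, and the convention $\Sd = \{\emptyset\}$ forces any $\al \in \Sz$ to have the form $\beta^m$, with $\beta \in \cS$ (when $\ord(\zeta) \not\equiv 0 \pmod 4$) or $\beta \in \Se$ (when $4 \mid \ord(\zeta)$). Since $\nu(h_\Omega^k(\beta^m)) = m\,\nu(h_\Omega^k(\beta))$, combining with Theorem \ref{thm.stab} yields $\mu(\beta^m) = m\mu(\beta)$ and $\eta(\beta^m) = m\eta(\beta)$. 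In the first subcase, $\bAz = m\bA$, and Proposition \ref{r.aden}(b) immediately gives $\deg(\al) = m\eta(\beta) \in m\bA = \bAz$.

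The main obstacle is the subcase $4 \mid \ord(\zeta)$, where $\bAz = m\bAe$ and we must show $\eta(\beta) \in \bAe$ for $\beta \in \Se$. My plan is to apply the surjection $\bar h : \bA \to H_1(\bF, \BZ_2)$ of Proposition \ref{quotmon}(b), whose kernel is $\bAe$. Choosing $k$ to be a large \emph{even} integer so that \eqref{stabdt} applies, I compute
\[\bar h(\eta(\beta)) = [h_\Omega^k(\beta)] - k\sum_j I(\beta, o_j)[o_j] \in H_1(\bF, \BZ_2).\]
The second term vanishes because $k$ is even. For the first term, the standard Dehn twist formula $(h_c)_*[a] = [a] + I(a,c)[c]$ together with the pairwise disjointness of the $o_j$'s shows that $(h_{o_j})^2$ acts trivially on $H_1(\bF,\BZ_2)$ (the correction is $2I(a,o_j)[o_j] \equiv 0 \pmod 2$), hence so does $(h_\Omega^k)_*$ for even $k$. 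Thus $[h_\Omega^k(\beta)] = [\beta] = 0$ since $\beta$ is even, and we conclude $\eta(\beta) \in \ker \bar h = \bAe$.

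For (b), expand $z = \sum_{\al \in \Sz} c_\al T(\al)$ via Theorem \ref{center}(b). The key claim is that the $\unlhd$-lead term of $T(\al)$ is $\al$ itself. Writing $\al = \prod_j C_j^{l_j}$ and expanding $T(\al) = \sum_{(i_j) \leq (l_j)} c_{(i_j)} \prod_j C_j^{i_j}$ with leading coefficient $1$, additivity of $\tnu$ under disjoint multiplication reduces this to verifying $\tnu(C) \unrhd 0$ strictly for every non-trivial loop $C$. I would establish this by case analysis: either $I(C, \Omega) > 0$ and $|\mu(C)|_1 > 0$; or $C$ is disjoint from $\Omega$, in which case $C$ lies in $N(\cD)^\circ$ or in $F \setminus N(\cD)$, and a cutting argument (both pieces decompose into disks once cut along the arcs $P_i \cap N(\cD)$, respectively $P_i \setminus N(\cD)$) forces $C$ to cross some $P_i$, giving $|\eta(C)|_1 = |\nu(C)|_1 > 0$. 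Consequently the $\unlhd$-lead term of $z$ is $c_{\al_0}\al_0$ where $\al_0 = \max_\unlhd\{\al \in \Sz : c_\al \neq 0\} \in \Sz$, so $\deg(z) = \deg(\al_0) \in \bAz$ by part (a). The open surface case of (b) is the same argument with $\nu$ in place of $\tnu$ and the lex order on $(|\nu|, \nu_1, \ldots, \nu_r)$.
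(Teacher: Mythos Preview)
Your proof is correct but takes a genuinely different route for part (a). The paper's argument is uniform in $\ord(\zeta)$: from $\eta(\al)=\nu(h^k_\Omega(\al))-k\mu(\al)$ with $k$ a large multiple of $2m$, one has $\nu(h^k_\Omega(\al))\in\Az$ because $\Sz$ is invariant under Dehn twists, and $k\mu(\al)\in 2m\cA\subset\Az$ (the inclusion holds in both cases since $\al^2$ is always even). Your approach instead factors $\al=\beta^m$, proves the pleasant identity $\eta(\beta^m)=m\,\eta(\beta)$, and in the delicate case $4\mid\ord(\zeta)$ carries out a $\BZ_2$-homology computation to show $\eta(\beta)\in\bAe$ for even $\beta$. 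The paper's argument is shorter and avoids the case split; yours extracts finer structural information (for instance, that $\eta$ respects both the $m$-th power map and the even-diagram condition), which could be useful elsewhere. One small point: in your first case you should justify $\|\mu(C)\|_1>0$ when $I(C,\Omega)>0$; this follows because otherwise $\mu(C)=0$ (its last $3g-3$ coordinates vanish by Theorem~\ref{thm.stab} and the first $3g-3$ are nonnegative), forcing $\nu(h^k_\Omega(C))$ to be constant in $k$, contradicting injectivity of $\nu$ and $h_\Omega(C)\neq C$. For part (b) your argument coincides with the paper's, except that you supply a proof of $\max_{\unlhd}\supp(T(\al))=\al$ where the paper simply asserts it ``from the definition''; your justification via $\tnu(C)\unrhd 0$ for every non-trivial loop $C$ is sound.
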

\begin{proof} (a)   For sufficiently large $k$, from \eqref{stabdt} we have
\be
\eta(\al)=  \nu(h^k_\Omega(\al)) - k \mu(\al).
\label{eq.s6}
\ee
Since $\Sz$ is invariant under Dehn twists, $h^k_\Omega(\al)\in \Sz$, and 
 $\nu(h^k_\Omega(\al))\in \Az$. One the other hand, if $k\in 2m \BZ$, then $k \mu(\al)\in \Az$ since $2m \cA \subset \Az$. Hence from \eqref{eq.s6} we see that $\eta(\al)$, being the difference of two elements of $\Az$, is in $\bAz$.

(b) 
Since  $\{ T(\al) \mid \al \in \cS_\zeta\}$ is a $\BC$-basis of $\ZFs$ (see Theorem \ref{center}), we have
$$ z = \sum_{\al \in U \subset \Sz} c_\al T(\al), \quad c_\al \in \BC^*.$$
From the definition, $\max_\unlhd \supp(T(\al))= \al$. Hence $ \deg(z) = \eta(\al)$, where $\al = \max_\unlhd U$. Since $\al\in \Sz$, the result follows from (a).
\end{proof}
\begin{lemma} The monoid homomorphism $\deg: \KFs \to \bA/\bAz= \Rz$ is surjective. \label{r.l3}
\end{lemma}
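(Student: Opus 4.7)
The plan is to verify the hypotheses of Lemma \ref{r.surj} with $X = M := \eta(\cS) \subset \bA$ and $Y = \bAz$. Since $\deg(\al) = \eta(\al)$ for $\al \in \cS$, the image of $\dz$ on $\KFs$ is precisely $M \bmod \bAz$; Lemma \ref{r.sum3} (together with $\eta(\emptyset) = \vec 0$) shows $M$ is a submonoid of $\bA$, and Proposition \ref{r.dim1a} guarantees that $\bA/\bAz$ is finite. The claim therefore reduces to showing that the subgroup $\overline{M} := \langle M \rangle$ equals $\bA$. I will establish this by populating both factors of the splitting $\bA = p_1(\bA) \oplus (\{\vec 0\} \oplus \BZ^{3g-3})$ given by Lemma \ref{r.index2}.

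For the twist-coordinate factor, Proposition \ref{r.aden}(e) supplies $\eta(P_j) = (\vec 0, -\bode_j)$ for each pants curve $P_j$, so $\{\vec 0\} \oplus \BZ^{3g-3} \subset \overline{M}$. For the pants-coordinate factor, I will use Proposition \ref{r.aden}(c): whenever $I(\al,\Omega) = 0$ one has $\eta(\al) = \nu(\al)$. Any such $\al$ can be isotoped into $F \setminus \Omega = \mathring N(\cD) \sqcup (F \setminus N(\cD))$; annular-part components are parallel to pants curves and contribute $\vec 0$ to $p_1$, while $\mathring N(\cD)$-part components contribute their edge-coordinate vector with respect to the ideal triangulation $\{e_j := P_j \cap \mathring N(\cD)\}$ of the open surface $\mathring N(\cD)$. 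Components that become trivial in $F$ contribute zero to $p_1$ (null-homotopic loops have zero geometric intersection with every simple closed curve), so they may be deleted, ensuring that the surviving subdiagram is a genuine element of $\cS(F)$. Hence every $\bm$ in the edge-coordinate monoid $\cA'$ of $\mathring N(\cD)$ is realized as $p_1(\eta(\al))$ for some $\al \in \cS$ with $I(\al,\Omega) = 0$, and by the proof of Lemma \ref{r.index2} one has $\langle \cA' \rangle = p_1(\bA)$. Thus $p_1(\overline{M}) = p_1(\bA)$.

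Combining the two containments with the direct-sum decomposition of Lemma \ref{r.index2} forces $\overline{M} = \bA$, and Lemma \ref{r.surj} then delivers the desired surjectivity. The most delicate point is the pants-coordinate step, where one must ensure that the witness diagrams lie in $\cS(F)$ and not merely in $\cS(\mathring N(\cD))$; this is exactly what the observation about trivial-in-$F$ components resolves, by permitting their harmless removal without affecting the $p_1$ projection.
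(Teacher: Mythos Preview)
Your proof is correct and follows essentially the same route as the paper's: both arguments establish $\overline{\eta(\cS)} = \bA$ by covering the twist summand via $\eta(P_j) = (\vec 0, -\bode_j)$ (Proposition \ref{r.aden}(e)) and the pants summand via simple diagrams lying in $\mathring N(\cD)$ (invoking Proposition \ref{r.aden}(c)), and then pass to $\Rz$ via Lemma \ref{r.surj}. Your caution about components of a diagram in $\mathring N(\cD)$ becoming trivial in $F$ is unnecessary---since the components of $\Omega$ are essential in $F$, the inclusion $\mathring N(\cD)\hookrightarrow F$ is $\pi_1$-injective, so no non-trivial loop there can become trivial---but the remark is harmless.
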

\begin{proof}
Let $\overline{ \eta(\cS)}$ be the $\BZ$-span of $\eta(\cS)$. 
One has to show that $\overline{ \eta(\cS)} \supset \bA$.
From the description of $\cA$ in Section \ref{sec.Coor2} we see that
$\bA = (\bA)_1 \oplus \BZ^{3g-3}$, and $(\bA)_1$ is the set of all 
$\bn=(n_1,\dots, n_{3g-3})\in \BZ^{3g-3}$ such that whenever $P_i, P_j, P_l$ bound a pair of pants,
 $n_i+ n_j + n_l$ is even. 

The set  $\cA_1'$ of all $\bn=(n_1,\dots, n_{3g-3})\in \BN^{3g-3}$, such that whenever $P_i, P_j, P_l$ bound a pair of pants, $n_i+ n_j + n+l$ is even and $n_i \le n_j + n_l$, spans $(\bA)_1$ over $\BZ$. If $\bn\in \cA_1'$ then there is a simple diagram $\al$ lying entirely in $N(\cD)$ such that $\nu(\al) =(\bn, \vec 0)$.
By Proposition \ref{r.aden}(c), one has $\eta(\al) = \nu(\al) = (\bn, \vec0)$. It follows that
$\overline{\eta(\cS)} \supset (\bA)_1 \oplus  \{ \vec0 \}$.

Since $\eta(P_i) = - (\vec 0, \bode_i)$ by Proposition \ref{r.aden}(e), we have $\overline{\eta(\cS)} \supset \{ \vec0 \} \oplus\BZ^{3g-3}$. Thus, $\overline{\eta(\cS)} \supset (\bA)_1 \oplus \BZ^{3g-3} = \bA$.
 \end{proof}
 Theorem \ref{r.gen1} follows from Lemmas \ref{r.monoid1}, \ref{r.l2}, and \ref{r.l3}. \qed

\subsection{More on $\dz$} 

 The degree map yields a characterization of central skeins and allows the exploration of the independence of diagrams.

\begin{prop} \label{primitive} Suppose $F= F_{g,p}$ has negative Euler characteristic and a coordinate datum. Let $\zeta$ be a root of 1, with $m =\ord(\zeta^4)$. 

(a) If $\al\in \cS$ then $\deg_\zeta(\al)=0$ if and only if $\al\in \Sz$.

(b) Let $C_1, \dots, C_k$ be a sequence of disjoint non-trivial non-peripheral loops such that no two of them are isotopic. For $\bn=(n_1,\dots,n_k) \in \BN^k$ let $C^\bn= \prod_{i=1}^k (C_i)^{n_i} \in \cS$. Suppose $\dz(C^\bn)=0$.

(i) If $\ord (\zeta) \neq 0 \pmod 4$ then 
 $\bn\in m \BZ^k$. 

(ii) If $\ord (\zeta) = 0 \pmod 4$ then  $\bn\in m \BZ^k$ and $C^{\bn/m}\in \cS^\ev$.

\no{
\begin{itemize}
\item If $\ord(\zeta)\neq 0 \mod{4}$ then  $\dz(T(C^\bm))=0$  if and only if  $\bm \in m\BN^k$.
\item If $\ord(\zeta)=0 \mod{4}$ then $\dz(T(C^\bm))=0$  if and only if $\bm \in m\BN^k$, and  $C^{\bm/m}$  represents $0$ in $H_1(\overline{F};\mathbb{Z}_2)$.
  \end{itemize}
  }

\end{prop}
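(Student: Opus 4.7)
The strategy is to prove (a) first and then deduce (b) as a short combinatorial corollary. The forward direction of (a), $\al\in\Sz\Rightarrow\dz(\al)=0$, is already contained in Lemma~\ref{r.l2}(a). The content lies in the converse, which I plan to handle first for open surfaces and then reduce the closed case to the open one via stable Dehn--Thurston coordinates.

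For an open surface, definition~\eqref{eq.defdeg1} gives $\deg(\al)=\nu(\al)$, so I would assume $\nu(\al)\in\bAz$. The key structural input is Proposition~\ref{quotmon}(a): since $\bAd$ is a direct summand of $\bA$, we may write $\bA=\bAd\oplus\bA^\bullet$, where $\bA^\bullet$ is the subgroup generated by $\nu(\So)$. A short calculation then gives
\[
\bAz=\bAd\oplus m\bA^\bullet \ \text{when }\ord(\zeta)\not\equiv 0\pmod 4, \qquad \bAz=\bAd\oplus m(\bA^\bullet\cap\bAe) \ \text{when }4\mid\ord(\zeta).
\]
Decomposing $\al=\al^\partial\al^\bullet$ canonically with $\al^\partial\in\Sd$ and $\al^\bullet\in\So$, Proposition~\ref{r.prod} gives $\nu(\al)=\nu(\al^\partial)+\nu(\al^\bullet)\in\Ad\oplus\Ao$, and the hypothesis forces $\nu(\al^\bullet)=m\mathbf{z}$ for a unique $\mathbf{z}\in\bA^\bullet$ (resp.\ $\bA^\bullet\cap\bAe$). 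I would then verify $\mathbf{z}\in\Ao$: non-negativity of entries and the triangle inequality of~\eqref{eq.cA} descend from those of $m\mathbf{z}=\nu(\al^\bullet)$ by dividing by $m>0$, and triangle-sum evenness is automatic because $\mathbf{z}\in\bA$ by Lemma~\ref{r.index1}(a). In the case $4\mid\ord(\zeta)$, the further condition $\mathbf{z}\in\bAe$ upgrades this to $\mathbf{z}\in\Ao\cap\Ae$, so the diagram $\gamma:=S(\mathbf{z})$ lies in $\So\cap\Se$. Then $\al=\al^\partial\gamma^m\in\Sz$ as required.

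For a closed surface with $g\ge 2$ one has $\Sd=\{\emptyset\}$ and $\deg(\al)=\eta(\al)$ by~\eqref{eq.defdeg2}. I would choose $k$ to be a large multiple of $m$ (or of $2m$ when $4\mid\ord(\zeta)$, so that $k\mu(\al)\in m\bAe$ using $2\bA\subset\bAe$), so that the identity $\nu(h^k_\Omega(\al))=k\mu(\al)+\eta(\al)$ of Theorem~\ref{thm.stab} places $\nu(h^k_\Omega(\al))$ in $\bAz$; the open-surface argument then produces $h^k_\Omega(\al)\in\Sz$, and $h_\Omega$-invariance of $\Sz$ (a consequence of $h_\Omega$ being a diffeomorphism preserving $\BZ_2$-homology and commuting with taking powers) transfers the conclusion to $\al$. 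Finally, (b) follows by applying (a) to $\al=C^\bn$: the presentation $C^\bn=\beta\gamma^m$ with $\beta\in\Sd$ and $\gamma\in\cS$ (resp.\ $\Se$) forces $\beta=\emptyset$ since $C^\bn$ has no peripheral components; writing $\gamma=\prod_i C_i^{k_i}$, whose components must be isotopic to the $C_i$'s, yields $\bn=m\mathbf{k}\in m\BN^k$, and in case (ii) the condition $\gamma\in\Se$ reads exactly as $C^{\bn/m}\in\Se$. The step I expect to be most delicate is the closed-surface reduction: it demands a parity-sensitive choice of $k$ to land in $m\bAe$ when $m$ may be even, and careful use of twist-invariance of $\Sz$ to pass from $h^k_\Omega(\al)$ back to $\al$.
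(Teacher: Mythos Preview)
Your proposal is correct and follows essentially the same route as the paper: handle the open case by showing $\nu(\al)\in\bAz\cap\cA$ forces $\al\in\Sz$, reduce the closed case to this via $\nu(h^k_\Omega(\al))=k\mu(\al)+\eta(\al)$ with $k$ a suitable multiple (the paper simply takes $2m$ in both cases, noting $2m\cA\subset\Az$), invoke invariance of $\Sz$ under diffeomorphisms to return from $h^k_\Omega(\al)$ to $\al$, and deduce (b) from (a) exactly as you describe. The only difference is one of detail: the paper asserts the identity $\bAz\cap\cA=\Az$ without comment, whereas you unpack it via the direct-sum decomposition $\bA=\bAd\oplus\bA^\bullet$ and an explicit check that $\mathbf z=\nu(\al^\bullet)/m$ lies in $\cA$; your phrase ``the open-surface argument then produces $h^k_\Omega(\al)\in\Sz$'' in the closed case should be read as the analogous verification with the Dehn--Thurston conditions~\eqref{eq.cA2} in place of~\eqref{eq.cA}, but the check goes through in the same way.
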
 
\begin{proof} (a) By definition, $\dz(\al)=0$ if and only if $\deg(\al) \in \bAz$.

Case 1: $p>0$. In this case $\deg(\al) = \nu(\al)$. If $\al\in\Sz$ then $\deg(\al)=\nu(\al)\in \nu(\Sz)= \Az\subset \bAz$. Conversely, suppose $\nu(\al)\in \bAz$ then $\nu(\al) \in \bAz \cap \cA= \Az$. Hence $\al\in \Sz$.

Case 2: If $p=0$ then $\deg(\al)= \eta(\al)$.  If $\al\in \Sz$ then  $\deg(\al)= \eta(\al)\in \bAz$ by Lemma \ref{r.l2}.

Suppose $\eta(\al) \in \bAz$. For large $l$ we have
$$ \nu(h^l_\Omega(\al))= l \mu(\al) + \eta(\al).$$
When $l$ is a multiple of $2m$, one has $l \mu(\al)\in 2m \cA \subset \Az$, and the right hand side of the above is in $\bAz$. It follows that $\nu(h^l_\Omega(\al)) \in \bAz \cap \cA= \Az$. Hence $h^l_\Omega(\al)\in \Sz$. As $\Sz$ is invariant under automorphisms of $F$, we have $\al\in \Sz$.

(b) By part (a), we have $C^\bn \in \Sz$. From the definition of $\Sz$ (see Section \ref{sec.Cheb}) one has  $C^\bn= \beta \gamma^m$ where $\beta\in \Ad$ and $\gamma \in \cA$. Since there are no peripheral elements among the $C_i$ we must have $\beta=\emptyset$ and  $C^\bn= \gamma^m$. This proves $\bn \in m \BN^k$. Moreover $\gamma= C^{\bn/m}$.

If $\ord(\zeta)=0 \mod{4}$, then the definition of $\Sz$ requires $\gamma\in \Se$. Hence in this case $C^{\bn/m}\in \Se$.
\end{proof}

\def\AL{\mathfrak A}
\def\ZL{{\mathfrak Z}}
\def\tZL{{\tilde \ZL}}
 
\section{Dimension of $\KF$ over $\ZF$}\label{dimkoverz}
\subsection{Formulation of result}
Recall that for a finite type surface $F=F_{g,p}$ and a root of unity $\zeta$ with  $m =\ord(\zeta^4)$,
\be  D_\zeta(F) =  \begin{cases}
 m^{6g-6 + 2p} \qquad &\text{if }\  \ord (\zeta) \not \equiv 0 \pmod 4  \\
   2^{2g} \, m^{6g-6 + 2p} \qquad &\text{if } \ \ord (\zeta)  \equiv 0 \pmod 4.
   \label{eq.dimdef}
   \end{cases}
   \ee

\begin{theorem}   \label{thm.dim}
Suppose $F$ is a finite type surface with negative Euler characteristic and $\zeta$ is a root of 1, then
$\dim_{\ZF} \KF = D_\zeta(F)$.
\end{theorem}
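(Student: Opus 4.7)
The lower bound $\dim_{\ZF}\KF \ge D_\zeta(F)$ is already provided by Corollary \ref{r.dimlow}(b), since the degree map $\dz$ of Theorem \ref{r.gen1} surjects $\KFs$ onto the residue group $\Rz$, whose order is computed to be $D_\zeta(F)$ in Propositions \ref{r.dim1} and \ref{r.dim1a}. The entire task therefore reduces to establishing the matching upper bound. My plan is to apply the filtration criterion of Lemma \ref{r.lim2}: if $\{F_k\}$ is a filtration of $\KF$ compatible with the product such that each $F_k(\KF)$ and $F_k(\ZF)$ is finite-dimensional over $\BC$, then
\[
\dim_{\ZF}\KF \le \lim_{k\to\infty}\frac{\dim_\BC F_k(\KF)}{\dim_\BC F_{k-u}(\ZF)}
\]
for some integer $u$, and one wants to choose $\{F_k\}$ so that this ratio equals $D_\zeta(F)$. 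The ratio will be computed via Lemma \ref{r.lim3} and will come out to $[\bA:\bAz] = |\Rz(F)| = D_\zeta(F)$.

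For an open surface $F=F_{g,p}$ with $p\ge 1$, I take $\fA=\{e_1,\dots,e_r\}$ to be the ideal triangulation of the coordinate datum and use the filtration from Subsection \ref{sec.filter}. By Theorems \ref{r.basis} and \ref{center}, under the identification $\cS\cong\cA$,
\[
\dim_\BC F_k(\KF)=|\cA\cap B_k|,\qquad \dim_\BC F_k(\ZF)=|\Az\cap B_k|,
\]
where $B_k=\{\bn\in\BR^r:n_i\ge 0,\sum n_i\le k\}$. The triangle inequalities \eqref{eq.cA} cut out a rational polyhedral cone $Q^+\subset\BR^r$ with $\cA=\bA\cap Q^+$, so $\cA\cap B_k=\bA\cap kQ$ for a fixed polytope $Q$ with $\vol(Q)>0$. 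The key additional step is to verify the analogous asymptotic identification $|\Az\cap B_k|\sim|\bAz\cap kQ|$ up to lower-order terms; the containment $\Az\subset\bAz\cap\cA$ is immediate, and for the reverse I would use the direct-summand splitting from Proposition \ref{quotmon}(a) to show that every lattice point of $\bAz$ sitting sufficiently deep inside $kQ$ admits a decomposition in $\Ad+m\cA$ (respectively $\Ad+m\Ae$ when $4\mid\ord(\zeta)$). Lemma \ref{r.lim3} then yields
\[
\lim_{k\to\infty}\frac{|\bA\cap kQ|}{|\bAz\cap(k-u)Q|}=[\bA:\bAz]=D_\zeta(F),
\]
finishing the open-surface case.

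For a closed surface $F=F_{g,0}$ with $g\ge 2$, the bare pants-curve collection $\fA=\{P_1,\dots,P_{3g-3}\}$ does not produce a finite-dimensional $F_k$ because the twist coordinates are unconstrained. I would enlarge $\fA$ by adjoining loops that detect the twist coordinates — e.g.\ the components of $\Omega=\partial N(\cD)$ together with suitable loops transverse to each $P_i$ inside $N(\cD)$ — so that $F_k$ becomes finite-dimensional. Using the stable Dehn--Thurston coordinates $\tnu(\al)=(\mu(\al),\eta(\al))$ from Theorem \ref{thm.stab} and the triangular product formula of Proposition \ref{prsi}, one then identifies $F_k(\KF)$ and $F_k(\ZF)$ with the lattice points of $\bA$ and $\bAz$ in polytopes that scale linearly with $k$, and invokes Lemma \ref{r.lim3} together with Proposition \ref{r.dim1a} to obtain the same ratio $D_\zeta(F)$. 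The main obstacle I anticipate is the asymptotic ``filling'' step: $\Az$ is described through the somewhat subtle monoid sum $\Ad+m\cA$ (or $\Ad+m\Ae$), not as a full sublattice, so one must show the gap between $\Az$ and $\bAz\cap Q^+$ contributes only lower-order lattice points, which in the closed case is further complicated by having to track twists via stable coordinates without a universally valid product formula — the finer exact product identities of \cite{FKL} alluded to after Proposition \ref{prsi} should be what is needed there.
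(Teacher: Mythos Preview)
Your high-level plan is the paper's: the lower bound is Corollary~\ref{r.dimlow}(b), and the upper bound comes from Lemma~\ref{r.lim2} combined with the lattice-point count of Lemma~\ref{r.lim3}. For the open-surface case your filtration and counting argument agree with the paper's.

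For the closed case, however, the paper's route is different from and simpler than yours, and it sidesteps both obstacles you flag. The only product compatibility needed for Lemma~\ref{r.lim2} is the elementary Proposition~\ref{r.filt}; neither the triangular product formula of Proposition~\ref{prsi} nor the stable DT coordinates of Theorem~\ref{thm.stab} enter the upper-bound argument at all. Instead, the paper enlarges $\fA$ by $6g-6$ loops $P_{3g-2},\dots,P_{9g-9}$ supplied by Lemma~\ref{r.plDT}: these have the property that $(I(\alpha,P_i))_{i=1}^{9g-9}$ determines $\alpha$, and each $I(S(\bn),P_i)$ is a piecewise-rational-linear function of the DT coordinate~$\bn$. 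Setting $h(\bn)=\sum_i I(S(\bn),P_i)$ and $Q=\{x\in Q_\infty: h(x)\le 1\}$, where $Q_\infty$ is the cone cut out by conditions (i)--(ii) of the DT description (positivity only, no parity), one obtains a bounded region which is a finite union of rational polytopes of positive volume, and $F_k(\KF)$ has $\BC$-basis $\{S(\bn):\bn\in\cA\cap kQ\}$.

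Your anticipated ``filling'' obstacle is dissolved by the design of $Q_\infty$. Because $\cA=\bA\cap Q_\infty$ exactly and $Q_\infty$ is a cone, one has $\cA\cap kQ=\bA\cap kQ$ on the nose; since in the closed case $\Az$ equals $m\cA$ or $m\Ae$ (there is no $\Ad$ summand), the same homogeneity gives $\Az\cap kQ=\bAz\cap kQ$. No asymptotic comparison between the monoid $\Az$ and the lattice $\bAz$ is required; the only inequality used is the trivial $\dim_\BC F_k(\ZF)\ge|\Az\cap kQ|$, which already points the right way for Lemma~\ref{r.lim2}. The finer product identities of \cite{FKL} that you invoke are used in Section~\ref{residue} to build the degree map for the lower bound, not here.
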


\begin{remark} Let us discuss the cases excluded by Theorem \ref{thm.dim}, namely the cases when the Euler characteristic of $F_{g,p}$ is non-nagative. There are four such cases: the sphere with zero, one or two punctures and the torus. The skein algebras of the first three are commutative so they have dimension $1$ over their respective centers.  For the torus in the case where $n$ is odd this was done in \cite{AF1} and the dimension is $m^2$. The case when $m$ has residue $2$ on division by $4$ is similar and the dimension is $m^2$. Finally, when $n$ is divisible by $4$ the dimension is $4m^2$.
\end{remark}

\begin{cor} \label{r.dim=}
Let  $F=F_{g,p}$ be a finite type surface with negative Euler characteristic equipped with a coordinate datum.
Suppose $X$ is a $\tZF$-vector subspace of $\tKF$ such that $\dz(X \setminus \{0\})= \Rz$. Then $X= \tKF$.
\end{cor}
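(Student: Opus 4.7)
\medskip

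\noindent\emph{Proof plan for Corollary \ref{r.dim=}.} The plan is to exhibit inside $X$ a family of $D_\zeta(F)$ elements that are linearly independent over $\tZF$, so that they form a basis of $\tKF$ by the dimension count from Theorem \ref{thm.dim}.

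First, using the hypothesis that the extended group homomorphism $\dz\colon \tKF^*\to \Rz$ of \eqref{eq.Rd} sends $X\setminus\{0\}$ onto $\Rz$, I would pick, for each residue class $r\in \Rz$, an element $x_r\in X\setminus\{0\}$ with $\dz(x_r)=r$. By Propositions \ref{r.dim1} and \ref{r.dim1a}, the family $\{x_r\}_{r\in \Rz}$ has cardinality $|\Rz|=D_\zeta(F)$, and its members have pairwise distinct $\dz$-classes.

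The next step is to upgrade Corollary \ref{r.dimlow}(a), which gives $\ZF$-linear independence of such a family inside $\KF$, to $\tZF$-linear independence inside $\tKF$. Given a tentative relation $\sum_r a_r x_r=0$ with $a_r\in \tZF$, not all zero, I would clear denominators: write each $a_r=b_r/c$ with $b_r\in \ZF$, $c\in \ZF\setminus\{0\}$ a common denominator, and similarly write $x_r= y_r z_r^{-1}$ with $y_r\in \KF^*$, $z_r\in \ZF\setminus\{0\}$. Multiplying the relation through by an appropriate element of $\ZF$ turns it into a non-trivial relation $\sum_r b'_r y_r=0$ inside $\KF$ with coefficients in $\ZF$. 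Since $\dz$ vanishes on $\ZF^*$ and is multiplicative, the elements $y_r$ still have pairwise distinct $\dz$-classes, so Corollary \ref{r.dimlow}(a) produces a contradiction. Hence $\{x_r\}_{r\in \Rz}$ is $\tZF$-linearly independent in $\tKF$.

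Finally, by Theorem \ref{thm.dim}, $\dim_{\tZF}\tKF=D_\zeta(F)=|\Rz|$, so the $\tZF$-linearly independent family $\{x_r\}\subset X$ is already a basis of $\tKF$. Therefore $X$, being a $\tZF$-subspace of $\tKF$ containing a basis, equals $\tKF$. The only mildly subtle point is the passage from $\KF$ to $\tKF$ in the linear-independence step; everything else is a direct packaging of the surjectivity of $\dz$ with the dimension count.
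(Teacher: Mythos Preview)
Your proposal is correct and follows the same approach as the paper: choose one element of $X$ in each $\dz$-class, invoke Corollary~\ref{r.dimlow}(a) to get linear independence, and conclude by the dimension count of Theorem~\ref{thm.dim}. The paper's proof is terser---it simply asserts that Corollary~\ref{r.dimlow} gives $\tZF$-linear independence---whereas you explicitly clear denominators to pass from $\ZF$-linear independence in $\KF$ to $\tZF$-linear independence in $\tKF$; this extra care is justified and fills in a step the paper leaves implicit.
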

\begin{proof} Let $\cB\subset X$ be such that $\dz$ is a bijection from $\cB$ to $\Rz$. By Corollary \ref{r.dimlow}, $\cB$ is $\tZF$-linearly independent. Thus
$\dim_\tZF X \ge |\Rz|= D_\zeta(F) = \dim_\tZF \tKF$, and hence $X= \tKF$.
\end{proof}

By Corollary \ref{r.dimlow} we have $\dim_{\ZF} \KF \ge D_\zeta(F)$. To prove Theorem \ref{thm.dim} we need  to prove the converse inequality
\be 
\dim_{\ZF} \KF \le D_\zeta(F). \label{eq.dimup}
\ee
 \def\Sz{\cS_\zeta}
\subsection{Proof of Theorem \ref{thm.dim}, open surface case} Assume $p>0$. 
Fix a coordinate datum (a triangulation) $
\{e_1,\dots, e_r\}$.

Let $F_k(\KF)$ be the $\BC$-vector subspace of $\KF$ spanned by  $\{ \al\in \cS\mid \sum_{i=1}^r \nu_i(\al) \le k\}$. By Proposition \ref{r.filt}, 
$(F_k(\KF))_{k=0}^\infty$ is a filtration of $\KF$ compatible with the product.
Let $Q\subset \BR^r$ be the simplex 
$$ Q= \{ (x_1,\dots, x_r) \in \BR^r \mid x_i \ge 0,\ \sum x_i \le 1\}.$$

From Theorem \ref{center} it follows that 
\begin{align*}
&\{ T(S(\bn)) \mid \bn \in \cA \cap kQ \}  \quad \text{is a  $\BC$-basis of $F_k(\KF)$} \\
&\{ T(S(\bn)) \mid \bn \in \cA_\zeta \cap kQ \}  \quad \text{is a  $\BC$-basis of $F_k(\ZF)$}.
\end{align*}

It follows that
\begin{align}
 \dim_\BC F_k(\KF)& = |\cA \cap kQ|= |\bA \cap kQ|\\
\dim_\BC F_k(\ZF)& = |\Az \cap kQ| = |\bAz \cap kQ|.
\end{align}

Hence by  Lemma \ref{r.lim2}, there is a positive integer $u$ such that
\be 
\dim_{\ZF} \KF \le \lim_{k\to \infty} \frac{|\bA \cap kQ| }{|\bAz \cap (k-u)Q|}= |\bA/\bAz| = D(F,\zeta),
\ee
where the first identity follows from  \eqref{eq.vol3} and the second one follows from Proposition \ref{r.dim1}. This proves  Theorem \ref{thm.dim} for open surfaces. \qed

\subsection{Piecewise-rational-linear functions}
 A function $f: \BR^k \to \BR^l$ is {\em rational-linear} if there is
 is a matrix $A$ with rational entries such that  $f(x) = A x$. A function $f: \BR^k \to \BR^l$ is {\em piecewise-rational-linear} if it is continuous and there are rational-linear functions $f_1,\dots, f_k: \BR^k \to \BR$ such that on each connected component of the complement of all the hyperplanes $\{ x\in \BR^k \mid f_i(x) =0\}$, the function $f$ is equal to the restriction of a rational-linear function.
 
 For $X \subset \BR^k$ a function $h: X \to \BR^l$ is {\em piecewise-rational-linear} if there is a piecewise-rational-linear function from $\BR^k$ to $\BR^l$ restricting to $h$.

It is clear that sums of piecewise-rational-linear functions are piecewise-rational-linear, and that a piecewise-rational-linear function $h$ is positively homogeneous, i.e. $h(t x) = t h(x)$ for all real $ t\ge 0$. 

A {\em rational convex polyhedron} is the convex hull of a finite number of points in $\BQ^n$.
The following properties are easy consequences of the definition.

\begin{prop} \label{r.rpwl}
Suppose $h_1, \dots, h_l : \BR^n \to \BR$ are piecewise-rational-linear, $c_1,\dots, c_l\in \BQ$ and
$$ Q =\{ x\in \BR^n \mid h_i(x) \ge c_i\}.$$

(a) If $Q$ is not bounded, then $Q$ contains  a set of the form
$\{ tx \mid t \in \BR_{\ge 0}\}$
for some non-zero $x\in \BQ^n$. We call such set a {\em rational ray}. 

(b) 
If $Q$ is bounded, then $Q$ is the union of a finite number of rational convex polyhedra. \end{prop}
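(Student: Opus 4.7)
\textbf{Proof plan for Proposition \ref{r.rpwl}.}

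The plan is to reduce both statements to standard facts about rational polyhedra by writing $Q$ as a finite union of such polyhedra via the piecewise structure of the $h_i$. First I would unpack the definition: for each $i=1,\ldots,l$, let $f^{(i)}_1,\ldots,f^{(i)}_{k_i}$ be the rational-linear functions witnessing that $h_i$ is piecewise-rational-linear, and let $\mathcal H$ denote the finite arrangement of rational hyperplanes $\{f^{(i)}_j = 0\}$ in $\mathbb R^n$. The closures of the connected components of $\mathbb R^n \setminus \bigcup \mathcal H$ form a finite collection $R_1,\ldots,R_M$ of closed rational convex polyhedral cones covering $\mathbb R^n$. By continuity of each $h_i$ together with its definition, on each $R_\alpha$ there is a rational-linear function $\ell_{i,\alpha}$ such that $h_i|_{R_\alpha} = \ell_{i,\alpha}|_{R_\alpha}$.

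Second, I would rewrite $Q$ as a finite union:
\[
Q \;=\; \bigcup_{\alpha=1}^M Q_\alpha, \qquad Q_\alpha \;:=\; R_\alpha \cap \bigcap_{i=1}^l \bigl\{ x \in \mathbb R^n : \ell_{i,\alpha}(x) \ge c_i\bigr\}.
\]
Each $Q_\alpha$ is the intersection of finitely many rational (linear or affine) half-spaces, hence a rational polyhedron in the classical sense.

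For part (b), if $Q$ is bounded then every $Q_\alpha$ is bounded, hence by the Minkowski--Weyl theorem (or directly via vertex enumeration) each $Q_\alpha$ is the convex hull of its finitely many vertices, each of which is a rational point because it is the unique solution of a subsystem of rational linear equations drawn from the defining inequalities. So each $Q_\alpha$ is a rational convex polyhedron, and $Q$ is the finite union, as required.

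For part (a), if $Q$ is unbounded, then some $Q_\alpha$ is unbounded. Its recession cone
\[
Q_\alpha^\infty \;=\; R_\alpha \cap \bigcap_{i=1}^l \{ x : \ell_{i,\alpha}(x) \ge 0\}
\]
is a rational polyhedral cone (its defining inequalities are all rational and homogeneous), and by Minkowski--Weyl it is nonzero precisely when $Q_\alpha$ is unbounded. Pick any nonzero rational vector $v \in Q_\alpha^\infty$; then for any fixed $p \in Q_\alpha$ the half-line $\{p + tv : t \ge 0\}$ lies in $Q_\alpha \subset Q$, supplying the desired rational ray (with the standard convention that a rational ray is a half-line in a nonzero rational direction; if the stricter reading requires the ray to emanate from the origin, one additionally notes $h_i(0)=0$ by homogeneity, which forces $c_i\le 0$ in the intended applications, so $0\in Q$ and the ray $\{tv:t\ge 0\}\subset Q$ as well).

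The main obstacle is simply justifying the two standard rational-polyhedron facts in play: (i) bounded rational polyhedra are convex hulls of finitely many rational points, and (ii) unbounded rational polyhedra have nonzero rational recession directions. Both are classical consequences of Minkowski--Weyl; everything else is bookkeeping once the piecewise cone decomposition of $\mathbb R^n$ has been fixed.
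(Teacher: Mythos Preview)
The paper does not actually give a proof of this proposition; it simply asserts that both parts ``are easy consequences of the definition.'' Your argument via the common refinement of the rational hyperplane fans attached to the $h_i$, writing $Q$ as a finite union of rational polyhedra $Q_\alpha$, and then invoking Minkowski--Weyl for (a) the recession cone and (b) the vertex description, is exactly the natural way to make that assertion precise. The bookkeeping is correct.

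You also correctly flag the one genuine subtlety: as written, a ``rational ray'' is $\{tx:t\ge 0\}$ and hence passes through the origin, whereas a general $Q$ with some $c_i>0$ need not contain~$0$ (e.g.\ $Q=[1,\infty)\subset\BR$). Your recession-cone argument yields a rational direction $v$ and a half-line $\{p+tv:t\ge 0\}\subset Q$, which is the correct general statement. Your parenthetical fix is also right: since each $h_i$ is positively homogeneous one has $h_i(0)=0$, so if all $c_i\le 0$ then $0\in Q$ and the ray $\{tv:t\ge 0\}$ itself lies in $Q$. In the paper's single application (Lemma~\ref{r.Q}) all the constants are indeed $\le 0$, so the stricter reading holds there; and for that application even the affine half-line version would suffice, since a half-line with rational base point and rational direction already contains infinitely many even-integer points.
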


\subsection{Infinite sector $Q_\infty$} Suppose $F=F_{g,0}$ with $g\ge 2$. Fix a coordinate datum $(\cP,\cD)$ which gives  the DT coordinate map
$$ \nu : \cS \embed \BZ^{6g-6}.$$
Here $\cP=(P_1,\dots, P_{3g-3})$ is a pants decomposition.
The set $\cA= \nu(\cS)$
consists of all points
$(x_1, \dots, x_{6g-6}) \in \BZ^{6g-6}$ satisfying 

\begin{itemize}
\item [(i)] $x_i \ge 0$ for $i=1,\dots, 3g-3$,
\item [(ii)] if $x_i = 0$ for some $i=1,\dots, 3g-3$, then $x_{i+3g-3}\ge0$,

\item[(iii)]
if $P_i, P_j, P_l$ bound a pair of pants, then $x_i + x_j + x_l$  is even.
\end{itemize}

Let $Q_\infty$ be the set of all $(x_1,\dots, x_{6g-6})\in \BR^{6g-6}$ satisfying conditions (i) and   (ii) above. Note that we allow points in $Q_\infty$ to have  real coordinates. 

 For any submonoid $X$ of $\BZ^{6g-6}$, let $\overline X$ be the subgroup generated by $X$. The set $Q_\infty$ was introduced so that 
$
\cA = \bA \cap Q_\infty.   $

\begin{lemma} (a) For any subset $Q'\subset Q_\infty$ one has
\be 
\cA \cap Q' = \bA \cap Q' 
\ee
(b) If $x \in Q_\infty\cap \BZ^{6g-6}$ then $ 2x \in \cA$.
\end{lemma}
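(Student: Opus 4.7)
\smallskip
\noindent\textbf{Proof plan.} The whole lemma will follow quickly once we establish the identity $\cA = \bA \cap Q_\infty$ mentioned just before the statement. So my first step is to prove this identity carefully. The inclusion $\cA \subset \bA \cap Q_\infty$ is immediate: every element of $\cA$ lies in $\bA$ by definition of $\bA$, and satisfies conditions (i) and (ii) (which is what $Q_\infty$ encodes). For the reverse inclusion, I would use the explicit description of $\cA$ as the set of $x \in \BZ^{6g-6}$ satisfying (i), (ii), (iii). If $x \in Q_\infty$ then $x$ already satisfies (i), (ii). The crucial observation is that (iii) is preserved under integer linear combinations: since each generator $\nu(S)$ of $\bA$ satisfies $\nu_i(S)+\nu_j(S)+\nu_l(S)\equiv 0 \pmod 2$ whenever $P_i,P_j,P_l$ bound a pair of pants, so does every element of $\bA$. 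Hence $x \in \bA \cap Q_\infty$ implies $x \in \cA$.

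For part (a), the inclusion $\cA \cap Q' \subset \bA \cap Q'$ is trivial from $\cA \subset \bA$. Conversely, if $x \in \bA \cap Q'$, then since $Q' \subset Q_\infty$ we get $x \in \bA \cap Q_\infty = \cA$, whence $x \in \cA \cap Q'$.

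For part (b), I would just check the three conditions on $2x$. If $x \in Q_\infty \cap \BZ^{6g-6}$, then $2x \in \BZ^{6g-6}$ satisfies (i) because $x$ does; it satisfies (ii) because $(2x)_i = 0$ iff $x_i = 0$, in which case $(2x)_{i+3g-3} = 2 x_{i+3g-3} \ge 0$; and it satisfies (iii) because $(2x)_i + (2x)_j + (2x)_l = 2(x_i+x_j+x_l)$ is automatically even. Therefore $2x \in \cA$.

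\smallskip
\noindent\textbf{Main obstacle.} There really is no obstacle here: the lemma is a direct bookkeeping consequence of the characterization of $\cA$ by conditions (i)--(iii) and of the fact that condition (iii), being a parity condition, is preserved under $\BZ$-linear combinations. The only subtle point worth flagging is the stability of (iii) under taking the group generated by $\cA$, which is what gives the nontrivial half of $\cA = \bA \cap Q_\infty$ and hence of part (a).
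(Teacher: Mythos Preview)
Your proof is correct and follows the same approach as the paper. The paper simply quotes the identity $\cA=\bA\cap Q_\infty$ stated just before the lemma and writes part (a) as the one-liner $\bA\cap Q'=\bA\cap(Q_\infty\cap Q')=(\bA\cap Q_\infty)\cap Q'=\cA\cap Q'$, while for (b) it just says ``follows from (iii)''; your argument is the same, only spelled out in more detail (in particular you supply the verification of $\cA=\bA\cap Q_\infty$ that the paper leaves implicit).
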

\proof (a) 
 $\ \bA \cap Q' = \bA \cap (Q_\infty \cap Q') = (\bA \cap Q_\infty) \cap Q'= \cA \cap Q'$.
 
 (b) follows from (iii) above.\qed

\subsection{DT coordinates and geometric intersection numbers} 
\begin{lemma}  \label{r.plDT} 
(a) There are $6g-6$ loops $P_{3g-2},\dots, P_{9g-9}$ such that any $\al\in \cS$ is totally determined by the collection $(I(\al, P_i))_{i=1}^{9g-9}$. 

(b) Moreover, for each $i$ the function $\cA\to \BR$,
defined by $ \bn \to I(S(\bn), P_i)$, is piecewise-rational-linear.
\end{lemma}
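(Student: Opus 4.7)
The plan is to leverage the standard Dehn--Thurston coordinate machinery. Part (a) will follow from piecewise-linear reconstruction formulas that recover $\nu(\al)$ from a sufficiently large collection of intersection numbers, and part (b) will follow from the classical piecewise-linearity of geometric intersection numbers in DT coordinates.

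For the construction of the $6g-6$ additional loops in (a), I would use the ones provided by \cite[Proposition 4.4]{LS}, which is already invoked in the proof of Proposition \ref{r.fact1}(a). For each pants curve $P_i$ with $i\le 3g-3$, one picks two ``dual'' simple loops $P_{3g-3+i}$ and $P_{6g-6+i}$ that intersect $P_i$ transversally and differ in their twist coordinate at $P_i$, so that the sign and magnitude of $t_i(\al)$ can be detected together with the pants intersection coordinates $n_j(\al) = I(\al,P_j)$ for $j\le 3g-3$. Since $\nu$ is injective (Section \ref{sec.Coor2}) and \cite[Prop. 4.4]{LS} expresses the twist coordinates as explicit piecewise-linear functions of the $9g-9$ intersection numbers, part (a) follows.

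For (b), when $i\le 3g-3$ the function $\bn\mapsto I(S(\bn),P_i)=n_i$ is a coordinate projection, hence rational-linear. For $3g-3<i\le 9g-9$ the loop $P_i$ has its own fixed DT coordinate $\bn_i\in\BZ^{6g-6}$, and I would appeal to the classical Dehn--Thurston intersection formula (see \cite[Expos\'e~6]{FLP}, or \cite{Penner}), which expresses the geometric intersection number $I(S(\bn),S(\bn'))$ as a piecewise-linear function with integer coefficients in $(\bn,\bn')$. Specializing $\bn'=\bn_i$ yields the claim.

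The main technical point is verifying the piecewise-rational-linearity claim in (b) rigorously. This is handled by decomposing the count of essential intersection points into contributions from each pair of pants---where the number of intersections between any two families of model arcs is governed by explicit $\max$/$\min$ expressions in the pants coordinates---and each annular neighborhood of a pants curve, where the twists contribute via the standard piecewise-linear Dehn twist formula $t\mapsto t+n$. Since $\max$ and $\min$ of $\BQ$-linear functions are piecewise-rational-linear, and sums and compositions preserve this class, the total count is piecewise-rational-linear in $\bn$.
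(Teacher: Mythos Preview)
Your argument for part (a) is essentially the same as the paper's: both appeal to the explicit formulas of \cite{LS} (the paper simply says ``this follows from the computations in Section 4 of \cite{LS}'').

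For part (b), your approach differs from the paper's. The paper argues as follows: for \emph{any} simple closed curve $\alpha$, choose a coordinate datum $(\cP',\cD')$ in which $\alpha$ is one of the pants curves; then $I(S(\bn),\alpha)$ is simply a coordinate of $S(\bn)$ in the $(\cP',\cD')$ system, and by \cite{Penner} the change of DT coordinates between two data is piecewise-rational-linear. This is a clean, uniform trick that avoids any explicit intersection computation.

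Your route is more direct but contains an overclaim. You assert that $I(S(\bn),S(\bn'))$ is piecewise-linear in the pair $(\bn,\bn')$, citing \cite{FLP} and \cite{Penner}; neither reference establishes such a two-variable formula, and indeed no simple closed formula of this kind is standard---putting two arbitrary diagrams simultaneously in standard position and summing local counts does not obviously yield the \emph{geometric} intersection number, since bigons may appear. What \emph{is} available in \cite{FLP} and \cite{LS} are explicit piecewise-linear formulas for $I(S(\bn),P_i)$ for the \emph{specific} auxiliary curves $P_i$ (the $K_i',K_i''$ of \cite{FLP}), so your argument can be repaired by citing those specific formulas rather than an alleged general one. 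Once you restrict to those specific curves, your approach works and is close in spirit to what the paper does in the proof of Proposition~\ref{r.fact1}(a); the paper's change-of-coordinates argument has the advantage of proving the stronger statement for an arbitrary loop $\alpha$ in one stroke.
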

\begin{proof}
 (a) 
 follows, for example, from the computations in Section 4 of \cite{LS}.
 
 (b) The more general fact: ``For any simple closed curve $\al$, the function $\cA\to \BR$,
defined by $ \bn \to I(S(\bn), \al)$, is piecewise-rational-linear" is well-known. It was formulated as Theorem 3 in \cite{Thurston} without proof. Here is a short proof based on \cite{Penner}. First if $\al$ is one of $P_i$ with $i \le 3g-3$ then the statement is obvious as $I(S(\bn), P_i)=n_i$. Suppose now $\al$ is an arbitrary simple closed curve. Choose a coordinate datum $(\cP', \cD')$ such that $\al$ is a curve in $\cP'$. By \cite{Penner}, the change from DT coordinates associated with $(\cP', \cD')$ to the one associated with $(\cP,\cD)$ is piecewise-rational-linear. The result follows.

\end{proof} 

It follows that  there is a piecewise-rational-linear $h: \BR^{6g-6} \to \BR$ such that 
\be 
h(\bn) = \sum_{i=1}^{9g-9} I(S(\bn), P_i) \quad \text{for all $\bn \in \cA$}.
\ee

Let $Q:= \{ x \in Q_\infty \mid h(x) \le 1\} $. Since $(I(P_i, \al))_{i=1}^{9g-9}$ totally determine $\al\in \cS$,  the set $ \cA \cap kQ$ is finite for any $k \ge 0$.

\begin{lemma}\label{r.Q}  The set $Q$ is the union of a finite number of convex polyhedra. Moreover, $Q$ has positive volume in $\BR^{6g-6}$.
\end{lemma}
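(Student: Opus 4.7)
I handle the two claims separately. Positive volume is straightforward and does not require boundedness, so I take it first; the main obstacle is proving that $Q$ is bounded.

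\emph{Positive volume.} Since $h$ is piecewise-rational-linear, it is continuous with $h(0)=0$; hence there is an open ball $B\subset\BR^{6g-6}$ around the origin on which $h<1$. The subset $U=\{x\in B:x_i>0\text{ for all }i\le 3g-3\}$ is open in $\BR^{6g-6}$, has strictly positive pants coordinates (so condition (ii) in the definition of $Q_\infty$ is vacuous), and satisfies $h<1$; therefore $U\subset Q$ and $\vol(Q)\ge\vol(U)>0$.

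\emph{Boundedness (the main obstacle).} First I show $h\ge 0$ on $\overline{Q_\infty}$. Any rational $q\in\BR^{6g-6}$ with strictly positive first $3g-3$ coordinates lies in $Q_\infty$ and admits an even integer $N$ for which $Nq$ has all even integer coordinates, so $Nq\in\cA$ and $h(Nq)=\sum_{i=1}^{9g-9}I(S(Nq),P_i)\ge 0$. Positive homogeneity yields $h(q)\ge 0$; density plus continuity extend the inequality to $\overline{Q_\infty}$. Now suppose $Q$ is unbounded: choose $x_n\in Q$ with $\|x_n\|\to\infty$ and take a subsequential limit $u=\lim x_n/\|x_n\|$, a unit vector in $\overline{Q_\infty}$ with $h(u)=\lim h(x_n)/\|x_n\|\le 0$, hence $h(u)=0$. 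The locus $\overline{Q_\infty}\cap\{h=0\}$ is a finite union of rational convex cones (from the piecewise-rational-linear structure of $h$) and, being nontrivial, contains a nonzero rational vector $y$.

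To derive a contradiction from such $y$, set $e=(1,\dots,1,0,\dots,0)$ with $3g-3$ ones followed by $3g-3$ zeros, and perturb: for small rational $\delta>0$ the point $y+\delta e$ has strictly positive pants coordinates and lies in $Q_\infty$. After shrinking the range of $\delta$, the segment $\{y+\delta e:0<\delta<\delta_0\}$ lies in a single piece of the piecewise-linear decomposition of $h$; on that piece $h$ equals a rational linear functional $L$, and continuity forces $L(y)=h(y)=0$, so $h(y+\delta e)=\delta L(e)$. Since $h\ge 0$ on $Q_\infty$, $L(e)\ge 0$. If $L(e)=0$, scaling $y+\delta e$ by a sufficiently large even integer produces a nonzero element of $\cA$ with $h=0$, that is, with vanishing intersection against every $P_i$, contradicting the injectivity of $\bn\mapsto(I(S(\bn),P_i))_{i=1}^{9g-9}$ from Lemma~\ref{r.plDT}(a). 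If $L(e)>0$, after assuming WLOG $y\in\BZ^{6g-6}$ and taking $\delta=1/M$, $k=2M$ for $M\in\BN$, the point $k(y+\delta e)=2My+2e$ lies in $\cA$ (positive pants coordinates, all entries even) and satisfies $h(2My+2e)=2L(e)$ independently of $M$; letting $M\to\infty$ exhibits infinitely many distinct elements of $\cA\cap cQ$ for $c=2L(e)$, contradicting the finiteness of $\cA\cap cQ$.

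\emph{Finite polyhedral decomposition.} With $Q$ bounded, I combine two finite rational subdivisions of $\BR^{6g-6}$: the one arising from the piecewise-rational-linear structure of $h$ (on each cell $h$ is linear), and the stratification of $Q_\infty$ by which of the first $3g-3$ coordinates vanish. Each cell of the common refinement is a rational convex cone, and intersecting with $Q_\infty\cap\{h\le 1\}$ yields a bounded convex polyhedron; $Q$ is the finite union of these, completing the proof.
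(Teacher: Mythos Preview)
Your proof is correct, but the route to boundedness is considerably more elaborate than the paper's. The paper simply invokes Proposition~\ref{r.rpwl}(a): if $Q$ were unbounded it would contain a rational ray $\{ty:t\ge 0\}$, and the even-integer multiples of $y$ on that ray lie in $\cA\cap Q$, contradicting finiteness of $\cA\cap Q$. Your argument instead builds a limit direction $u$ with $h(u)=0$, locates a rational $y$ in $\overline{Q_\infty}\cap\{h=0\}$, and then perturbs by $\delta e$ to push into the interior of $Q_\infty$ before splitting into the two cases $L(e)=0$ and $L(e)>0$.

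What your extra work buys is genuine: $Q_\infty$ is \emph{not} closed (points with $x_i=0$ and $x_{i+3g-3}<0$ are limits of points in $Q_\infty$), so $Q$ cannot literally be written as $\{h_i\ge c_i\}$ with continuous piecewise-rational-linear $h_i$, and Proposition~\ref{r.rpwl} does not apply on the nose. The rational ray one extracts a priori lies only in $\overline{Q_\infty}$, and your perturbation by $e$ is exactly the fix needed to get back into $Q_\infty$ and hence into $\cA$. So your argument is the more careful one; the paper's is shorter but tacitly assumes this wrinkle away. Your positive-volume argument (continuity of $h$ at $0$) is also a clean alternative to the paper's explicit interior point $\nu(\al)/(k+1)$.
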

\begin{proof} Let us prove that $Q$ is bounded. Suppose to the contrary that $Q$ is not bounded. By Proposition \ref{r.rpwl}(a), $Q$ contains a rational ray, which in turns contains infinitely many points whose coordinates are even integers. Since each such point is in $\cA$, the set $\cA \cap Q$ is infinite, a contradiction.  Thus $Q$ is bounded, and by Proposition \ref{r.rpwl}(b), $Q$ is the union of a finite number of convex polyhedra.

Choose  $\al\in \cS$ with $\nu(\al)= (n_1,\dots,n_{6g-6})$ satisfying $n_i >0$ and whenever $P_i, P_j, P_l$ bound a pair of pants then $n_i < n_j + n_l$.  Let $h(\nu(\al))= k$ then the point $\nu(\al)/(k+1)$ is an interior point of $Q$. Hence $Q$ has positive volume. 
\end{proof}

\subsection{Proof of Theorem \ref{thm.dim}, closed surface case} Let  $F_k(\KF)$ be  the $\BC$-subspace spanned by 
$\{ \al\in \cS \mid h(\nu(\al)) \le k\}$. 
By Proposition \ref{r.filt}, $(F_k(\KF))_{k=0}^\infty$ is a filtration of $\KF$ compatible with the product. 
Then $\{ S(\bn) \mid \bn \in \cA \cap k Q\}$ is a $\BC$-basis of $F_k(\KF)$, hence
\be
\dim_\BC F_k(\KF) = | \cA \cap kQ| = |\bA \cap kQ|. \label{eq.ine1}
\ee

If $\bn\in \Az \cap kQ  $, then by Theorem \ref{center} one has $T(S(\bn)) \in \ZF \cap F_k(\KF)  = F_k(\KF) $. Since the collection $\{ T(S(\bn)), \bn \in \Az \cap kQ \}$ is $\BC$-linearly independent, we have
\be 
\dim_\BC F_k (\ZF) \ge |\Az \cap kQ| = |\bAz \cap kQ|. \label{eq.ine2}
\ee  
Using Lemma \ref{r.lim2} then \eqref{eq.ine1} and \eqref{eq.ine2}, we get, for some integer $u>0$,
$$ \dim_{\ZF}{\KF} \le \lim_{k\to \infty } \frac{\dim_\BC F_k(\KF)}{\dim_\BC F_{k-u} (\ZF)} \le \lim_{k\to \infty } \frac{|\bA \cap kQ|}{|\bAz \cap (k-u)Q|} .$$
The latter, by \eqref{eq.vol3}, is  $[\bA: \bAz]$, which is equal to $D_\zeta(F)$ by Proposition~\ref{r.dim1a}. Thus, $ \dim_{\ZF}{\KF} \le D_\zeta(F)$, completing the proof of Theorem~\ref{thm.dim}.\qed

\section{Commutative subalgebras of $\tKF$}   \label{sec.42}  

In this section we study commutative subalgebras generated by collections of disjoint loops and describe their bases.

For a finite type surface $F$ and a root of unity $\zeta$ recall that $\tZF$ is the field of fractions of the center $\ZF$ of $\KF$, and $\tKF= \KF \ot _\ZF \tKF$ is a division algebra. Recall that if $F$ is a surface with punctures $\{p_1,\dots, p_k\}$ then $\overline{F}=F\cup\{p_1,\dots, p_k\}$.
 
\begin{prop}    \label{r.com}  
Suppose $C_1, \dots C_k$ are non-peripheral, non-trivial, disjoint, pairwise non-isotopic loops on a finite type surface $F=F_{g,p}$ of negative Euler characteristic. Let $\zeta$ be a root of unity with $m=\ord(\zeta^4)$. Let $\cC$ be the $\tZF$-subalgebra of $\tKF$ generated by $C_1, \dots , C_k$.
For $\bn=(n_1,\dots,n_k)\in \BN^k$ let $C^\bn= \prod_{i=1}^k (C_i)^{n_i} \in \cS$.

(a) Suppose  $\ord(\zeta) \neq 0 \pmod 4$ then $\dim_\tZF \cC = m^k$ and the set 
\begin{equation} B=\{ C^\bn  \mid   0\le n_i < m\}\end{equation}
 is a basis of $\cC$ over $\tZF$.

(b) Suppose $n=0 \pmod 4$ then $\dim_\tZF \cC = 2^t m^k$, where $t$ is the $\BZ_2$-rank of the subgroup 
$H$ of $H_1(\bF,\BZ_2)$ generated by $C_1, \dots, C_k$.

 Assume that after a re-indexing $\{C_1,\dots,C_t\}$ is a basis for $H$. The set
\begin{equation}\label{bml} B =\{ C^\bn  \mid  n_i < 2 m \ \text{for} \ i \le t, \  {n_i < m}\  \text{for} \ i > t\}\end{equation}
is a basis of $\cC$ over $\tZF$.
\end{prop}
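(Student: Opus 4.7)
The plan is to establish $B$ as a basis of $\cC$ by proving linear independence of $B$ and a matching upper bound on $\dim_{\tZF}\cC$.

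Since the $C_i$ are pairwise disjoint, they commute in $\KF$, so $\cC$ is a commutative $\tZF$-subalgebra of $\tKF$. As a commutative subdomain of the division algebra $\tKF$ (Corollary \ref{r.Fro}) of finite dimension over the field $\tZF$, the algebra $\cC$ is itself a field. For linear independence, consider the group homomorphism $\phi \colon \BZ^k \to \Rz$ defined by $\phi(\bn) = \dz(C^\bn) = \sum_i n_i \dz(C_i)$ (using that the $C_i$ commute and extending $\dz$ to $\tKF^\ast$ via \eqref{eq.Rd}). Proposition \ref{primitive}(b) determines $\ker\phi \cap \BN^k$, and a shift argument (adding an appropriate multiple of $(1,\dots,1)$ lying in $\ker\phi$) extends this to show $\ker\phi = m\BZ^k$ in case (a) and $\ker\phi = m\ker\psi$ in case (b), where $\psi \colon \BZ^k \to H_1(\bF,\BZ_2)$ sends $e_i$ to $[C_i]$. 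It is then direct to check that distinct $\bn, \bn' \in B$ lie in distinct cosets of $\ker\phi$: in case (b), if $\bn - \bn' = m\bm$ with $\bm \in \ker\psi$, the size constraints defining $B$ force $\bm_i = 0$ for $i > t$ and $\bm_i \in \{-1,0,1\}$ for $i \le t$, and since $[C_1],\dots,[C_t]$ form a $\BZ_2$-basis of $H$, $\psi(\bm) = 0$ then forces $\bm = 0$. By Corollary \ref{r.dimlow}(a), $B$ is $\tZF$-linearly independent.

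For the matching upper bound, I would form the tower of fields $\tZF = \cC_0 \subset \cC_1 \subset \cdots \subset \cC_k = \cC$ with $\cC_i := \tZF(C_1,\dots,C_i)$; each $\cC_i$ is a field by the same argument used for $\cC$, so it suffices to bound $[\cC_i : \cC_{i-1}]$. In case (a), $C_i^m \in \Sz$ yields $T_m(C_i) \in \tZF \subset \cC_{i-1}$, so $C_i$ is a root of $T_m(x) - T_m(C_i) \in \cC_{i-1}[x]$, a polynomial of degree $m$; hence $[\cC_i : \cC_{i-1}] \le m$. The same argument with $2m$ in place of $m$ handles $i \le t$ in case (b) via $C_i^{2m} \in \Sz$. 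For $i > t$ in case (b), write $[C_i] = \sum_{j \le t} a_{ij}[C_j]$ with $a_{ij} \in \{0,1\}$ and set $\bm := e_i + \sum_j a_{ij} e_j$; then $\psi(\bm) = 2[C_i] = 0$ in $H_1(\bF, \BZ_2)$, so $C^{m\bm} \in \Sz$ and therefore $T_m(C_i) \prod_{j\,:\,a_{ij}=1} T_m(C_j) \in \tZF$. The factor $\prod_{j\,:\,a_{ij}=1} T_m(C_j)$ is a nonzero element of the field $\cC_{i-1}$, so inverting it we conclude $T_m(C_i) \in \cC_{i-1}$; hence $C_i$ is a root of a polynomial of degree $m$ over $\cC_{i-1}$.

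Multiplying the factor bounds gives $\dim_{\tZF}\cC \le m^k$ in case (a) and $(2m)^t m^{k-t} = 2^t m^k$ in case (b), which matches the lower bound $|B|$ coming from linear independence. Hence $B$ is a $\tZF$-basis of $\cC$, pinning down the dimension. The main subtlety is the subcase $i > t$ of part (b): the identity $T_m(C_i) \prod T_m(C_j) \in \tZF$ does not by itself express $C_i^m$ as a combination of lower powers with $\tZF$ coefficients, and one must exploit that $\cC_{i-1}$ is a \emph{field} in order to invert $\prod_j T_m(C_j)$ and conclude $T_m(C_i) \in \cC_{i-1}$.
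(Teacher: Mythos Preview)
Your proof is correct and follows essentially the same approach as the paper: linear independence via distinctness of $\dz$-values (using the shift trick together with Proposition~\ref{primitive}(b)), and the spanning/upper bound via the Chebyshev relations $T_m(C_i)\in\tZF$, $T_{2m}(C_i)\in\tZF$, and $T_m(C_i)\prod_j T_m(C_{j})\in\tZF$. The only organizational differences are that you run a full one-curve-at-a-time tower $\cC_0\subset\cC_1\subset\cdots\subset\cC_k$ while the paper groups the first $t$ curves together into a single subalgebra $\cC_0$, and you phrase the independence step in terms of $\ker\phi$ rather than arguing directly; you are also more explicit than the paper about why $\cC_{i-1}$ is a field, which is indeed what justifies the inversion of $\prod_j T_m(C_j)$.
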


\begin{proof} 

(a) In this case $(C_i)^m \in \Sz$ for all each $i$. By Theorem \ref{center}, 
 $T_m(C_i)=T((C_i)^m) \in \ZF$, which implies that the degree of $C_i$ over $\tZF$ is $\le m$. Hence  $B$  spans $\cC$  as a vector space over $\tZF$. 
 
 By Corollary \ref{r.dimlow}, to prove that $B$ is linearly independent it is enough to show that $\dz(x), x \in B$,  are distinct. Assume $\dz(C^\bn) = \dz(C^{\bn'})$. Let ${\bf m}$ be the $k$-tuple all of whose entries are $m$. Since $\dz$ is a monoid homomorphism and $\dz(C^\bm)=0$, we have $\dz(C^{\bm-\bn + \bn'})=0$. By Proposition \ref{primitive}(b), for each $i$ we have
$$m-n_i+ n'_i =0\pmod m.$$  Since $0\leq n_i,n_i'\leq m-1$, the only way this can happen is if $n_i=n_i'$.

(b) Let $\cC_0$ be the $\tZF$-subalgebra generated by $C_1, \dots, C_t$. Since 
 $T_{2m}(C_i)\in \ZF$, the set $B_0=\{ C_1^{n_1} \dots C_t^{n_t} \mid n_i < 2m\}$ spans $\cC_0$ over $\tZF$.  Suppose $i>t$. There are $j_1, \dots j_l \le t$ such that the simple diagram $\beta= C_i \cup  C_{j_1} \cup \dots \cup  C_{j_l}$ is even. This implies that $\beta^m\in \Sz$. Hence  $T(\beta^m) \in \ZF$ by Theorem \ref{center}. Using the definition of $T(\beta^m)$, 
$$\ZF \ni T(\beta^m) = T_m(C_i) \left[T_m(C_{j_1}) \dots T_m(C_{j_l}) \right].$$
The element in the square bracket is in $\cC_0$. It follows that $T_m(C_i)\in \cC_0$, which implies that the degree of $C_i$ over $\cC_0$ is less than equal to $m$ for each $i\ge t+1$. Hence  $B_1:=\{ C_{t+1}^{n_{t+1}} \dots C_k^{n_k} \mid n_i < m\}$ spans $\cC$ over $\cC_0$. Combining the spanning sets $B_0$ and $B_1$, we get that $B$ spans $\cC$ over $\tZF$. 

Let us show that $\dz(x), x \in B$, are distinct. Suppose $\dz(C^\bn )= \dz(C^{\bn'})$. Let $\bm=(m_1,\dots,m_k)$ where $m_i=2m$ for $i \le t$ and $m_i=m$ for $i>t$. Then $\dz(C^\bm)=0$. It follows that $\dz(C^{\bm -\bn + \bn'})=0$. By Proposition \ref{primitive}(b), we have $\bm -\bn + \bn'\in m\BZ^k$. This forces $n_i = n'_i$ for $i>t$ as in this case $ m_i -n_i + n'_i$ is sandwiched between $1$ and $2m-1$. Further  $C^{(\bm -\bn + \bn)/m}$ is even by Proposition \ref{primitive}(b). Since $C_1, \dots, C_t$ are linearly independent over $\BZ_2$ in $H_1(\bF,\BZ_2)$, for each $i\le t$,  $(m_i -n_i + n'_i)/m$ is even. As $m_i=2m$ and $0\le n_i, n'_i <2m$, this forces $n_i= n'_i$. Thus $\dz(x), x \in B$, are distinct, and by Corollary \ref{r.dimlow} the set $B'$ is linearly independent over $\tZF$.
 \end{proof}

\begin{cor}  Assume the conditions of Proposition \ref{r.com}.
 \label{r.deg}
 Let $\cC'$ be the $\tZF$-subalgebra of $\tKF$   generated by $C_1, \dots, C_{k-1}$.  The minimal polynomial of $C_k$ over $\cC^{'}$ is of the form
$T_{m'}(x) - u$, where $u\in \cC'$ and 
$$ m' = \begin{cases}
 m \quad & \text{if } \ord(\zeta)  \neq 0 \pmod 4, \ \text{or}\  n = 0 \pmod 4 \ \text{and} \ k>t, \ \\
 2m   & \text{if} \  \ord(\zeta) =  0 \pmod 4 \ \text{and} \ k=  t. \end{cases}   
 $$
 Moreover $u$ is transcendental over $\BQ$.
  \end{cor}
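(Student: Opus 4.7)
The plan is to prove the three assertions in sequence: that the degree $[\cC:\cC']$ equals $m'$, that $T_{m'}(C_k)$ lies in $\cC'$, and that this element is transcendental over $\BQ$.

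First I would compute $[\cC:\cC']$ by dividing the dimensions provided by Proposition~\ref{r.com}. In case~(a) of that proposition the ratio is $m^k/m^{k-1} = m = m'$. In case~(b), if $k > t$ then the $\BZ_2$-rank of $\langle C_1,\dots,C_{k-1}\rangle \subset H_1(\bF,\BZ_2)$ is still $t$, because the class of $C_k$ already lies in the $\BZ_2$-span of the earlier classes; hence $\dim_{\tZF}\cC' = 2^{t}m^{k-1}$ and the quotient is $m = m'$. If instead $k = t$, then $\langle C_1,\dots,C_{k-1}\rangle$ has rank $t-1$, so $\dim_{\tZF}\cC' = 2^{t-1}m^{k-1}$, giving ratio $2m = m'$. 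In every subcase the quotient equals the claimed value of $m'$.

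Next I would exhibit $u := T_{m'}(C_k)$ inside $\cC'$. When $\ord(\zeta)\not\equiv 0\pmod 4$, the diagram $C_k^m$ lies in $\Sz$, so $T_m(C_k) = T(C_k^m) \in \ZF \subset \cC'$. When $\ord(\zeta)\equiv 0\pmod 4$ and $k = t$, the diagram $C_k^2$ is even, hence $C_k^{2m}\in\Sz$ and $T_{2m}(C_k) = T(C_k^{2m}) \in \ZF$. The delicate subcase is $\ord(\zeta)\equiv 0\pmod 4$ with $k>t$: choose $j_1,\dots,j_l\in\{1,\dots,t\}$ so that the simple diagram $\beta := C_k\cup C_{j_1}\cup\cdots\cup C_{j_l}$ is even, as in the proof of Proposition~\ref{r.com}(b). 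Then $T(\beta^m) = T_m(C_k)\prod_i T_m(C_{j_i})$ lies in $\ZF$. Since $\cC'$ is a commutative domain finite-dimensional over the field $\tZF$, it is itself a field by Proposition~\ref{r.sfield}(a), so the nonzero element $\prod_i T_m(C_{j_i})\in\cC'$ is invertible there; dividing yields $T_m(C_k)\in\cC'$.

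With $u\in\cC'$ in hand, the polynomial $T_{m'}(x) - u \in \cC'[x]$ is monic of degree $m'$ (the leading coefficient of $T_{m'}$ is $1$) and vanishes at $C_k$. Because $C_k$ generates $\cC$ over $\cC'$ and $[\cC:\cC'] = m'$, the minimal polynomial of $C_k$ over $\cC'$ has degree exactly $m'$, so it must equal $T_{m'}(x) - u$. For transcendence of $u$ over $\BQ$, iterating the product-to-sum identity $T_jT_l = T_{j+l} + T_{|j-l|}$ yields
\[
T_{m'}(y)^n = T_{nm'}(y) + \sum_{0\le j < nm'} c_{n,j}\, T_j(y), \qquad c_{n,j}\in\BZ.
\]
If a nontrivial relation $\sum_{n=0}^{N} a_n u^n = 0$ with $a_n\in\BQ$ and $a_N\neq 0$ existed, then expanding and collecting in the $T_j(C_k)$ would exhibit $a_N$ as the coefficient of $T_{Nm'}(C_k) = T(C_k^{Nm'})$. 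But the $T(C_k^j)$ are distinct elements of the Chebyshev $\BC$-basis of $\KF$ from Theorem~\ref{center}(a), hence $\BQ$-linearly independent, forcing $a_N = 0$ and giving a contradiction.

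The main obstacle I anticipate is the hybrid subcase $\ord(\zeta)\equiv 0\pmod 4$ with $k>t$: there $T_m(C_k)$ is not itself a central skein and must be recovered by inverting the auxiliary Chebyshev product $\prod_i T_m(C_{j_i})$, a step that is legitimate only because $\cC'$ has been verified to be a commutative field extension of $\tZF$. Once that inclusion is established, the transcendence argument is essentially formal, requiring only the isolation of the top Chebyshev term $T_{nm'}$ in each power $T_{m'}(y)^n$.
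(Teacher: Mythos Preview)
Your proof is correct and follows essentially the same approach as the paper: compute $[\cC:\cC']$ from the dimension formulas of Proposition~\ref{r.com}, verify $T_{m'}(C_k)\in\cC'$ by the same case analysis used inside that proposition (with the extra observation that $\cC'$ is a field so the auxiliary Chebyshev product can be inverted), and deduce transcendence from the $\BC$-linear independence of the $T_j(C_k)$. Your version is somewhat more explicit than the paper's---in particular you spell out the inversion step and the isolation of the top term $T_{Nm'}(C_k)$---but the underlying argument is the same.
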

 
  \begin{proof} Since $\cC=\cC'(C_k)$, the degree of $C_k$ over $\cC'$ is 
  $$[\cC:\cC']= \frac{\dim_\tZF \cC}{\dim_\tZF \cC'} \ , $$ which is equal to $m'$ using the  formula for $\dim_\tZF C$ and $\dim_\tZF C'$ given by  Proposition \ref{r.com}. In the proof of Proposition \ref{r.com} we see that $T_{m'}(C_k) = u\in \cC'$. Hence  $T_{m'}(x) -u$ is the minimal polynomial of $C_k$ over $\cC'$.

  Suppose $u= T_{m'}(C_k)$ is algebraic over $\BQ$. Since $m'>0$ this implies $C_k$ is algebraic over $\BQ$.  But  
  $\{C_k^i, i\geq 0\}$ is a subset of $\cS$, which is a $\BC$-basis of $\KF$ and  hence the non-trivial $\BQ$-linear combination of these elements is  never $0$. This shows $u$ is transcendental over $\BQ$.
\end{proof}

  \section{Calculation of the reduced trace}\label{calcoftr}
   Let $F$ be a finite type surface and $\zeta$ a root of unity. Since  $K_{\zeta}(F)$ is finitely generated as a module over its center $\ZF$,  it has a reduced trace.   The goal of this section is to find a formula for computing it.


By Theorem \ref{center}  the set $\{ T(\al) \mid \al \in \cS\}$ is a $\BC$-basis of $\KF$. Therefore it is enough to calculate $\tr(T(\al))$ for each $\al \in \cS$.

\begin{theorem} 
\label{thm.trace} 
Let $F$ be a finite type surface, $\zeta$ be a root of 1,  and 
$$
\tr: \tilde{K}_{\zeta}(F)\rightarrow \tZ_{\zeta}F)
$$
be the reduced trace. For $\al\in \cS$ one has
\be
\tr(T(\al))= \begin{cases}
T(\al) \quad &\text{if $T(\al)$ is central, i.e., }  \al\in \Sz \\
0     & \text{otherwise.}
\end{cases}
\ee
 \end{theorem}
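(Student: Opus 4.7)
The central case $\alpha\in \Sz$ follows immediately: by Theorem \ref{center}(b), $T(\alpha)\in \ZF \subset \tZF$, so left multiplication by $T(\alpha)$ on $\tKF$ is scalar multiplication, and hence $\Tr_{\tKF/\tZF}(T(\alpha)) = (\dim_\tZF \tKF)\cdot T(\alpha)$, giving reduced trace $T(\alpha)$.

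For the non-central case $\alpha\notin \Sz$, I first write $\alpha = \prod_{i=1}^k C_i^{l_i}$ with pairwise disjoint, pairwise non-isotopic, non-trivial loops $C_1,\ldots,C_k$; peripheral components of $\alpha$ are central and factor out harmlessly, so I may assume no $C_i$ is peripheral. Let $\cC = \tZF[C_1,\ldots,C_k]$, which by Proposition \ref{r.sfield} is a subfield of the division algebra $\tKF$. Proposition \ref{r.trace1}(b) reduces the computation to $\tr_{\cC/\tZF}(T(\alpha))$, which I address along the tower $\tZF \subset \cC_1 \subset \cdots \subset \cC_k = \cC$ with $\cC_i = \tZF[C_1,\ldots,C_i]$. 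By Corollary \ref{r.deg}, each step $\cC_i/\cC_{i-1}$ is simple with minimal polynomial $T_{d_i}(x) - u_i$, where $d_i \in \{m,2m\}$ depends on the homological data of $C_i$ relative to the earlier curves and $u_i \in \cC_{i-1}$.

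The key single-step computation is the reduced trace of a Chebyshev monomial: parametrizing $C_i\leftrightarrow 2\cos\phi_i$, the $d_i$ Galois conjugates of $C_i$ over $\cC_{i-1}$ are $\{2\cos(\phi_i + 2\pi j/d_i)\}_{j=0}^{d_i-1}$, so
\[
\tr_{\cC_i/\cC_{i-1}}(T_l(C_i)) \;=\; \tfrac{1}{d_i}\sum_{j=0}^{d_i-1} 2\cos\!\bigl(l\phi_i + 2\pi lj/d_i\bigr),
\]
which equals $T_l(C_i)$ when $d_i\mid l$ (all terms coincide) and vanishes otherwise (geometric series of nontrivial $d_i$-th roots of unity); equivalently this reflects the vanishing of the $y^{d_i-1}$ coefficient of $T_{d_i}(y)$ via Proposition \ref{r.trace1}(a). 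Applying the tower rule $\tr_{\cC/\tZF} = \tr_{\cC_1/\tZF}\circ\cdots\circ\tr_{\cC_k/\cC_{k-1}}$ to $T(\alpha) = \prod_i T_{l_i}(C_i)$, each factor $T_{l_i}(C_i)$ at its stage is either killed (if $d_i\nmid l_i$) or collapses to $T_{l_i/d_i}(u_i)\in \cC_{i-1}$ and descends to the next level.

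The main obstacle lies in the mixed-homology subcase $\ord(\zeta)\equiv 0\pmod{4}$: when $[C_i] = \sum_{j\in J_i}[C_j]$ in $H_1(\bF;\BZ_2)$ for some $i$, the identity $T_m(C_i)\cdot\prod_{j\in J_i} T_m(C_j) \in \tZF$ forces $u_i$ to involve inverses $T_m(C_j)^{-1}$, so secondary cancellations of the form $\sum_j (-1)^j/T_m(C_j)^s = 0$ at lower levels can still kill the iterated trace even after every divisibility $d_i\mid l_i$ is satisfied. Tracking these cancellations carefully along the tower, I will verify that $\tr_{\cC/\tZF}(T(\alpha))$ vanishes precisely when the combined divisibility and homological-parity conditions characterizing $\Sz$ (Section \ref{sec.Cheb}) fail, and otherwise recovers $T(\alpha)$; by Proposition \ref{primitive}(a) this dichotomy coincides with $\alpha\in \Sz$, completing the proof.
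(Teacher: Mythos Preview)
Your framework is sound and matches the paper's in its broad outline: the tower $\tZF\subset\cC_1\subset\cdots\subset\cC_k$, the minimal polynomials $T_{d_i}(x)-u_i$ from Corollary~\ref{r.deg}, and the transitivity of trace. Your Galois-conjugate computation of $\tr_{\cC_i/\cC_{i-1}}(T_l(C_i))$ (which can be made rigorous by writing $u_i=w+w^{-1}$ in an algebraic closure, using that $u_i$ is transcendental over $\BQ$) is in fact a cleaner alternative to the paper's Lemma~\ref{r.trace0}(b), which reaches the same conclusion via an explicit Chebyshev remainder formula (Lemma~\ref{r.rrr4}). In the case $\ord(\zeta)\not\equiv 0\pmod 4$, and in the homologically-independent subcase of $\ord(\zeta)\equiv 0\pmod 4$, your iteration works cleanly: each $u_i\in\tZF$, so the collapsed factor $T_{l_i/d_i}(u_i)$ is central and the next step proceeds on a product of the same shape; and if every $d_i\mid l_i$ then one checks directly that $\alpha\in\Sz$, so the contrapositive finishes.

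The genuine gap is in the mixed-homology subcase. When $[C_k]$ lies in the $\BZ_2$-span of the earlier classes, Corollary~\ref{r.deg} gives $d_k=m$ but $u_k=T_m(C_k)\in\cC_{k-1}\setminus\tZF$; after collapsing, the element you must trace at the next level is $\prod_{i<k}T_{l_i}(C_i)\cdot T_{l_k/m}(u_k)$, which is no longer a product $\prod T_{l_i'}(C_i)$ and so your single-step formula does not apply to it. The issue is real: one can have $d_i\mid l_i$ for every $i$ while $\alpha\notin\Sz$ (take $[C_3]=[C_1]+[C_2]$ and $l_1=l_2=2m$, $l_3=m$), so the vanishing must come from a further cancellation inside $\cC_t$. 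Your sentence ``Tracking these cancellations carefully along the tower, I will verify\ldots'' is exactly where the proof is missing; this is the heart of the matter in this subcase, not a routine check.

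The paper avoids this difficulty altogether. Instead of descending the full tower, it first uses the Chebyshev identities
\[
T_n = T_{2m}\,T_{n-2m} - T_{|n-4m|},\qquad T_n = T_m\,T_{n-m} - T_{|n-2m|}
\]
together with the centrality of $T_{2m}(C_i)$ (always) and of $\prod_{i}T_m(C_{j_i})$ (along a homological dependency) to reduce $T(\alpha)$, modulo $\ZF$-multiples of smaller terms, to a situation where after re-indexing one has $0<m_k<d_k$. A \emph{single} application of Lemma~\ref{r.trace0}(a)--(b) at the top step $\cC/\cC'$ then gives $\Tr_{\cC/\tZF}(x_1\,T_{m_k}(C_k))=0$ for $x_1=\prod_{i<k}T_{m_i}(C_i)\in\cC'$, with no need to iterate further. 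If you want to salvage your tower argument, the cleanest fix is to adopt this reduction step before invoking your single-step trace formula once at the top.
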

 First consider the case when $F_{g,p}$ has non-negative Euler characteristic.
The skein algebras of  $F_{g,p}$  for $g=0$ and $p=0,1,2$ are commutative, so the result is trivial. For $F_{1,0}$ and $n$ not divisible by $4$ this is proved in \cite{AF1}. The remaining case of $K_{\zeta}(F_{1,0})$ and $n$ divisible by $4$ can be proved using similar methods. Hence we will assume that $F$ has positive Euler characteristic.
 \no{There are only four connected oriented surfaces of nonnegative Euler characteristic. Of these only the closed surface of genus one has noncommutative skein algebras, in the other cases as the skein algebra is a one dimensional vector space over its center, the normalized trace is the identity map. In the case of the torus, when the order of $\zeta$ has remainder $2$ on division by $4$, the normalized trace was computed in \cite{AF1} and the answer is the same as in Theorem \ref{thm.trace}.  The case when $n$ works similarly to this case.  Hence we focus on the case when $n$ is divisible by $4$. Let $(a,b)_c\in K_\zeta(\Sigma_{1,0})$ where $(a,b)\in \mathbb{Z}\times \mathbb{Z}$ be the noncommutative cosine from \cite{FG}.  We have $(a,b)_c=(-a,-b)_c$.  Also,
 \be (a,b)_c*(e,f)_c=\zeta^{\left|\begin{matrix} a & b \\ e & f \end{matrix}\right|}(a+e,b+f)_c+ \zeta^{-\left|\begin{matrix} a & b \\ e & f \end{matrix}\right|}(a-e,b-f)_c.\ee
 Letting $m=n/4$, it is easy to see that $(a,b)_c$ is central in $K_{\zeta}(\Sigma_{1,0})$ if and only if $2m$ divides both $a$ and $b$.  The trace of any $(a,b)_c$ where $a$ or $b$ is not divisible by $m$ is $0$ as its shifts the grading by $\mathbb{Z}_2$-homology. Hence we only need to understand the case or $(a,b)_c$
 where $a$ and $b$ are divisible by $m$.  This is a four dimensional vector space, over its center and the computation of trace is immediate and agrees with the formula in the theorem above.}
 
\subsection{Lemma on traces}  \label{sec.41} Recall that $T_l(x)$ is defined in Section \ref{sec.Cheb}.
\begin{lemma}
\label{r.trace0}
Suppose $k_1\subset k_2$ are finite field extensions of a field $k_0$ and $x_1\in k_1, x_2 \in k_2$.\\
(a) If\  $\Tr_{k_2/k_1}(x_2)=0$ then\  $\Tr_{k_2/k_0} (x_1 x_2)=0$.\\
(b) Assume the minimal polynomial of $x_2$ over $k_1$ is $T_l(x) - u$, where $u\in k_1$  is transcendental over $\BQ$,  and $l\ge 2$. For $0< s < l$ we have
\be
\Tr_{k_2/k_0} (x_1 T_s(x_2))=0.
 \ee
\end{lemma}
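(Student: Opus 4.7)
Part (a) is the standard transitivity of trace, and part (b) reduces to part (a) once one establishes $\Tr_{k_2/k_1}(T_s(x_2)) = 0$ for $0 < s < l$. The main technical step is the computation of this trace via the roots of the minimal polynomial.

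For part (a), I would invoke the tower formula $\Tr_{k_2/k_0} = \Tr_{k_1/k_0} \circ \Tr_{k_2/k_1}$. Because $x_1 \in k_1$, left multiplication by $x_1$ on $k_2$ is $k_1$-linear, so the trace pulls out the scalar: $\Tr_{k_2/k_1}(x_1 x_2) = x_1 \Tr_{k_2/k_1}(x_2) = 0$ by hypothesis. Applying $\Tr_{k_1/k_0}$ finishes the proof.

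For part (b), the plan is as follows. Since $T_l(X) - u$ is the minimal polynomial of $x_2$ over $k_1$ and $[k_2:k_1] = l$, it coincides with the characteristic polynomial of left multiplication by $x_2$. Hence, writing $\xi_1, \dots, \xi_l$ for the roots of $T_l(X) - u$ in a splitting field over $k_1$, one has
$$ \Tr_{k_2/k_1}(T_s(x_2)) = \sum_{i=1}^l T_s(\xi_i). $$
To evaluate this sum I would use the Chebyshev substitution $\xi = z + z^{-1}$, under which $T_n(\xi) = z^n + z^{-n}$. Pick $w$ in some extension satisfying $w^l + w^{-l} = u$, and let $\omega$ be a primitive $l$-th root of unity; the equation $T_l(X) = u$ then becomes $z^{2l} - u z^l + 1 = 0$, whose roots pair up under $z \leftrightarrow z^{-1}$ to give the $l$ roots
$$ \xi_k = \omega^k w + \omega^{-k} w^{-1}, \qquad k = 0, 1, \dots, l-1, $$
of $T_l(X) - u$. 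Then
$$ \sum_{k=0}^{l-1} T_s(\xi_k) = w^s \sum_{k=0}^{l-1} \omega^{ks} + w^{-s} \sum_{k=0}^{l-1} \omega^{-ks} = 0, $$
because for $0 < s < l$ both $\omega^s$ and $\omega^{-s}$ are nontrivial $l$-th roots of unity and each geometric sum vanishes. Once $\Tr_{k_2/k_1}(T_s(x_2)) = 0$ is in hand, applying part (a) with $T_s(x_2)$ in place of $x_2$ completes the proof.

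The main subtle point is justifying that $T_l(X) - u$ has $l$ \emph{distinct} roots, so that the sum-over-roots formula for the trace is valid and the enumeration $\xi_k = \omega^k w + \omega^{-k} w^{-1}$ really produces them all. This is where the transcendence hypothesis on $u$ enters: the critical values of $T_l$ are $\pm 2$ (both algebraic over $\BQ$), so $T_l(X) - u$ could have a multiple root only when $u \in \{\pm 2\}$, a possibility ruled out by transcendence. Thus $T_l(X) - u$ is separable, the $\xi_k$ are distinct, and the computation above goes through.
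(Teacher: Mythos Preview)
Your proof is correct and takes a genuinely different, more direct route than the paper. One small slip: you assert $[k_2:k_1]=l$, but the lemma does not assume $k_2=k_1(x_2)$. This is harmless, since for any $y\in k_1(x_2)$ one has $\Tr_{k_2/k_1}(y)=[k_2:k_1(x_2)]\,\Tr_{k_1(x_2)/k_1}(y)$ (exactly the argument behind Proposition~\ref{r.trace1}), so it suffices to prove $\Tr_{k_1(x_2)/k_1}(T_s(x_2))=0$, and then your computation applies verbatim with $k_2$ replaced by $k_1(x_2)$.

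The paper proceeds quite differently for part (b). Rather than summing over roots, it determines the minimal polynomial of $y=T_s(x_2)$ over $k_1$: setting $t=l/\gcd(l,s)$ and $m=ts/l$, it shows this minimal polynomial is $T_t(x)-T_m(u)$, and then invokes Proposition~\ref{r.trace1}(a) together with the fact that $T_t$ (being even or odd) has vanishing subleading coefficient. Proving that no polynomial of degree less than $t$ annihilates $y$ requires an auxiliary lemma computing the remainder of $T_{ql+r}(x)$ upon division by $T_l(x)-u$ in terms of Chebyshev polynomials of the second kind, followed by a case analysis that uses the transcendence of $u$ to rule out algebraic coincidences among the $S_q(u)$. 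Your approach sidesteps all of this: the Chebyshev substitution $\xi=z+z^{-1}$ makes the roots of $T_l(X)-u$ explicit, and the trace reduces to a geometric sum over $l$-th roots of unity. The transcendence hypothesis enters only to guarantee $u\neq\pm 2$, hence separability. What the paper's approach buys is the minimal polynomial of $T_s(x_2)$ itself, which is extra information; what yours buys is brevity and transparency.
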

\begin{proof}
(a) A  property of the trace is that for any $x\in k_2$ we have
$$\Tr_{k_2/k_0}(x)= \Tr_{k_1/k_0} \left( \Tr_{k_2/k_1}(x)\right),$$
 see eg \cite{Mc}. With $x= x_1 x_2$, we have
$$   \Tr_{k_2/k_0}(x_1 x_2)= \Tr_{k_1/k_0} \left( \Tr_{k_2/k_1}(x_1 x_2)\right) = \Tr_{k_1/k_0} \left(x_1  \Tr_{k_2/k_1}(x_2)\right) =0.$$
(b) From (a) it is enough to show that $\Tr_{k_2/k_1}(T_s(x_2))=0$.

Let $t$ be the smallest positive integer such that $l|ts$.
Note that $ t\ge 2$. Denote $m= ts/l$. Define $u_0=1$ and $u_i= T_i(u)$ for $i\ge 1$.
 
{\em Claim.} The minimal polynomial of $y:= T_s(x_2)$ over $k_1$ is $P=T_t(x) - u_m$.

Assume the claim for now.  Since $t\ge 2$ and $T_t$ is either even or odd polynomial, the second-highest coefficient of $T_t-v$ is 0. By Proposition \ref{r.trace1}(a), we have $\Tr_{k_2/k_1}(T_s(x_2))=0$. Thus (b) follows from the claim.

{\em Proof of the Claim.} First note the $P(y)=0$. In fact, we have
$$ T_t(y)= T_t(T_s(x_2))= T_{ts}(x_2)= T_{ml}(x_2)= T_m(T_l(x_2))= T_m(u)=u_m,$$
which shows $P(y)=0$.
Let us show that no polynomial $Q(x) $ of degree $< t$ can annihilate $y$. Since $\{T_i(x)\}$ forms a basis, we can write $Q(x)= \sum_{i=0}^ {d} c_i T_i(x)$ with $d<t$, $c_i\in k_1$, and $c_d=1$.
We have
\be
0= Q(y) =  \sum_{i=0}^ {d} c_i T_{is}(x_2).
\ee
Since $T_l(x) -u$ is the minimal polynomial of $x_2$, we have
\begin{equation}   \sum_{i=0}^ {d} c_i T_{is}(x) \equiv 0 \mod {(T_l(x) -u)}.
\end{equation}

Let
$R_i(x)$ be the remainder obtained upon dividing $T_{is}$ by $T_l(x) -u$, then we must have
\begin{equation}  \label{eq.04a}
 \sum_{i=0}^ {d} c_i  R_i(x) = 0. \end{equation}
To finish the proof we need another lemma: 
 \begin{lemma} \label{r.rrr4}  Suppose $ 0 \le r <l$. When  $T_{ql+r}(x)$ is divided by $(T_l(x) -u)$, the remainder is    $S_q(u) T_r (x) - S_{q-1}(u) T_{l-r}(x) $, 
  where $S_i(x)$ is the Chebyshev polynomial of the second kind defined recursively by
  $$ S_0=1, S_1(x)=x, S_n(x) = x S_{n-1}(x) - S_{n-2}(x).$$
  \begin{proof}[Proof of Lemma \ref{r.rrr4}]  One can easily check that
\be
S_i (u) = \sum _{0 \le j \le (i/2)} u_{i- 2j}=u_i + u_{i-2 } + \dots. \ee
Using $T_m T_n = T_{m+n} + T_{|m-n|}$, we get
\begin{align*}
T_{ql+r}(x)  &=  T_r(x) T_{ql}(x) - T_{ql-r}(x) =  T_r(x) T_{q}(T_l(x)) - T_{(q-1)l+l-r}(x) \\
&\equiv  T_r(x) u_q  - T_{(q-1)l+l-r}(x)   \pmod {(T_l(x) -u)}\\
&\equiv  T_r(x) u_q  - T_{l-r}(x) u_{q-1} + T_{(q-2)l+r}(x)   \pmod {(T_l(x) -u)} \\
&\equiv  T_r(x) u_q  - T_{l-r}(x) u_{q-1} +  T_r(x) u_{q-2} - T_{(q-3)l+l-r}(x)   \pmod {(T_l(x) -u)} \\
&\equiv T_r(x) (u_q + u_{q-2} + u_{q-4} + \dots) - T_{l-r}(x)  (u_{q-1} + u_{q-3} + \dots) \pmod {(T_l(x) -u)} \\
&\equiv  S_q(u) T_r (x) - S_{q-1}(u) T_{l-r}(x) \pmod{(T_l(x) -u)}.
 \end{align*}  \end{proof}

 \no{
\be
T_{ql+r}(x)  \equiv S_q(u) T_r (x) - S_{q-1}(u) T_{l-r}(x) \pmod{(T_l(x) -u)}.
\ee
}
 \end{lemma}

 Suppose $ds = ql + r$, with $0\le r <l$.  By Lemma \ref{r.rrr4},
  \be 
 R_d = S_q(u)  T_r(x)  - S_{q-1}(u) T_{l-r}(x). 
 \ee
 Note that 
 
 (*) there is no  index $j \in [0,d-1]$ such that $js$ has  remainder $r$ when divided by $l$.
 
 Consider two cases: (i) $r=l-r$ and (ii) $r\neq l-r$.
 
 (i) $r=l-r$.  Then $R_d= (S_q(u)   - S_{q-1}(u))   T_r(x)$. From (*) we see that no index $j \neq d$ contributes to the term $T_r(x)$ in \eqref{eq.04a}. Hence      $S_q(u)   - S_{q-1}(u) =0$, contradicting the fact that $u$ is transcendental over $\BQ$.

  (ii) $r\neq l-r$. There is exactly one index $j\in [0,d-1]$ such that $js$ has remainder $l-r$  when  divided by $l$, which is $j= (t-d)$. Suppose  $ (t-d) s = q' l + (l-r),$
  then 
  $$ R_{t-d} = S_{q'}(u)  T_{l-r}(x)  - S_{q'-1}(u) T_{r}(x).$$
By looking at the coefficients of $ T_r(x) $ and $ T_{l-r}(x) $ in \eqref{eq.04a}, we get
(with    $c= c_{t-d}$)
 \begin{align}
 S_q(u)  - c S_{q'-1}(u) &= 0 \\
 -S_{q-1}(u) + c S_{q'}(u) &= 0. 
 \end{align}    
 Multiply the first by $S_{q'}(u)$, the second by $S_{q'-1}(u)$, and sum up the two, we get
 $$   S_q(u) S_{q'}(u) -  S_{q'-1}(u) S_{q-1}(u)=0
 $$
 contradicting the fact that $u$ is transcendental over $\BQ$. \end{proof}


\subsection{Proof of Theorem \ref{thm.trace}}

\begin{proof} Note that if $z\in \ZF$ then $\tr(z)= z$, and more generally
\be
 \tr(zx)= z\tr(x) \quad\text{ if } \ z\in \ZF. \label{eq.cen1}
 \ee
 Hence we  assume that $T(\al)\not \in \ZF$ and we will show $\tr(T(\al) )=0$.   
 
 Assume $\al = \prod_{i=1}^k (C_i)^{m_i}$, where $C_1, \dots, C_k$ are non-trivial loops, no two of which are isotopic, then $T(\al) = \prod_{i=1}^k T_{m_i}(C_i)$. If a component $C_i$ is peripheral then $C_i\in \ZF$, and \eqref{eq.cen1} shows that one reduces to the case when non of $C_i$ is peripheral.
 
 From the product to sum formula  \eqref{eq.Che1}, we have
 \begin{align}
 T_n (x) &= T_{2m}(x) T_{n-2m} - T_{|(n -2m)-2m|}(x) \quad \text{if} \ n \ge 2m \label{eq.red2}\\
 T_n (x) &= T_{m}(x) T_{n-m} - T_{|(n -m) -m|}(x) \quad \text{if} \ n \ge m. \label{eq.red1}
 \end{align}
 
  Let $\cC$ be the $\tZF$-subalgebra of $\tKF$ generated by $C_1,\dots, C_k$ and $\cC'$ be the subalgebra generated by $C_1,\dots, C_{k-1}$. Consider several cases. 
 
 (a) Suppose $n\neq 0 \pmod 4$. In this case $T_m(C)\in \ZF$ for any loop $C$ on $F$. Using  \eqref{eq.red1} and \eqref{eq.cen1}  we reduce to the case  $m_i < m$ for all $i$.
 By Corollary \ref{r.deg} the minimal polynomial of $C_k$ over $\cC'$ is $T_m -u$ for some $u\in \cC'$ and $u$ is transcendental over $\BQ$. Lemma \ref{r.trace0} with $k_0= \tZF, k_1=\cC', k_2= \cC, x_1= \prod_{i=1}^{k-1} T_{m_i}(C_i)$, and $x_2=C_k$ shows that $\tr(T(\al) )=0$.
 
 (b) Suppose $n= 0 \pmod 4$. 
 Since $T_{2m}(C_i) \in \ZF$,   using  \eqref{eq.red2} and \eqref{eq.cen1}  we reduce to the case $m_i < 2m$ for all $i$.

 (i)  Suppose $C_1, \dots, C_k$ are $\BZ_2$-linearly independent in $H_1(\bF,\BZ_2)$.   By Corollary \ref{r.deg} the minimal polynomial of $C_k$ over $\cC'$ is $T_{2m} -u$ for some $u\in \cC'$  transcendental over $\BQ$. Lemma \ref{r.trace0}
 with $k_0= \tZF, k_1=\cC', k_2= \cC, x_1= \prod_{i=1}^{k-1} T_{m_i}(C_i)$, and $x_2=C_k$ 
 shows that $\tr(T(\al) )=0$.

 (ii) Suppose $C_1, \dots, C_k$ are not $\BZ_2$-linearly independent in $H_1(\bF,\BZ_2)$.  There are indices $j_1,\dots, j_l$  such that $\sum_{i} C_{j_i} =0$ in $H_2(\bF,\BZ_2)$. This implies that $ \prod_{i} T_m(C_{j_i})\in \ZF$. If all $m_{j_i} \ge m$, then $2m > m_{j_i} \ge m$. Taking the product of $l$  Equations  \eqref{eq.red1} with $n= m_{ji}$ then using \eqref{eq.cen1},  we  reduce to the case when there is $i$ such that $m_{j_i} <m$. 
 
 Re-indexing, we assume that $j_i=k$. Thus $m_k < m$, and $C_k$, as an element of $H_1(\bF,\BZ_2)$, is in the $\BZ_2$-span  of $C_1, \dots, C_{k-1}$. 
 By Corollary \ref{r.deg} the minimal polynomial of $C_k$ over $\cC'$ is $T_{m} -u$ for some $u\in \cC'$ transcendental over $\BQ$. Again Lemma \ref{r.trace0}
 shows that $\tr(T(\al) )=0$.
\end{proof}

 \subsection{Trace  and $\dz$}

  The following is a consequence of Theorem  \ref{thm.trace}.

 \begin{prop}\label{zerores} If $x \in \KF^*$   and $\dz(x) =0$, 
    then $\tr(x)\neq 0$. \end{prop}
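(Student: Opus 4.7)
The plan is to use the Chebyshev expansion of $x$ and apply the trace termwise via Theorem \ref{thm.trace}. By Theorem \ref{center}(a), write
\[
x = \sum_{\al \in \cS} c_\al T(\al), \quad c_\al \in \BC,
\]
a finite sum. Each $T(\al)$ with $\al \in \Sz$ is central (Theorem \ref{center}(b)), so $\tZF$-linearity of $\tr$ and Theorem \ref{thm.trace} yield
\[
\tr(x) = \sum_{\al \in \Sz} c_\al T(\al).
\]
Since $\{T(\al) : \al \in \Sz\}$ is $\BC$-linearly independent (it is a basis of $\ZF$), it suffices to exhibit a single $\al_0 \in \Sz$ with $c_{\al_0} \neq 0$.

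The natural candidate is $\al_0 := \max_{\unlhd} \{\al : c_\al \neq 0\}$. The engine of the proof is an upper-triangularity statement (already used implicitly in the proof of Lemma \ref{r.l2}(b)): $\max_\unlhd \supp(T(\al)) = \al$, with leading coefficient $1$. Writing $\al = \prod_i C_i^{l_i}$ for pairwise non-isotopic non-trivial loops $C_i$, we expand $T(\al) = \prod_i T_{l_i}(C_i)$; since disjoint simple diagrams multiply in $\KF$ without skein corrections, the support consists of diagrams $\beta = \prod_i C_i^{k_i}$ with $0 \le k_i \le l_i$. Additivity of $\tnu$ on disjoint unions---immediate from Proposition \ref{r.prod} in the open case, and derivable in the closed case by applying $h^k_\Omega$ to \eqref{stabdt} and comparing slope and intercept---gives $\tnu(\al) - \tnu(\beta) = \sum_i (l_i - k_i) \tnu(C_i)$. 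Provided each $\tnu(C_i)$ is strictly positive in $\unlhd$ (treated below), a non-negative combination with at least one positive coefficient is strictly positive, forcing $\beta \triangleleft \al$. Consequently the change of basis is upper triangular with unit diagonal in $\unlhd$, so $\al_0$ also equals $\max_\unlhd \supp_\cS(x)$ with the same coefficient.

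By the definitions \eqref{eq.defdeg1} and \eqref{eq.defdeg2}, $\deg(x)$ equals $\nu(\al_0)$ or $\eta(\al_0)$ according as $F$ is open or closed. The hypothesis $\dz(x) = 0$ says $\deg(x) \in \bAz$, hence $\deg(\al_0) \in \bAz$, and Proposition \ref{primitive}(a) identifies this as $\al_0 \in \Sz$. Combined with $c_{\al_0} \neq 0$, this furnishes a non-zero central summand in $\tr(x)$, so $\tr(x) \neq 0$.

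The only delicate point is the strict positivity $\tnu(C) \triangleright 0$ for every non-trivial loop $C$ in the closed-surface case, where $\mu(C)$ and $\eta(C)$ may have signed entries and non-negative combinations of signed vectors need not remain positive. The resolution is geometric: a non-trivial $C$ cannot be disjoint from both $\cP$ and $\Omega$ (otherwise $C$ would sit in a disk component of $F \setminus (\cP \cup \cD)$), so either $|\mu(C)|_1 > 0$, or (when $C$ is disjoint from $\Omega$) $|\eta(C)|_1 = |\nu(C)|_1 > 0$. In either case the lex-first non-zero entry of $\tk(\tnu(C))$ sits in a positive, dominant position, giving the required positivity.
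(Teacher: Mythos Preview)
Your proof is correct and follows the same strategy the paper sketches (lead term in the open case, stable lead term in the closed case, then Proposition~\ref{primitive}(a) and Theorem~\ref{thm.trace}); you have simply supplied the details that the paper delegates to \cite{FK}. Two small comments: (i) the additivity of $\tnu$ on disjoint diagrams in the closed case is most cleanly obtained from Lemma~\ref{r.sum3} applied to $\al\beta\in\cS$, rather than by reproving additivity of DT coordinates; (ii) your final geometric step is slightly imprecise as stated, since $I(C,\cP)=0$ and $I(C,\Omega)=0$ do not a priori place a single representative of $C$ in a component of $F\setminus(\cP\cup\cD)$. The clean argument is: $I(C,\cP)=0$ forces $C\simeq P_i$ for some $i$, while $I(P_i,\Omega)>0$ because $h_\Omega(P_i)\neq P_i$ (this is exactly what underlies Proposition~\ref{r.aden}(e)), contradicting $I(C,\Omega)=0$.
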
  
    \begin{proof}
 This is proved for punctured surfaces and $n$ not divisible by $4$ in \cite{FK}, Lemma 3.8. Other cases are similar, with a stable lead term replacing the lead term in the argument.
 
    \end{proof}
  

 The following theorem extends the exhaustion criterion from \cite{FK}.

\begin{theorem}\label{exhaustion} Suppose $\mathcal{B}$ be a collection of nonzero elements in   a $\tZF$-subalgebra $A$ of $\tKF$.
\begin{itemize}

\item[(i)] If $\dz(\mathcal{B})= \dz(A^*)$ then $\mathcal{B}$  spans ${\tKF}$ over $\tZF$.

\item[(ii)] The dimension of $A$  over $\tZF$ is equal to  $|\dz(A)|$.

\end{itemize}

\end{theorem}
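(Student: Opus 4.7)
The plan is to prove part (ii) first as the main technical input and then derive (i) as a quick corollary. For (ii), the lower bound $\dim_\tZF A\ge|\dz(A^*)|$ is immediate: any choice of representatives $\{x_r\}_{r\in\dz(A^*)}\subset A^*$ with $\dz(x_r)=r$ is $\tZF$-linearly independent by Corollary \ref{r.dimlow}(a).

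For the upper bound I will extend these representatives to a full $\tZF$-basis of $\tKF$ and show that every $a\in A$ has vanishing components along the extension. Using surjectivity of $\dz:\tKF^*\to\Rz$ from \eqref{eq.Rd}, pick $x_r\in\tKF^*$ with $\dz(x_r)=r$ for each $r\in\Rz\setminus\dz(A^*)$. By Corollary \ref{r.dimlow}(a) the family $\{x_r\}_{r\in\Rz}$ is $\tZF$-linearly independent, and by Theorem \ref{thm.dim} it has cardinality $|\Rz|=\dim_\tZF\tKF$, so it is a $\tZF$-basis of $\tKF$. Given $a\in A$, expand $a=\sum_{r\in\Rz}z_r x_r$ and set $b:=a-\sum_{r\in\dz(A^*)}z_rx_r=\sum_{r\notin\dz(A^*)}z_rx_r$; the middle expression shows $b\in A$. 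If $b\ne 0$ then $\dz(b)\in\dz(A^*)$. But the nonzero terms $z_rx_r$ in the right-hand sum have $\dz(z_rx_r)=\dz(x_r)=r$ (since $\deg(z_r)\in\bAz$), with the $r$'s pairwise distinct and all in $\Rz\setminus\dz(A^*)$; hence their $\deg$-values in $\bA$ are pairwise distinct as well. The $\max_\unlhd\supp$ description of $\deg$ (from the proofs of Theorem \ref{r.gen1} in Section \ref{residue}, extended to $\tKF^*$ by clearing a common denominator in $\ZFs$) then forces $\deg(b)=\deg(z_{r_*}x_{r_*})$ for the single summand whose $\unlhd$-support-maximum dominates. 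Hence $\dz(b)=\dz(x_{r_*})\notin\dz(A^*)$, contradicting $b\in A^*$. So $b=0$, $a$ lies in the $\tZF$-span of $\{x_r\}_{r\in\dz(A^*)}$, and $\dim_\tZF A\le|\dz(A^*)|$.

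Part (i) is then an immediate consequence: by (ii), $\dim_\tZF A=|\dz(A^*)|=|\dz(\mathcal{B})|$, so selecting $b_r\in\mathcal{B}$ with $\dz(b_r)=r$ for each $r\in\dz(A^*)$ produces, by Corollary \ref{r.dimlow}(a), a $\tZF$-linearly independent subset of cardinality $\dim_\tZF A$. This is a $\tZF$-basis of $A$, so in particular $\mathcal{B}$ spans $A$. The literal statement that $\mathcal{B}$ spans $\tKF$ is the case $\dz(A^*)=\Rz$ (equivalently $A=\tKF$), in which case one may alternatively apply Corollary \ref{r.dim=} directly to $X=\tZF\text{-span}(\mathcal{B})$.

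The main obstacle is promoting the ``degree of a sum of distinct-degree terms is the degree of a single summand'' property from $\KFs$ — where it is transparent from the lead-term construction of $\deg$ via $\max_\unlhd\supp$ — to all of $\tKF^*$. This is handled by writing the summands over a common denominator in $\ZFs$ and invoking $\deg(\ZFs)\subset\bAz$, so that the passage to the fraction field leaves residues unchanged; the closed-surface case requires care with the refined lex-order $\unlhd$ on $\BZ^r\times\BZ^r$ coming from the stable DT coordinates $\tnu$ and with the fact that $\eta(\alpha)$ alone is not injective on $\cS$, so one must track the $\unlhd$-maximum of $\supp$ rather than the $\deg$-value directly.
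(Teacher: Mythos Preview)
Your proof is correct, but the route is genuinely different from the paper's. The paper proves (i) first, using the trace: since $A$ is a finite $\tZF$-subalgebra of the division algebra $\tKF$, it is itself a division algebra, so $\dz(A^*)$ is a subgroup of $\Rz$; then for any $x\in A^*$ one picks $b\in\mathcal B$ with $\dz(b)=-\dz(x)$, whence $\dz(xb)=0$ and Proposition~\ref{zerores} (a consequence of the trace formula, Theorem~\ref{thm.trace}) gives $\tr(xb)\neq 0$. Thus $\mathcal B$ exhausts the trace pairing on $A$ and hence spans $A$. Part (ii) is then immediate from (i) together with Corollary~\ref{r.dimlow}(a).

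Your argument reverses the logical order and replaces the trace input by Theorem~\ref{thm.dim} plus a finer use of the lead-term construction of $\deg$: you complete a system of $\dz$-representatives in $A$ to a full $\tZF$-basis of $\tKF$ (this is where $|\Rz|=\dim_\tZF\tKF$ is essential) and then show, via the $\unlhd$-lead-term description of $\deg$ after clearing denominators into $\KFs$, that any $a\in A$ has vanishing coordinates along the added basis vectors. This is sound; the property you need, namely that for $w_1,\dots,w_k\in\KFs$ with pairwise distinct $\deg$-values one has $\deg(\sum w_i)=\deg(w_j)$ for some $j$, is explicitly recorded in the open case and follows from Lemma~\ref{r.10} and the $\unlhd$-ordered-group structure in the closed case. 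The trade-off is that the paper's argument is shorter and stays at the level of the trace formalism just developed, while yours avoids the trace entirely but has to reopen the internals of Section~\ref{residue} and invoke the full dimension computation of Theorem~\ref{thm.dim}. You also correctly note that the literal ``spans $\tKF$'' in (i) should read ``spans $A$'', matching what the paper's own proof actually establishes.
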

\no{\blue{We don't need this theorem in the sequence. True, but it is nice on its own (:}}

\proof (i) 
Since $\dz(A^*)$ is a group, for every $x\in A^*$ there exists $b \in \mathcal{B}$ such that $\dz(xb) =\dz(x) + \dz(b)=0$. By Proposition \ref{zerores}, $\tr(xb)\neq 0$. Therefore the set $\mathcal{B}$ exhausts the bilinear form given by the trace on $A\otimes_\tZF A$. Consequently $\mathcal{B}$ spans $A$.

(ii) Choose a subset $\mathcal{B}$  of $A^*$ such that $\dz|_{\mathcal{B}}:\mathcal{B}\rightarrow \dz(A^*)$ is bijective. By (i), $\mathcal{B}$ spans $A$. By Proposition \ref{r.dimlow}(a), $\mathcal{B}$ is linearly independent over $\tZF$. Hence $\cB$ is a basis. \qed

\section{Pants subalgebra decomposition}\label{pantedec}

In this section we  give a splitting of the localized skein algebra as a module over its center. 
Throughout, $F=F_{g,p}$ is a finite type surface with negative Euler characteristic, with or without punctures,  $\zeta$ is a root of unity, and $m= \ord(\zeta^4)$.

\subsection{The Splitting}
Recall that a pants decomposition of $F$ is a collection of curves $\cP=\{ C_1,\dots, C_{3g-3+p}\}$  such that each component of the complement of $\cP$ in $F$ is a planar surfaces of Euler characteristic $-1$.

Given a pants decomposition $\cP$ of $F$, let $\Cz(\cP)$ be the  $\tZF$-subalgebra of $\tKF$  generated by  the curves in $\cP$. By Proposition \ref{r.com} and Theorem \ref{thm.dim}
$$ \dim_{\tZF} \Cz(\cP)=  \sqrt{\dim_\tZF \tKF}.
\no{=\begin{cases}
 m^{3g-3+p} \quad &\text{if } \ \ord(\zeta) \neq 0 \pmod 4\\
 2^g  m^{3g-3+p} \quad &\text{if } \ \ord(\zeta)= 0 \pmod 4,
\end{cases}}$$
 Hence $\Cz(\cP)$ is a maximal commutative subalgebra of the division algebra $\tKF$.
In \cite{FK} the first two authors constructed a splitting of $\tKF$, when $F$ has at least one puncture and $\ord(\zeta) \neq 0 \pmod 4$.  
 Here we prove  that this decomposition works for all surfaces and all roots of unity.

\begin{theorem}  \label{r.decomp}
Let $F$ be a finite type surface of negative Euler characteristic. 
There exist two pants decompositions $\cP$ and $\cQ$ of $F$ such that for any root of unity $\zeta$  the $\tZF$-linear map
 \be 
  \psi:\Cz(\cP) \otimes _{\tZF} \Cz(\cQ) \to \tKF, \quad \psi(x\otimes y) \to xy, 
  \label{eq.psi}
  \ee
 is a $\tZF$-linear isomorphism of vector spaces.
  \end{theorem}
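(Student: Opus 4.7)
The plan is to prove surjectivity of $\psi$ and use a dimension count for the rest. Both $\Cz(\cP)$ and $\Cz(\cQ)$ are maximal commutative subalgebras of the division algebra $\tKF$, so each has $\tZF$-dimension equal to $\sqrt{\dim_\tZF \tKF}$. For the source and target of $\psi$ to have the same $\tZF$-dimension when $\ord(\zeta) \equiv 0 \pmod 4$, the invariants $t_\cP$ and $t_\cQ$ of Proposition \ref{r.com}(b) must satisfy $t_\cP + t_\cQ = 2g$; in the remaining case the matching is automatic. Once source and target agree in dimension, surjectivity forces bijectivity.

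To arrange this I would take $\cP$ and $\cQ$ to form a Heegaard-type pair. Viewing $\bF$ as the boundary of a genus-$g$ handlebody, choose $\cP$ to be a pants decomposition containing the $g$ meridian curves $a_1,\dots,a_g$ that bound compressing disks, and $\cQ$ to be a pants decomposition containing $g$ dual longitude curves $b_1,\dots,b_g$; the remaining $2g-3+p$ connecting curves in each are placed transversely. The pairs $\{a_i,b_i\}$ form a symplectic $\BZ_2$-basis of $H_1(\bF;\BZ_2)$, giving $t_\cP = t_\cQ = g$ and the required dimension match. To show surjectivity of $\psi$ I would invoke Corollary \ref{r.dim=}: letting $X$ denote the image of $\psi$ (a $\tZF$-subspace of $\tKF$), it suffices to show $\dz(X^*) = \Rz$. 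Since $\dz: \tKF^* \to \Rz$ is a group homomorphism and $X$ contains every product $xy$ with $x \in \Cz(\cP)^*$, $y \in \Cz(\cQ)^*$, the image $\dz(X^*)$ contains the subgroup $H_\cP + H_\cQ$, where $H_\cP := \dz(\Cz(\cP)^*)$ and $H_\cQ := \dz(\Cz(\cQ)^*)$. By Theorem \ref{exhaustion}(ii), $|H_\cP|\cdot|H_\cQ| = \dim_\tZF \Cz(\cP) \cdot \dim_\tZF \Cz(\cQ) = |\Rz|$, so the required $H_\cP + H_\cQ = \Rz$ is equivalent to $H_\cP \cap H_\cQ = 0$.

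The main obstacle is then the combinatorial-topological verification that $H_\cP + H_\cQ = \Rz$ for the explicit Heegaard-type $\cP$ and $\cQ$. Using Proposition \ref{primitive} together with the description of $\Rz$ from Section \ref{prel} in the open case (respectively the stable coordinates $\eta$ of Section \ref{stabledt} and Proposition \ref{r.aden}(e) in the closed case), this reduces to checking that the edge- or stable-coordinates of the $6g-6+2p$ curves in $\cP\cup\cQ$ yield a system whose reductions modulo $m$ generate $(\bA/\bAd)/m(\bA/\bAd)$, and whose $\BZ_2$-homology classes span $H_1(\bF;\BZ_2)$ when $4\mid \ord(\zeta)$. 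The handlebody picture makes the homological claim immediate, while the mod-$m$ claim follows from arranging the connecting curves in transverse position so the $(6g-6+2p)\times(6g-6+2p)$ coordinate matrix of $\cP\cup\cQ$ is non-degenerate modulo $m$. Once $H_\cP + H_\cQ = \Rz$ is established, Corollary \ref{r.dim=} gives surjectivity of $\psi$, which upgrades to an isomorphism by the matched $\tZF$-dimensions.
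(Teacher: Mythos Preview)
Your high-level strategy is the same as the paper's: show the image $X$ of $\psi$ satisfies $\dz(X^*)=\Rz$, invoke Corollary~\ref{r.dim=} for surjectivity, and finish by the dimension count. Your reformulation $|H_\cP|\cdot|H_\cQ|=|\Rz|$, so that $H_\cP+H_\cQ=\Rz$ is equivalent to $H_\cP\cap H_\cQ=0$, is correct. (Incidentally, $t_\cP=t_\cQ=g$ holds for \emph{every} pants decomposition, since disjoint curves span a Lagrangian in $H_1(\bF;\BZ_2)$; you need not arrange this specially.)

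The genuine gap is that you do not prove the combinatorial core. You assert that with a ``Heegaard-type'' choice and ``connecting curves placed transversely'' the coordinate matrix of $\cP\cup\cQ$ is ``non-degenerate modulo $m$''. But $\zeta$ ranges over \emph{all} roots of unity, so this must hold for every $m$; equivalently the $6g-6+2p$ curves must generate $\bA/\bAd$ over $\BZ$, i.e., their coordinate matrix must have determinant $\pm1$ relative to a $\BZ$-basis of $\bA/\bAd$. That is an arithmetic statement, not a soft transversality statement, and your description of the ``remaining $2g-3+p$ connecting curves'' is far too vague to establish it. This is exactly the content of the paper's Lemma~\ref{r.p1}. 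For open surfaces the paper quotes an explicit determinant computation from \cite{FK}: the determinant of $\big(I(a_i,C_j)\big)$ in $\BZ^{6g-6+3p}$ equals $2^{4g-5+2p}$, which matches the index $[\BZ^{6g-6+3p}:\bA]$ from Lemma~\ref{r.index1}, forcing the span to be all of $\bA$. For closed surfaces the paper uses a quite specific construction---realize $F$ as the double of a planar surface $\Sigma$ with $g+1$ boundary components, take $\cP$ to be the doubles of the edges of an ideal triangulation of $\mathring\Sigma$, and take $\cQ$ to be $\partial\Sigma$ together with the \cite{FK} pants systems on each half---and then matches indices via Lemma~\ref{r.index2}. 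Your Heegaard picture disposes of the $H_1(\bF;\BZ_2)$ part cleanly, but the integral spanning statement, which is the heart of the proof, is left as an unsupported claim.
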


\def\bND{ N'(\cD)}
\def\oND{\mathring N(\cD)}
\def\Cd{\BC[\partial]}
\def\CP{\BC[\cP]}
\def\CQ{\BC[\cQ]}
\subsection{Pants decompositions} Let $\Cd$ be the $\BC$-subalgebra of $\KF$ generated by peripheral loops. For a pants decomposition $\cP$ let $\BC[\cP]$ be the $\BC$-subalgebra of $\KF$ generated by loops in $\cP$. For a set $U\subset \BZ^r$ let $\bU$ be the $\BZ$-span of $U$.

\begin{lemma} \label{r.p1} Suppose $F_{g,p}$ is a finite type surface with negative Euler characteristic. There exist a coordinate datum for $F_{g,p}$ and two pants
decompositions $\cP$ and $\cQ$ such that 
\be \bA = \overline{\deg(\Cd^*)} + \overline{ \deg(\BC[\cP]^*)} + \overline{\deg (\BC[\cQ]^*}).
\label{eq.full6}
\ee
It is understood that if $p=0$ then $\Cd$ is the 0 vector space.

\end{lemma}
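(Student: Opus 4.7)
The identity is a purely topological claim about subgroups of $\bA$ and the degree map $\deg$, which depend on the coordinate datum but not on $\zeta$. The plan is to first reduce the statement to a basis problem, then construct $\cP$ and $\cQ$ explicitly, with the closed and open cases handled separately.

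\emph{Reduction to a basis.} Because $\deg \colon \KFs \to \bA$ is a monoid homomorphism (Theorem~\ref{r.gen1}), and each of $\Cd,\BC[\cP],\BC[\cQ]$ is generated as a $\BC$-algebra by loops, each summand on the right is the $\BZ$-span of the degrees of the generating loops:
\[
\overline{\deg(\Cd^*)} = \textstyle\sum_{s=1}^p \BZ\cdot \deg(\partial_s),\qquad
\overline{\deg(\BC[\cP]^*)} = \textstyle\sum_i \BZ\cdot \deg(P_i),
\]
and similarly for $\cQ$. The total count of generators is $p + 2(3g-3+p) = 6g-6+3p = \rk(\bA)$ in the open case, and $2(3g-3) = 6g-6 = \rk(\bA)$ in the closed case. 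Thus the assertion is equivalent to: the collection $\{\deg(\partial_s),\deg(P_i),\deg(Q_j)\}$ forms a $\BZ$-basis of $\bA$.

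\emph{Closed case.} Take $\cP$ to be the pants decomposition from the coordinate datum $(\cP,\cD)$. Proposition~\ref{r.aden}(e) gives $\deg(P_j) = \eta(P_j) = (\vec 0,-\bode_j)$, so the $\deg(P_j)$'s form a $\BZ$-basis of the twist direct summand $\{\vec 0\}\oplus \BZ^{3g-3}$ in the splitting $\bA = p_1(\bA)\oplus \BZ^{3g-3}$ from Lemma~\ref{r.index2}. It remains to choose a pants decomposition $\cQ = (Q_1,\dots,Q_{3g-3})$ whose degree projections $p_1(\deg(Q_j))$ form a $\BZ$-basis of $p_1(\bA)$. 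Proposition~\ref{r.aden}(c)--(d), together with the fact established in the proof of Theorem~\ref{thm.stab} that each component of $\Omega$ is isotopic to some $P_s$ (so Dehn twists along $\Omega$ preserve pants-intersection numbers), shows $p_1(\deg(Q_j)) = (I(Q_j,P_i))_{i=1}^{3g-3}$. We then construct $\cQ$ handle-by-handle: starting from the base case $g=2$, where $\cQ$ is taken as three curves dual to $\cP$ (two meridians of the handles together with a separating transverse curve), we add one handle at a time, introducing a new $Q_j$ crossing the corresponding new $P_j$ minimally and adjusting to match the parity constraints of $p_1(\bA)$.

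\emph{Open case.} Fix a triangulation as coordinate datum. The peripherals have degrees $\nu(\partial_s)$ forming a basis of the direct summand $\bAd \subset \bA$ (Proposition~\ref{quotmon}(a)); it remains to choose $\cP, \cQ$ so that the images of $\{\deg(P_i),\deg(Q_j)\}$ in $\bA/\bAd$, which is free of rank $6g-6+2p$, form a basis. Since the condition is independent of $\zeta$, the pair used in \cite{FK} for the case $\ord(\zeta)\not\equiv 0 \pmod 4$ suffices without modification.

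\emph{Main obstacle.} The principal technical point is the construction of $\cQ$ in the closed case: ensuring simultaneously that $\cQ$ is a valid pants decomposition (disjoint, pairwise non-isotopic, essential), that the intersection matrix $(I(Q_j,P_i))$ has full rank, and that its columns generate $p_1(\bA)$ exactly --- not merely a finite-index subgroup. Rank is easily arranged, but matching the parity lattice $p_1(\bA)$ requires a careful choice where certain $Q_j$ cross a common $P_i$ twice (supplying the ``even'' generators) while others cross minimally; the genus induction keeps track of these constraints as each new handle is added.
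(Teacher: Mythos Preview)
Your reduction to a $\BZ$-basis problem and your treatment of the open case are essentially what the paper does: it too cites the explicit pairs $\cP,\cQ$ from \cite{FK} and then matches indices via Lemma~\ref{r.index1}.

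The closed case, however, has a genuine gap. The assertion that ``each component of $\Omega$ is isotopic to some $P_s$'' is false: $\Omega = \partial N(\cD)$ has $g+1$ components, and each component $o_j$ meets several of the $P_i$ transversely (indeed $\nu_i(o_j)=I(o_j,P_i)$ appears nontrivially in \eqref{eq.mu}). Consequently Dehn twists along $\Omega$ do \emph{not} preserve the pants-intersection numbers $I(\,\cdot\,,P_i)$, and your conclusion $p_1(\deg(Q_j)) = (I(Q_j,P_i))_i$ does not follow for an arbitrary $Q_j$: for $i\le 3g-3$ one has $\eta_i(\al)=I(h^k_\Omega(\al),P_i)-k\mu_i(\al)$, which need not equal $I(\al,P_i)$.

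The paper's remedy is to sidestep this by choosing every $Q_j$ \emph{disjoint from} $\Omega$; then Proposition~\ref{r.aden}(c) gives $\eta(Q_j)=\nu(Q_j)$ directly, and the first $3g-3$ coordinates really are $(I(Q_j,P_i))_i$. Concretely, the paper realizes $F$ as the double of a planar surface $\Sigma$ with $g+1$ boundary components (so $\Sigma=N(\cD)$ and $\Omega=\partial\Sigma$), takes the $P_i$ to be doubles of the arcs of an ideal triangulation of $\mathring\Sigma$, and takes $\cQ$ to consist of the components of $\Omega$ together with pants curves lying entirely on one side of $\Omega$ --- all of which have zero intersection with $\Omega$. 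The index of $\overline{\eta(\cP)}+\overline{\eta(\cQ)}$ in $\BZ^{6g-6}$ then reduces to the determinant \eqref{eq.index} for the open surface $\mathring\Sigma$, which is $2^{2g-3}$ and matches $[\BZ^{6g-6}:\bA]$ by Lemma~\ref{r.index2}. Your proposed handle-by-handle induction, even if the projection formula held, is not carried out in enough detail to verify that the parity lattice $p_1(\bA)$ is generated exactly rather than up to finite index.
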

 
\begin{proof} 
To prove the lemma we use the following result that follows from the computation of the determinants in the proof of Theorem 4.5 in \cite[Section 4]{FK}:
\begin{lemma}\label{r.full1}
Suppose $F_{g,p}$ has negative Euler characteristic and $p>0$. There exist a coordinate datum $(a_i)_{i=1}^{6g-6+3p}$ and two pants decompositions $\cP, \cQ$ such that the $\BZ$-span of $\nu(\cP), \nu(\cQ)$ and $\Ad$ 
has index $2^{4g-5+2p}$ in $\BZ^{6g-6+3p}$. In other words,  if $C_1, \dots, C_{6g-6+3p}$ it the set of curves consisting of  components of $\cP, \cQ$ and the $p$ peripheral loops, then
\be 
\det\left[  I(a_i, C_j)_{i,j=1}^{6g-6+3p}   \right] = 2^{4g-5+2p}.
\label{eq.index}\qed
\ee
\end{lemma}
\no{\blue{We need to add a proof for the case when $p>1$. And do not use color in Figure as it is very costly}.}

Pants decompositions for a  surface of genus $3$ and $1$ puncture are shown in Figure \ref{PandQ}. Additional curves needed for more than one puncture are shown in Figure \ref{punct} for the case of four punctures. For a detailed description of those curves see \cite{FK}.
\begin{figure}
\begin{center}\scalebox{0.5}{\includegraphics{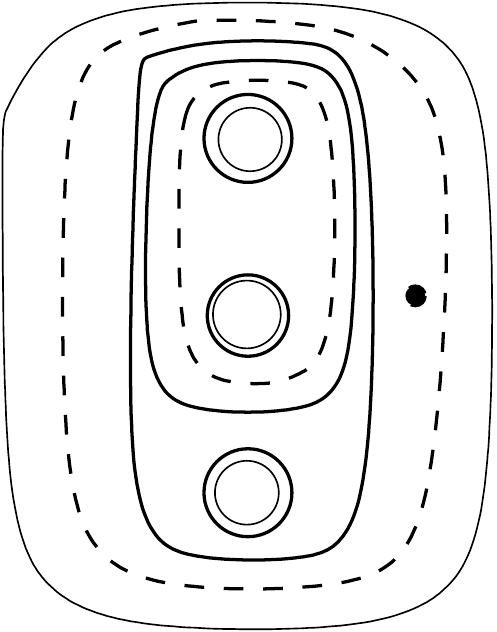}}\hspace{0.1in}\scalebox{0.5}{\includegraphics{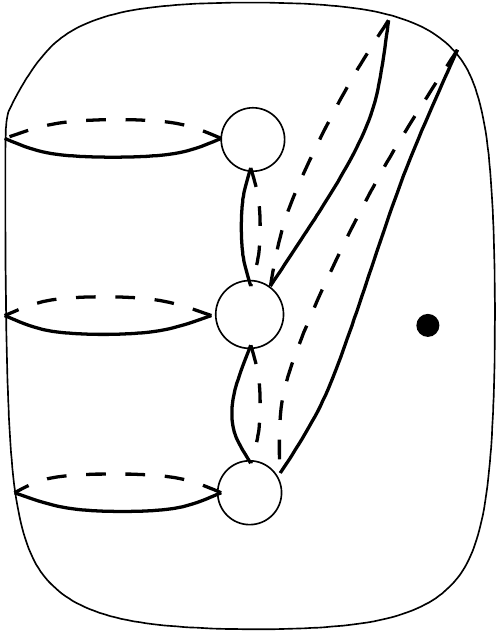}}\end{center}
\caption{$P$  and $Q$ for a surface of genus $3$ and $1$ puncture}\label{PandQ}
\end{figure}

\begin{figure}
\begin{center}\scalebox{0.5}{\includegraphics{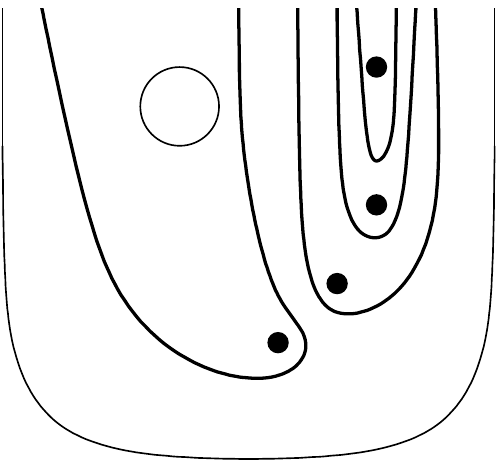}}\hspace{0.1in}\scalebox{0.5}{\includegraphics{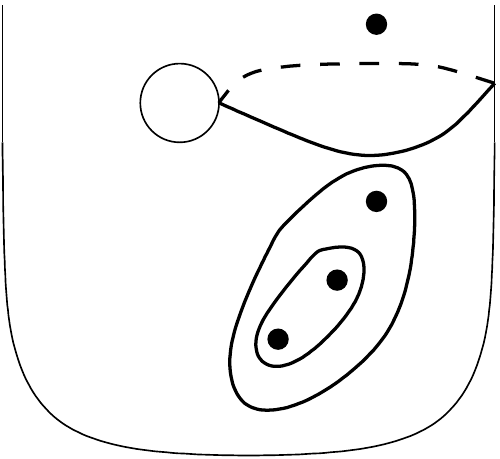}}\end{center}
\caption{Additional curves in families $P$  and $Q$ for $4$ punctures}\label{punct}
\end{figure}
By Lemma \ref{r.index1}, the index of $\bA$ in $\BZ^{6g-6+3p}$ is also $2^{4g-5+2p}$. Hence $\bA$ is equal to the  $\BZ$-span of $\nu(\cP), \nu(\cQ)$ and $\Ad$.\\
To prove Lemma \ref{r.p1} consider  cases when $p>0$ and $p=0$.

(a) Suppose $p>0$. Use the ideal triangulation and the two pants decompositions of Lemma \ref{r.full1}.
When  $p>0$, by the definition in Equation \eqref{eq.defdeg1}, one has $\deg(\al) =\nu(\al)$ for all $\al\in \cS$. Hence
$$ \overline {\deg(\Cd^*)} = \bA, \   \overline { \deg(\BC[\cP]^*)}=  \overline { \nu(\cP)}  , \     \overline {\deg(\BC[\cQ]^*)}=  \overline { \nu(\cQ) }.$$
Thus the right hand side of \eqref{eq.full6} is the $\BZ$-span of $\Ad, \nu(\cP), \nu(\cQ)$, which by Lemma \ref{r.full1} is equal to $\bA$.

(b) Suppose $p=0$. Let $\Sigma$ be a compact planar surface with $g+1$ boundary components, then $\oS= \Sigma\setminus \partial \Sigma$ is a finite type surface of type $F_{0,g}$. Let $(a_i)_{i=1}^{3g-3}$, $\cQ_1$, $\cQ_2$ be respectively the ideal triangulation, the pants decompositions $\cP$ and $\cQ$ of Lemma \ref{r.full1} for the surface $\oS$. Let $\cD\subset \oS$ be the trivalent graph dual to the system $(a_i)_{i=1}^{3g-3}$. We can assume that the topological closure $\bar a_i$ of $a_i$ in $\Sigma$ is a proper embedding of $[0,1]$ into $\Sigma$ and that the $6g-6$ endpoints of all $3g-3$ arcs $\bar a_i$ are distinct. Take another copy $\Sigma'$ of $\Sigma$ and assume that $\varphi: \Sigma \to \Sigma'$ is a diffeomorphism. Let $F$ be the result of gluing $\Sigma$ with $\Sigma'$ along the boundary by the identification $x \equiv \varphi(x)$ for every $x\in \partial \Sigma$. Let $\cP= (P_1,\dots, P_{3g-3})$ where $P_i= \bar a_i \cup \varphi(\bar a_i)$ and $\cQ=(Q_1,\dots, Q_{3g-3})$ be the collection of components of $\cQ_1$, $\partial \Sigma$, and $\varphi(\cQ_2)$, in some order. We claim that the coordinate datum $(\cP, \cD)$ and the two pants decompositions $\cP, \cQ$ satisfy \eqref{eq.full6}. Note that we can take $\Sigma=N(\cD)$, and $\Omega = \partial \Sigma$.

The surface $\oS$ has genus $g'=0$ and puncture number $p'=g+1$. 
It follows from  \eqref{eq.index} that
 \be \label{eq.p9}
 \det\left[  I(P_i, Q_j)_{i,j=1}^{3g-3}   \right] = 2^{4g' -5 + 2p'}= 2^{2g-3}. 
 \ee
 Since $F$ is closed, $\deg(\al)= \eta(\al)$ for $\al\in \cS$ by Definition \eqref{eq.defdeg2}. Hence  
 \be
\overline  {\deg(\BC[\cP]^*)} =  \overline{ \eta(\cP  )}, \quad \overline  {\deg(\BC[\cQ]^*)} =  \overline{ \eta(\cQ)  }.
 \ee
 Thus, to prove the lemma we need to show that 
 \be \overline{ \eta(\cP  )} + \overline{ \eta(\cQ  )}= \bA.
 \label{eq.full4}
 \ee
 Since the left hand side is a subgroup of the right hand side, we only need to show that they have the same index in $\BZ^{6g-6}$.
 
 Let $\vec 0\in \BZ^{3g-3}$ be the 0 vector and $\bode_i \in \BZ^{3g-3}$ is the vector whose entries are all 0 except for the $i$-th one which is 1. By Proposition \ref{r.aden}(e), we have $\eta(P_i)= (\vec 0, -\bode_i)$. Hence $
 \overline{ \eta(\cP  )} = \{\vec 0\} \oplus \BZ^{3g-3}
$. 
 It follows that index of  $ \overline{ \eta(\cQ  )} + \overline{ \eta(\cP  )}$ in $\BZ^{6g-6}$ is equal to the index of  $\left[\overline{ \eta(\cQ  )}\right]_1$ in $\BZ^{3g-3}$, where $\left[\overline{ \eta(\cQ  )}\right]_1 $ is the $\BZ$-span of $(3g-3)$-tuples which are  the first $3g-3$ coordinates of $\eta(Q_i)$, $i=1, \dots, 3g-3$.
 Since each $Q_i$ has 0 intersection with $\Omega$, 
 by Proposition \ref{r.aden} one has $\eta(Q_i)= \nu(Q_i)$. The first $3g-3$ coordinates of $\nu(Q_i)$ are given by $I(Q_i, P_j) _{j=1}^{3g-3}$. Hence the index of 
$\left[\overline{ \eta(\cQ  )}\right]_1$ is $\BZ^{3g-3}$ in $\det\left[  I(P_i, Q_j)_{i,j=1}^{3g-3}  \right]
$, which is equal to $2^{2g-3}$ by \eqref{eq.p9}.
 
Thus the index of $ \overline{ \eta(\cP  )} + \overline{ \eta(\cQ  )}$ in $\BZ^{6g-6}$ is $2^{2g-3}$, equal to  the index of $\bA$ in $\BZ^{6g-6}$, by Lemma \ref{r.index2}. Hence we have \eqref{eq.full4}, which proves the lemma.
\end{proof}

\subsection{Proof of Theorem \ref{r.decomp}} Let $X$ be the image of $\psi$ defined in \eqref{eq.psi} and $X^*= X \setminus \{0\}$. Choose the coordinate datum and the pants decompositions $\cP$, $\cQ$ as in Lemma \ref{r.p1}. Let 
$$ \cB: = \deg(\Cd^*) + \deg(\BC[\cP]^*) + \deg (\BC[\cQ]^*).$$
Note that $\cB$ is a submonoid of $\bA$, and Lemma \ref{r.p1} implies that $\overline {\cB}= \bA$. By Lemma \ref{r.surj} the natural map $\phi: \cB \to \overline {\cB}/\bAz = \bA /\bAz=\Rz$ is surjective.

Let $Y= \{ y_1 y_2 y_3 \mid y_1 \in \Cd^*, y_2 \in \CP^*, y_3 \in \CQ^*\}$, then
$ \deg(Y)=\cB.$
Since $Y \subset X$, we have the following commutative diagram
$$
\begin{tikzcd}
Y  \arrow[twoheadrightarrow,"\deg"]{r}  \arrow[hookrightarrow] {d}& \cB  \arrow[twoheadrightarrow, "\phi"]{d} \\
X  \arrow[rightarrow,"\dz"]{r} & \Rz
\end{tikzcd}
$$
Since  $\deg$ and $\phi$ are surjective, $\dz$ is also surjective. This means $\dz(X)= \Rz$. By Corollary \ref{r.dim=} we have $X= \tKF$. Thus $\psi$ is surjective. Since the dimension over $\tZF$ of the domain and the codomain of $\psi$ are the same,  $\psi$ is a $\tZF$-linear isomorphism.\qed

\no{\subsection{General division algebra}
We also have a more general splitting conjecture
\begin{conjecture}  Let $D$ be a division algebra which is finite dimensional over its center $Z$. Suppose $C_1 \cap C_2=Z$, where $C_1$ and $C_2$ are maximal commutative algebras of $D$, then the map given by sending $c_1\otimes c_2$ to $c_1c_2$
  \begin{equation} C_1\otimes_{Z}C_2\rightarrow D \end{equation}
  is an isomorphism of $Z$-modules. \end{conjecture}
 
 This is equivalent to the statement that if $C_1$ and $C_2$ are maximal commutative subalgebras of $D$ that intersect in $Z$ then $C_1C_2$ is a subalgebra of $D$.
\blue{Do we need this? If this true, it is probably known. If not, it is  very difficult. It is not likely  that we can make a good conjecture in the old theory of division algebras.}}

\no{\appendix
\section{Proof of Lemma \ref{r.plDT}}
\begin{proof}
Let $P_{i+ 3g-3} = K_i'$ in \cite{FLP}, $ P_{i+6g-6}= K''_i$ in \cite{FLP}, and  our $P_i$ be $K_i$  in \cite{FLP}. The intersection number of $\al$ with $P_i$ is a rationally piecewise linear function of the DT $9g-9$ coordinates of \cite{FLP}, which are rationally piecewise linear functions of our DT coordinates.
\blue {Write up the details. the Lemma is known and is written is an unpublished note of D. Thurston.}
\end{proof}}


\begin{thebibliography}{0000}
\bibitem{AF1}   Abdiel, Nelson; Frohman, Charles, {\em Frobenius algebras derived from the Kauffman bracket skein algebra}, J. Knot Theory Ramifications {\bf } 25 (2016), no. 4, 1650016, 25 pp. 
\bibitem{AF} Abdiel, Nelson; Frohman, Charles, {\em The Localized Skein Algebra is Frobenius}, Algebr. Geom. Topol. {\bf 17} (2017), no. 6, 3341-3373. 	
\bibitem{BW2} Bonahon, Francis; Wong, Helen, {\em Representations of the Kauffman bracket skein algebra I: invariants and miraculous cancellations}, Invent. Math. {\bf 204} (2016), no. 1, 195-243. 
\bibitem{BW4} Bonahon, Francis; Wong, Helen {\em Representations of the Kauffman bracket skein algebra II: Punctured surfaces} Algebr. Geom. Topol. {\bf 17} (2017), no. 6, 3399-3434. 
  \bibitem{Cohn} Cohn, P. M., {\em  Algebra} Vol. 3. Second edition. John Wiley \& Sons, Ltd., Chichester, 1991. xii+474 pp. ISBN: 0-471-92840-2.
\bibitem{CM}  Charles, Laurent; March\'{e}, Julien, {\em  Multicurves and regular functions on the representation variety of a surface in $SU(2)$}, Comment. Math. Helv. {\bf 87} (2012), no. 2, 409-431.
\bibitem{FG} 	Frohman, Charles; Gelca, Razvan, {\em Skein modules and the noncommutative torus}, Trans. Amer. Math. Soc. {\bf 352 }(2000), no. 10, 4877-4888.
\bibitem{FK}  Frohman, Charles; Kania-Bartoszynska, Joanna,  {\em The structure of the Kauffman bracket skein algebra at roots of unity}, Math. Z. {\bf 289} (2018), no. 3-4, 889-920. 
\bibitem{FKL} Frohman, Charles; Kania-Bartoszynska, Joanna;  L\^{e}, Thang, {\em Unicity for Representations of the Kauffman Bracket Skein Algebra},  To appear, Inventiones Mathematicae, https://doi.org/10.1007/s00222-018-0833-x 
\bibitem{FLP} Fathi, Albert; Laudenbach, François; Poénaru, Valentin, {\em Thurston's work on surfaces}.
 Mathematical Notes, {\bf 48},  Princeton University Press,  2012.
  \bibitem{FM}  Farb, Benson; Margalit, Dan, {\em A primer on mapping class groups}, Princeton Mathematical Series, {\bf 49} Princeton University Press, Princeton, NJ, 2012. xiv+472 pp. ISBN: 978-0-691-14794-9.
\bibitem{lam} Lam, T. Y., {\em  A first course in noncommutative rings}, Second edition. Graduate Texts in Mathematics, {\bf 131} Springer-Verlag, New York, 2001. xx+385 pp. ISBN: 0-387-95183-0.
\bibitem{Iv}  Ivanov, Nikolai V., {\em Subgroups of Teichm\"{u}ller modular groups}, Translated from the Russian by E. J. F. Primrose and revised by the author. Translations of Mathematical Monographs, {\bf 115} American Mathematical Society, Providence, RI, 1992. xii+127 pp. ISBN: 0-8218-4594-2.
 
 \bibitem{Le0}   L\^{e}, Thang, {\em On Kauffman Bracket Skein Modules at Root of Unity}, Algebraic \& Geometric Topology,  {\bf 15} (2015), no. 2, 1093--1117.
  \bibitem{Le}  L\^{e}, Thang, {\em On positivity of Kauffman bracket skein algebras of surfaces}, International Mathematics Research Notices, 2018, no. 5, 1314--1328. 
    \bibitem{Le1} Le, Thang, {\em Triangular decomposition of skein algebras}, 	Quantum Topology, {\bf 9} (2018), 591--632.
   \bibitem {Le_Paprocki}  L\^{e}, Thang;  Paprocki, Jonathan, {\em On Kauffman bracket skein modules of marked 3-manifolds and the Chebyshev-Frobenius homomorphism},  	arXiv:1804.09303 [math.GT]
\bibitem{Luo1} 	Luo, Feng, {\em Some applications of a multiplicative structure on simple loops in surfaces. Knots, braids, and mapping class groups - papers dedicated to Joan S. Birman (New York, 1998)}, 123--129, AMS/IP Stud. Adv. Math., 24, Amer. Math. Soc., Providence, RI, 2001.
\bibitem{Luo2} 	Luo, Feng, {\em  Simple loops on surfaces and their intersection numbers}, J. Differential Geom. {\bf 85} (2010), no. 1, 73-115.
  \bibitem{LS}  Luo, Feng, Stong, Richard, {\em Dehn-Thurston Coordinates for Curves on Surfaces}, Comm. Anal. Geom., {\bf 12} (2004), no. 1, 1-41.
  \bibitem{M}  Muller, Greg, {\em Skein and cluster algebras of marked surfaces}, Quantum Topol. {\bf 7 }(2016), no. 3, 435-503.
  \bibitem{Mc}    McCarthy, Paul J., {\em Algebraic extensions of fields.}, Corrected reprint of the second edition. Dover Publications, Inc., New York, 1991. x+166 pp. ISBN: 0-486-66651-4

\bibitem{Penner} Penner, Robert, {\em The action of the mapping class group on curves in surfaces}, Enseign. Math. (2) {\bf 30} (1984), no. 1-2, 39-55.
\bibitem{PS} 	Przytycki, J\'{o}zef H.; Sikora, Adam S., {\em On skein algebras and $Sl_2(C)$-character varieties}, Topology {\bf 39} (2000), no. 1, 115-148.
\bibitem{PS1} 	Przytycki, J\'{o}zef H.; Sikora, Adam S., {\em Skein Algebras of Surfaces}, Trans. Amer. Math. Soc. {\bf 371} (2019), no. 2, 1309-1332.
 \bibitem{SW} 	Sikora, Adam S.; Westbury, Bruce W., {\em  Confluence theory for graphs}, Algebr. Geom. Topol. {\bf 7}7 (2007), 439-478.
 
 \bibitem{Th} 	Thurston, Dylan Paul, {\em  Positive basis for surface skein algebras}, Proc. Natl. Acad. Sci. USA {\bf 111} (2014), no. 27, 9725-9732.
  \bibitem{Thurston} 	Thurston, Dylan Paul, {\em Geometric intersection of curves on surfaces}, preprint, available at \url{http://pages.iu.edu/~dpthurst/DehnCoordinates.pdf}.
 \bibitem{Turaev}  V. Turaev,
{\em Conway and Kauffman modules of a solid torus},  Zap. Nauchn. Sem. Leningrad. Otdel. Mat. Inst. Steklov. (LOMI) 167 (1988), Issled. Topol. 6, 79--89, 190; translation in
J. Soviet Math. 52 (1990), no. 1, 2799--2805.
 \end{thebibliography}
 \end{document}